\numberwithin{figure}{section}
\numberwithin{table}{section}
\newtheorem{theorem}{Theorem}[section]
\newtheorem*{thrm}{Main Theorem}
\newtheorem{lemma}[theorem]{Lemma}
\newtheorem{prop}[theorem]{Proposition}
\theoremstyle{definition}
\newtheorem{definition}[theorem]{Definition}
\newtheorem{cor}[theorem]{Corollary}
\theoremstyle{remark}
\newtheorem{remark}[theorem]{Remark}
\numberwithin{equation}{section}
\newfont{\tap}{tap scaled 650}
\def \S{{\mathbb S}}
\def \[{[ }
\def \]{] }
\def\S{{\Sigma}}
\definecolor{dgreen}{rgb}{0,0.5,0}        
\definecolor{dred}{rgb}{0.5,0,0}
\begin{document}

\title[Surfaces, quiver mutations, and quotients of Coxeter groups]{Punctured surfaces, quiver mutations,\\ and quotients of Coxeter groups}
\author{Anna Felikson}
\address{Department of Mathematical Sciences, Durham University, Upper Mountjoy Campus, Stockton Road, Durham, DH1 3LE, UK}
\email{anna.felikson@durham.ac.uk} 
\thanks{Research was supported in part by the Leverhulme Trust research grant RPG-2019-153 (PT) and by the National Science Foundation grant DMS-2100791 (MS)}

\author{Michael Shapiro}
\address{Department of Mathematics, Michigan State University, East Lansing, MI 48824, USA}
\email{mshapiro@math.msu.edu}

\author{Pavel Tumarkin}
\address{Department of Mathematical Sciences, Durham University, Upper Mountjoy Campus, Stockton Road, Durham, DH1 3LE, UK}
\email{pavel.tumarkin@durham.ac.uk}

\begin{abstract}
In~\cite{BM}, Barot and Marsh provided an explicit construction of presentation of a finite Weyl
group $W$ by any quiver mutation-equivalent to an orientation of a Dynkin diagram with Weyl group $W$. The construction was extended in~\cite{affine} to obtain presentations for all affine Coxeter groups, as well as to construct groups from triangulations of unpunctured surfaces and orbifolds, where the groups are invariant under change of triangulation and thus are presented as quotients of numerous distinct Coxeter groups. We extend the construction to include most punctured surfaces and orbifolds, providing a new invariant for almost all marked surfaces.
  \end{abstract}

\maketitle
{\small
\setcounter{tocdepth}{1}
\tableofcontents
}

\section{Introduction}
\label{intro}

Mutations of quivers were introduced by Fomin and Zelevinsky~\cite{FZ2} in context of cluster algebras.
In~\cite{FZ2}, Fomin and Zelevinsky provide a classification of cluster algebras of finite type: those cluster algebras correspond to Dynkin diagrams. In particular, skew-symmetric cluster algebras of finite type are defined by mutation classes of quivers obtained as orientations of simply-laced Dynkin diagrams.

In~\cite{BM}, Barot and Marsh provided an explicit construction of presentation of a finite Weyl
group $W$ for any quiver mutation-equivalent to an orientation of a Dynkin diagram with Weyl group $W$. In their construction, the Weyl group is presented as a quotient of a (usually infinite) Coxeter group by a normal closure of additional ({\em cycle}) relations. The result has already found some unexpected applications. It was shown in~\cite{manifolds} that the normal closure of the cycle relations is torsion-free, which was used to construct geometric manifolds with the original Weyl group acting by isometries.

The results of~\cite{BM} were extended in~\cite{affine} to obtain presentations for all affine Weyl groups, where some new types of additional relations are required. Most quivers of finite and affine type can be realized as quivers of triangulations of marked bordered surfaces. In the same paper, the construction was extended further to provide a group for every triangulation of an unpunctured marked bordered surface, where this group is invariant under change of triangulation and thus is presented as a quotient of numerous distinct Coxeter groups. 

 A construction similar to the one in~\cite{BM} to obtain presentations of Artin braid groups for any quiver of finite type (with potential) was provided by Grant and Marsh in~\cite{GM} (see also~\cite{Q1}). These results were generalized by Qiu and Zhou in~\cite{QZ}, where a finite presentation for a braid group for any unpunctured surface was constructed (and it was shown that this group coincides with the braid twist group). Taking the quotient of this braid group by setting all generators to be involutions recovers the group constructed in~\cite{affine}.   

 The results of~\cite{QZ} were generalized to punctured surfaces with boundary by Qiu~\cite{Q2}. More precisely, a finite presentation of an invariantly defined mixed twist group was given in any triangulation with all punctures contained in self-folded triangles.

\medskip

The aim of the present paper is to extend the construction from~\cite{affine} to the case of quivers arising from punctured triangulated surfaces.

More precisely, we introduce a group for every triangulation of a marked surface not containing loops, and show that the group does not depend on the triangulation. The group is a quotient of a certain Coxeter group, and, when restricted to triangulations of unpunctured surfaces without loops, coincides with the one defined in~\cite{affine} (see also~\cite{manifolds,EAWG}). The construction allows us to extend the results of~\cite{EAWG} to punctured surfaces by establishing connections with extended affine Weyl groups, this will appear in a forthcoming paper. 

By technical reasons, we restrict ourselves to punctured surfaces with at least $4$ features, where a feature is either a puncture or a boundary component, see Section~\ref{few} for the discussion of the remaining surfaces. 

The main result of the paper can be formulated as follows.
   \begin{thrm}[Theorem~\ref{invariance-th}]
Let $S$ be a marked surface with at least $4$ features, $T$ is a triangulation of $S$ not containing loops, and $Q$ is the corresponding quiver. Then the group $G(Q)$ defined in Section~\ref{construction} is an invariant of $S$, i.e. it does not depend on the triangulation. 

\end{thrm}

For surfaces with boundary, the group $G=G(Q)$ turns out to be a finite index subgroup of the quotient of the mixed twist group from~\cite{Q2}. To obtain $G$, one needs to consider a triangulation of a particular type (see Section~\ref{quotient}) and impose the relations on the mixed twist group by setting all the generators (in the presentation given in~\cite{Q2}) to be involutions, and then take a subgroup of index $2^p$, where $p$ is the number of punctures.  We note that imposing additional relations allows us to give presentation of our group in more general settings, see Section~\ref{quotient} for the details.

To prove the Main Theorem, we first show that the group $G(Q)$ is invariant under flips not creating loops (Lemma~\ref{flip_invariance}), and then show that for any two triangulations without loops there exists  a sequence of flips not creating any new loops and taking one of the triangulations to a triangulation combinatorially equivalent to the other (Proposition~\ref{trans}).  

\medskip

The paper is organized as follows. In Section~\ref{prelim} we remind necessary information about mutations of quivers and triangulations of marked bordered surfaces. In Section~\ref{group}, we construct the group $G(Q)$, prove its invariance under loop-free flips, and formulate Proposition~\ref{trans} which implies the main result of the paper. Section~\ref{closed} is devoted to the proof of Proposition~\ref{trans} for closed surfaces. In Section~\ref{bordered}, we adjust the arguments from the previous section to prove Proposition~\ref{trans} for bordered surfaces. Finally, Section~\ref{sf} contains further results, in particular, generalization of the Main Theorem to triangulated orbifolds; we also discuss properties of obtained groups.

\subsection*{Acknowledgements}
We thank the anonymous referee for pointing out the connections to results of~\cite{Q1,QZ,Q2}. A.F. and P.T. are grateful to the Max Planck Institute for Mathematics in Bonn for the financial support and excellent research environment. M.S. thanks the Department of Mathematical Sciences of Durham University for support and excellent research atmosphere during his visit in the summer of 2024.

\section{Quiver mutations and triangulated surfaces}
\label{prelim}

In this section, we recall the essential facts and definitions about quivers, their mutations, and their relation to triangulated marked surfaces.

\subsection{Quivers and mutations}
\label{qm}
A {\em quiver} is an oriented multigraph without loops and $2$-cycles. We denote by $n$ the number of vertices of a quiver $Q$, and index its vertices by $1,\dots,n$.

For every vertex $k$ of a quiver $Q$ one can define an involutive operation  $\mu_k$ called {\it mutation of $Q$ in direction $k$}. This operation produces a new quiver  denoted by $\mu_k(Q)$ which can be obtained from $Q$ in the following way (see~\cite{FZ2}): 
\begin{itemize}
\item
orientations of all arrows incident to the vertex $k$ are reversed; 
\item
for every pair of vertices $(i,j)$ such that $Q$ contains arrows directed from $i$ to $k$ and from $k$ to $j$ the number of arrows joining $i$ and $j$ changes as described in Figure~\ref{quivermut}.
\end{itemize} 

\begin{figure}[!h]
\begin{center}
\includegraphics[width=0.5\linewidth]{./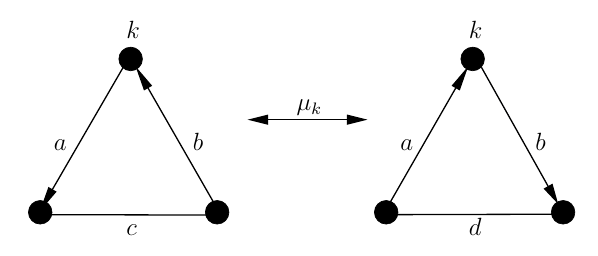}\\
\medskip
$\pm{c}\pm{d}={ab}$
\caption{Mutations of quivers. The sign before ${c}$ (resp., ${d}$) is positive if the three vertices form an oriented cycle, and negative otherwise. Either $c$ or $d$ may vanish. If $ab$ is equal to zero then neither value of $c$ nor orientation of the corresponding arrow does change.}
\label{quivermut}

\end{center}
\end{figure}

Given a quiver $Q$, its {\it mutation class} is the set of all quivers obtained from $Q$
 by all sequences of iterated mutations. All quivers from one mutation class are called {\it mutation-equivalent}.

\subsection{Quivers of finite, affine, and finite mutation types}

A quiver is of {\it finite type} if it is mutation-equivalent to an orientation of a simply-laced Dynkin diagram (i.e., of type $A,D,E$).

A quiver is of {\it affine type} if it is mutation-equivalent to an orientation of a simply-laced  affine Dynkin diagram different from an oriented cycle.

Note that any subquiver of a quiver of finite (resp. affine) type is of finite (resp., either of finite or of affine) type (see~\cite{BMR,Z}).

\begin{remark}
\label{a}
Let $\Sigma$ be a simply-laced Dynkin diagram (or a simply-laced affine Dynkin diagram different from $\widetilde A_n$). Then all orientations of $\Sigma$ are mutation-equivalent. The orientations of  $\widetilde A_{n-1}$ split into $[n/2]$ mutation classes $\widetilde A_{k,n-k}$ (each class contains a cyclic representative with only two changes of orientations with $k$ consecutive arrows in one direction and $n-k$ in the other, $0<k<n$).

\end{remark}

A quiver  is called {\it mutation-finite} (or {\it of finite mutation type}) if its mutation class is finite. Otherwise, it is called {\em mutation-infinite}.

As it is shown in~\cite{FeSTu1}, a quiver of finite mutation type
either has only two vertices, or corresponds to a triangulated surface (see Section~\ref{triang}), or belongs to one of finitely many exceptional mutation classes. In particular, all mutation-finite quivers with $3,4,5$ vertices correspond to triangulated surfaces.

\subsection{Triangulated surfaces  and quivers}
\label{triang}

The correspondence between quivers of finite mutation type and triangulated surfaces is developed in~\cite{FST}. Here we briefly remind the basic definitions.

By a {\it surface} we mean an orientable surface (possibly with boundary components) and a finite non-empty set of marked points, with at least one marked point at each boundary component. Non-boundary marked points are called {\it punctures}. 

An {\em arc} is a simple curve with endpoints at marked points (defined up to isotopy) not going through other marked points and not homotopic to a boundary segment. Two arcs are {\em compatible} if their interiors do not intersect. An (ideal) {\it triangulation} of a surface is a maximal collection of distinct mutually compatible arcs, i.e. a triangulation with vertices of triangles at the marked points. If self-folded triangles are allowed one needs to follow~\cite{FST} to consider triangulations as {\it tagged triangulations} (we are neither reproducing nor using all the details in this paper).

Given a triangulated surface, one constructs a quiver in the following way:
\begin{itemize}
\item vertices of the quiver correspond to the (non-boundary) arcs of a triangulation;
\item two vertices are connected by a simple arrow if they correspond to two sides of the same triangle (i.e., there is one simple arrow between given two vertices for every such triangle);
inside the triangle orientations of the arrow are arranged counter-clockwise (with respect to some orientation of the surface);
\item two simple arrows with opposite directions connecting the same vertices cancel out;
two simple arrows in the same direction compose a double arrow;
\item for a self-folded triangle (with two sides identified), two distinct sides of the triangle correspond to two distinct vertices of the quiver not connected by an arrow;
a vertex corresponding to the ``inner'' side of the triangle is connected to other vertices in the same way as the vertex corresponding to the outer side of the triangle.

\end{itemize}

Triangulations undergo {\em flips}: given an arc of a triangulation, one can remove it and replace with another diagonal of obtained quadrilateral (while flipping the inner side of a self-folded triangle tagged arcs should be involved, see~\cite{FST}). It was shown in~\cite{FST} that flips of triangulations correspond precisely to mutations in the corresponding vertices of the quivers.

We will extensively use the following technical result.

  \begin{lemma}
    \label{no2}
    Let $T$ be a triangulation of a marked surface $S$ containing neither loops nor self-folded triangles. Then the corresponding quiver $Q$ does not contain double arrows (unless $S$ is an annulus with precisely two marked points).
  \end{lemma}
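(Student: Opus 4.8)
The plan is to analyze how a double arrow in the quiver $Q$ could arise from the triangulation $T$. Recall that an arrow between vertices $i$ and $j$ of $Q$ is obtained by summing contributions over all triangles of $T$ that have both of the corresponding arcs as sides, with signs determined by the counter-clockwise orientation convention. A double arrow between $i$ and $j$ therefore requires one of two scenarios: either (a) there are two distinct triangles, each having both arc $i$ and arc $j$ as sides, with the orientations conspiring to add rather than cancel; or (b) a single triangle contributes a double arrow, which (by the construction rules) only happens through a self-folded triangle. Scenario (b) is excluded by hypothesis, so I would focus on scenario (a).

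First I would set up scenario (a) carefully: let $t_1$ and $t_2$ be two distinct triangles of $T$, both having arc $i$ and arc $j$ among their sides. Since a triangle has only three sides, each of $t_1, t_2$ has a third side; call them $e_1$ and $e_2$ (these may be boundary segments or interior arcs, and possibly $e_1 = e_2$). Now $i$ and $j$ together with $e_1$ bound the triangle $t_1$, and $i$ and $j$ together with $e_2$ bound $t_2$. Gluing $t_1$ and $t_2$ along arc $i$ and also along arc $j$, I get a subsurface of $S$. The key local observation is that the union $t_1 \cup t_2$, glued along both shared arcs $i$ and $j$, forms a surface whose boundary consists only of $e_1$ and $e_2$ — i.e., a sphere with (at most) two boundary components/marked arcs, which forces the ambient surface to be very small. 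I would argue that this configuration is exactly an annulus: arcs $i$ and $j$ are the two arcs cutting an annulus into two triangles $t_1, t_2$, and the annulus has precisely two marked points (one on each boundary component, or both on one component), matching the stated exception.

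The second thing to rule out is that the double arrow comes not from two full triangles sharing two sides, but from the self-folded-triangle clause in reverse — i.e., that even without self-folded triangles the "inner side" mechanism could produce a doubled arrow. But since $T$ contains no self-folded triangles by assumption, every triangle has three distinct sides corresponding to three (possibly coinciding as unoriented arcs, but then that would be a loop or self-folded situation, also excluded) distinct vertices, so each triangle contributes at most a simple arrow between any pair of its vertices. Hence the only route to a double arrow is scenario (a), and I must then carefully check that the orientations in $t_1$ and $t_2$ actually reinforce rather than cancel: this requires tracking the counter-clockwise convention on both triangles with respect to the fixed orientation of $S$, and observing that when $t_1$ and $t_2$ are glued along $i$ and $j$ to form the annulus, the induced orientations make both arrows $i \to j$ (or both $j \to i$), so they add. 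If instead they cancelled, there would be no arrow at all, which is consistent but not the case we need to exclude.

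The main obstacle I anticipate is the bookkeeping in the gluing/orientation argument: one must be careful that arcs $i$ and $j$ are genuinely distinct interior arcs (not a boundary segment, since only interior arcs give quiver vertices), that $t_1 \ne t_2$, and that no hidden identification (such as $e_1$ or $e_2$ being glued to something, or an endpoint being a puncture creating extra arcs) spoils the conclusion that the subsurface is exactly an annulus with two marked points. I would handle this by a short case analysis on whether the third sides $e_1, e_2$ are boundary segments or arcs and on the identifications of the vertices (marked points) of $t_1$ and $t_2$, using the absence of loops and self-folded triangles to eliminate all cases except the annulus. Once the subsurface is pinned down as the two-triangle annulus, the statement follows, since in that configuration $S$ itself must be that annulus (any larger $S$ would require a third side to be glued to further triangles, reintroducing extra arcs and changing the count).
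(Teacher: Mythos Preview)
Your approach is essentially the same as the paper's, but you have not isolated the one-line reason the argument terminates, and your final paragraph substitutes an incorrect explanation for it.

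Once you glue $t_1$ and $t_2$ along both $i$ and $j$, each boundary component of the resulting annulus consists of a \emph{single} arc ($e_1$ or $e_2$) and hence carries a single marked point; therefore $e_1$ and $e_2$ are loops. That is the whole point: if $S$ is anything other than this annulus, at least one of $e_1,e_2$ is an interior arc of $T$, i.e.\ a loop in $T$, contradicting the hypothesis. The paper's proof is exactly this observation in two sentences.

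Your last paragraph instead claims that a larger $S$ would ``reintroduce extra arcs and change the count''. This is false: attaching further triangles along $e_1$ or $e_2$ creates new quiver vertices and new arrows incident to them, but it does not alter the number of arrows between $i$ and $j$, which is determined solely by the triangles containing both $i$ and $j$ as sides. So the double arrow would persist; what fails is the loop-free hypothesis, not the arrow count. Your earlier remark about ``using the absence of loops\dots\ to eliminate all cases'' is the correct instinct---just make that the conclusion rather than the count argument. The orientation check and the worry about hidden identifications are unnecessary once you see that $e_1,e_2$ are forced to be loops.
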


  \begin{proof}
Suppose that there is a  double arrow in $Q$. Since $T$ contains no self-folded triangles, this implies that there are two triangles of $T$ sharing two sides to form an annulus (bounded by two loops). By the assumptions, at least one of these two loops is not a boundary arc of $S$, which proves the lemma. 

    \end{proof}

\section{Groups from triangulations}
\label{group}

In this section we define a group by a given triangulation of a punctured surface without loops. The definition follows the one from~\cite{BM}. 

\subsection{Construction}
\label{construction}

\begin{definition}
We call a triangulation {\em loop-free} if it does not contain loops and self-folded triangles.  A flip between triangulations $T$ and $T'$ is {\em loop-free} if both $T$ and $T'$ are loop-free. 

  \end{definition}

Note that by Lemma~\ref{no2} the quiver corresponding to a loop-free triangulation does not contain double arrows.
  
    Let $Q$ be a quiver of a loop-free triangulation of a marked surface $S$, index vertices of $Q$ by $1,\dots,n$. Denote by $G(Q)$ the group generated by $n$ generators $s_i$ with the following defining relations (R1)--(R3):

\begin{itemize}
\item[(R1)] $s_i^2=e$ for all $i=1,\dots,n$;

\item[(R2)] $(s_is_j)^{m_{ij}}=e$ for all $i,j$, where
$$
m_{ij}=
\begin{cases}
2 & \text{if $i$ and $j$ are not joined;} \\
3 & \text{if $i$ and $j$ are joined by an arrow.}
\end{cases}
$$

\item[(R3)] (cycle relation) for every chordless oriented cycle $C$ given by 
$$i_1{\to} i_2{\to}\cdots{\to} i_{d}{\to}i_1$$
 we take a relation
$$
(s_{i_1}s_{i_{2}}\dots s_{i_{d-1}} s_{i_{d}}s_{i_{d-1}}\dots s_{i_{2}})^2=e.
$$

\end{itemize}

\begin{remark}
  \label{cox}
  The group $G(Q)$ can be written as
  $$G(Q)=\langle s_1,\dots,s_n\mid s_i^2, (s_is_j)^{m_{ij}}, (\mathrm R3)\rangle=\tilde G(Q)/N(Q),$$
where $\tilde G(Q)$ is a Coxeter group, and $N(Q)$ is the normal closure of the relations of type (R3) in  $\tilde G(Q)$.
\end{remark}

\begin{remark}
In papers~\cite{affine,manifolds,EAWG}, where the corresponding groups were defined for quivers and diagrams from unpunctured surfaces and orbifolds (as well as for quivers and diagrams of affine type) the construction of group involves more types of relations. These types are not relevant here as they all concern subquivers corresponding to triangulated subsurfaces with loops or self-folded triangles (see~\cite{Gu1,Gu2}).
  \end{remark}

\subsection{Invariance}
\label{invariance}

In this section, we prove invariance of the group $G(Q)$ under loop-free flips, and formulate the main results.

\begin{lemma}
  \label{flip_invariance}
  Let $T$ be a loop-free triangulation of $S$, denote by $Q$ the corresponding quiver. Let $f_k$ be a loop-free flip corresponding to mutation of $Q$ at vertex $k$, denote by $Q'=\mu_k(Q)$ the quiver of triangulation $f_k(T)$. Then the group $G(Q')$ is isomorphic to $G(Q)$.

Furthermore, if we denote the generators of $G(Q')$ by $s_i'$, then the isomorphism can be constructed as follows:  $s_i'$ is mapped to  $t_i\in G(Q)$, where 
$$
t_i=\begin{cases}
s_ks_is_k & \text{if there is an arrow $i\to k$ in $Q$,} \\
s_i & \text{otherwise},
\end{cases}\eqno (*)
$$
 
\end{lemma}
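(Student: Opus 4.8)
The plan is to verify that the map $s_i'\mapsto t_i$ defined by $(*)$ extends to a group homomorphism $G(Q')\to G(Q)$, and then to observe that the same recipe applied to $Q'$ and the mutation $\mu_k$ produces the inverse homomorphism (since $\mu_k$ is an involution), so that the two maps are mutually inverse isomorphisms. The bulk of the work is the first step: one must check that each defining relation (R1), (R2), (R3) of $G(Q')$ is sent to a relation that holds in $G(Q)$. Relation (R1) is immediate, since $t_i^2=e$ in all cases ($s_i^2=e$ and conjugates of involutions are involutions). For (R2) and (R3) the key point is to compare the local structure of $Q'$ around the arrows/cycles involved with that of $Q$, using the combinatorial mutation rule of Figure~\ref{quivermut}, and here the crucial simplification — available because $T$ and $T'=f_k(T)$ are both loop-free — is that, by Lemma~\ref{no2}, neither $Q$ nor $Q'$ contains double arrows. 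This rules out all the higher-multiplicity configurations that make the general Barot--Marsh-type argument delicate, and means $m_{ij}\in\{2,3\}$ throughout.

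For relation (R2) of $G(Q')$: fix $i,j$ and the value $m'_{ij}\in\{2,3\}$ read off from $Q'$. We must show $(t_it_j)^{m'_{ij}}=e$ in $G(Q)$. Break into cases according to whether $i,j,k$ are all distinct or one of $i,j$ equals $k$, and, when distinct, according to the arrows among $\{i,j,k\}$ in $Q$. When neither $i\to k$ nor $j\to k$ is present in $Q$, mutation at $k$ does not change the arrow between $i$ and $j$, so $m'_{ij}=m_{ij}$ and $t_i=s_i$, $t_j=s_j$, and the relation is just (R2) for $Q$. In the remaining cases one substitutes $t_i=s_ks_is_k$ and/or $t_j=s_ks_js_k$ and rewrites $(t_it_j)^{m'_{ij}}$; since all the relevant pairwise exponents in $Q$ lie in $\{2,3\}$, the word in $s_i,s_j,s_k$ lives inside the finite Coxeter group generated by these three involutions (which, being rank~$3$ with all bonds $2$ or $3$, is one of $A_1^3$, $A_1\times A_2$, or $A_3$), and one checks the identity there — this is a bounded, purely mechanical computation. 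The arrow-count change in the mutation rule is exactly what makes $m'_{ij}$ come out to the value that makes the rewritten word trivial; the $3$-cycle (oriented triangle) case is where one sees the interplay between (R2) in $Q'$ and the cycle relation (R3) in $Q$, and conversely.

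For relation (R3) of $G(Q')$: let $C'$ be a chordless oriented cycle in $Q'$. If $k\notin C'$ and no vertex of $C'$ has an arrow into $k$ in $Q$, then $C'$ is already a chordless oriented cycle of $Q$ on the same vertices with the same orientation, and $t_i=s_i$ along $C'$, so the relation is (R3) for $Q$. Otherwise one traces how $C'$ arises under $\mu_k$: a chordless cycle of $Q'$ either comes from a chordless cycle of $Q$ (possibly with $k$ on it, in which case conjugation by $s_k$ absorbs the change of the two arrows at $k$), or is created by the mutation from a path through $k$ together with the new arrow the mutation adds; in the loop-free, double-arrow-free setting the number of such local pictures is small. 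In each picture, substituting $(*)$ turns the proposed cycle word for $C'$ into a conjugate (by a product of $s_k$'s) of a cycle word for the corresponding cycle of $Q$, or into a consequence of (R2) and (R3) of $Q$ combined; one then invokes the corresponding relation of $G(Q)$. I expect this case analysis for (R3) — in particular, correctly enumerating which chordless cycles of $Q'$ are ``new'' and matching their cycle words, under the conjugation bookkeeping forced by $(*)$, to relations available in $G(Q)$ — to be the main obstacle; everything else is either trivial or a finite-group verification. Once the homomorphism $G(Q')\to G(Q)$ is established, symmetry of the construction under $\mu_k$ (noting that $i\to k$ in $Q$ iff $k\to i$ in $Q'$, so the ``otherwise'' and ``conjugate'' cases swap correctly, and $s_k(s_ks_is_k)s_k=s_i$) gives a homomorphism back which is visibly inverse on generators, completing the proof.
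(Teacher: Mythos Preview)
Your overall strategy matches the paper's: verify that $s_i'\mapsto t_i$ respects (R1)--(R3), then use involutivity of $\mu_k$ for the inverse. Relation (R1) is fine. Your (R2) argument contains a slip: when $i,j,k$ span an oriented triangle in $Q$, the Coxeter group on $s_i,s_j,s_k$ with all pairwise exponents equal to~$3$ is the \emph{infinite} group $\tilde A_2$, not one of the finite groups you list; it is precisely the cycle relation (R3) on that triangle that collapses it to a copy of $W(A_3)$. You gesture at this in your next sentence, but the claim that the three-generator subgroup is automatically a finite Coxeter group is false as stated.

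The substantive gap is in (R3). You correctly flag it as the main obstacle, but your plan (``trace how the cycle arises,'' ``the number of local pictures is small'') does not identify the mechanism that makes the case analysis finite, and a naive enumeration of how chordless cycles of $Q'$ relate to those of $Q$ does not obviously close. The paper does not verify cycle words by direct manipulation; instead, for each chordless cycle $C'$ of $Q'$ it considers the subquiver on $C'\cup\{k\}$ and argues that this subquiver is of finite or affine type, so that the needed relation is already supplied by~\cite{BM} or~\cite{affine}. For $|C'|\le 4$ this is a bounded check (using that mutation-infinite subquivers cannot occur, and that configurations producing a double arrow under $\mu_k$ are excluded by Lemma~\ref{no2}). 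For $|C'|\ge 5$ with $k\notin C'$, the paper invokes the surface model via~\cite{Gu1,Gu2}: such a cycle is the quiver of a once-punctured disc triangulated as a fan, and attaching one further arc corresponding to $k$ either glues an external triangle (giving an affine subquiver, handled by~\cite{affine}) or identifies two boundary sides of the disc --- and in the latter case the flip $f_k$ creates a loop, contradicting the loop-free hypothesis. This last step is where the loop-free assumption does essential work beyond merely excluding double arrows, and it is the idea your sketch is missing.
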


\begin{remark}
\label{in-or-out}
In the definition of $t_i$ we could freely choose to conjugate $s_i$ for outgoing arrows   $i\gets k$
rather than for incoming arrows  $i\to k$: this alteration does not affect the group with generators $\{t_i\}$ since it is equivalent to conjugation of all generators by $s_k$.

\end{remark}

  \begin{proof}[Proof of Lemma~\ref{flip_invariance}]
    We need to show that for any loop-free flip the elements $t_i$ satisfy the same relations as the generators $s_i'$ of the group $G(Q')$ (and vice versa). Since $\{t_i\}_{i=1,\dots,n}$ generate  $G(Q)$, this will mean that  the groups $G(Q)$ and $G(Q')$ are isomorphic.

    For this, consider the support of any relation in $G(Q')$, there are two cases: either $k$ belongs to the support, or it does not.

    In the former case, note that the subquiver induced by the support (which is always either an oriented cycle or a single arrow) is of finite type. Therefore, the corresponding relation on $t_i$ is satisfied by~\cite{BM}.

    Thus, we may assume from now on that $k$ does not belong to the support of the relation.

    If the support consists of a single arrow $i\to j$, then $k$ can be attached to it in nine ways ($k$ is connected to each of $i$ and $j$ by at most one arrow), and it is easy to see that if the resulting subquiver is connected then it is either of finite type $A_3$ or affine type $\tilde A_2$, so  the result follows from~\cite{BM,affine}.

    Similarly, if the support consists of a triangle, we can consider all possible subquivers induced by its union with $k$. If $k$ is connected to all three vertices, then we get a mutation-infinite quiver; if $k$ is connected to one or two vertices of the triangle, we obtain a quiver of finite or affine type, so again the result follows from~\cite{BM,affine}.

    Therefore, we need to consider cycles of length at least $4$, denote such cycle by $Q'_C\subset Q'$, and its union with $k$ by $\tilde Q'_c$. By~\cite[Prop. 2.1]{S}, $k$ can be connected to $Q'_C$ by $1,2$ or $4$ arrows. 

    Assume first that $|Q_C'|=4$, then a straightforward check similar to the one above shows that the $\tilde Q'_c$ is either mutation-infinite, or satisfies one of the following conditions: either a mutation in $k$ produces a double arrow (and thus $f_k$ is not loop-free by Lemma~\ref{no2}), or $\tilde Q'_c$ is of finite or affine type (so the result follows from~\cite{affine}), or $k$ is a sink or a source (in which case we may assume that $\mu_k$ does not affect the generators, see Remark~\ref{in-or-out}).

    Finally, assume that $|Q_C'|=m\ge 5$. By~\cite{Gu1,Gu2}, the subquiver  $Q_C'$ corresponds to a once punctured disc with $m$ marked points on its boundary, each of them is joined with the puncture by an arc. Therefore, there are precisely two ways to add one more arc (corresponding to vertex $k$): either a triangle is glued to a side of the disc, or two sides of the disc are identified. In the former case $\tilde Q'_c$ is of affine type, so we are again in the assumptions of~\cite{affine}. In the latter case, the flip $f_k$ produces a loop (see Fig.~\ref{cycle}), so we come to a contradiction with the assumption of $f_k$ being loop-free.

   \end{proof}

\begin{figure}[!h]
\begin{center}
\includegraphics[width=0.5\linewidth]{./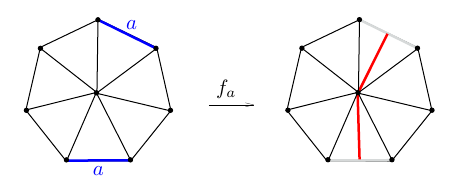}
  \caption{To the proof of Lemma~\ref{flip_invariance}: flipping the arc $a$ produces a loop.}
\label{cycle}
\end{center}
\end{figure}

   \begin{definition}
By a {\em feature} of a marked surface we mean either a puncture or a boundary component.
\end{definition}

   In Proposition~\ref{trans} below we show that all combinatorial types of loop-free triangulations can be connected by sequences of loop-free flips. As a corollary of Proposition~\ref{trans} and Lemma~\ref{flip_invariance}, we obtain the main result of the paper.

   \begin{theorem}
\label{invariance-th}
Let $S$ be a marked surface with at least $4$ features. Then the group $G(Q)$ is an invariant of $S$, i.e. it does not depend on the triangulation. 

\end{theorem}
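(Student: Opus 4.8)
The plan is to derive Theorem~\ref{invariance-th} as an immediate consequence of the two results that precede it: Lemma~\ref{flip_invariance} (invariance of $G(Q)$ under loop-free flips) and Proposition~\ref{trans} (any two loop-free triangulations of a surface with at least $4$ features are connected by a sequence of loop-free flips, up to combinatorial equivalence). Concretely, I would argue as follows. Fix a marked surface $S$ with at least $4$ features, and let $T$ and $T'$ be two loop-free triangulations of $S$, with corresponding quivers $Q$ and $Q'$. By Proposition~\ref{trans} there is a sequence of loop-free flips carrying $T$ to a triangulation $T''$ that is combinatorially equivalent to $T'$; in particular the quiver $Q''$ of $T''$ is isomorphic to $Q'$, so $G(Q'')\cong G(Q')$ since the presentation in Section~\ref{construction} depends only on the isomorphism type of the quiver. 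Each flip in the sequence is loop-free, so by iterating Lemma~\ref{flip_invariance} we get $G(Q)\cong G(Q'')$. Composing the two isomorphisms yields $G(Q)\cong G(Q')$, which is exactly the assertion that $G(Q)$ does not depend on the triangulation.

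Two small points deserve a sentence of care in the writeup. First, one must observe that every loop-free triangulation of $S$ indeed has a well-defined quiver with no double arrows — this is guaranteed by Lemma~\ref{no2}, after checking that $S$ is not an annulus with exactly two marked points (such a surface has only $2$ features, hence is excluded by the hypothesis of at least $4$ features). Second, one should note that the isomorphism produced by Lemma~\ref{flip_invariance} is constructed explicitly via the substitution $(*)$, so the composite isomorphism is canonical up to the choices made in Remark~\ref{in-or-out}; for the statement of the theorem only the existence of an isomorphism is needed, so these choices are immaterial.

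The genuine mathematical content — and hence the real obstacle — is entirely contained in Proposition~\ref{trans}, whose proof occupies Sections~\ref{closed} and~\ref{bordered}: one must show that the graph whose vertices are combinatorial types of loop-free triangulations and whose edges are loop-free flips is connected. The subtlety is that the standard connectivity of the flip graph (from~\cite{FST}) allows flips that pass through triangulations with loops or self-folded triangles, and one must reroute any such sequence to avoid them; this requires separate combinatorial analysis for closed surfaces and for surfaces with boundary, and is precisely where the hypothesis on the number of features is used. By contrast, the deduction of Theorem~\ref{invariance-th} from Lemma~\ref{flip_invariance} and Proposition~\ref{trans} is a one-line formal argument, so I would present it as such and refer the reader forward for the proof of Proposition~\ref{trans}.

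\begin{proof}[Proof of Theorem~\ref{invariance-th}]
Let $T$ and $T'$ be two loop-free triangulations of $S$, and let $Q$, $Q'$ be the corresponding quivers; by Lemma~\ref{no2} (and since $S$, having at least $4$ features, is not an annulus with two marked points) these quivers have no double arrows, so the groups $G(Q)$ and $G(Q')$ are defined. By Proposition~\ref{trans} there is a sequence of loop-free flips taking $T$ to a triangulation $T''$ combinatorially equivalent to $T'$. Applying Lemma~\ref{flip_invariance} to each flip in this sequence, we obtain $G(Q)\cong G(Q'')$, where $Q''$ is the quiver of $T''$. Since $T''$ and $T'$ are combinatorially equivalent, their quivers are isomorphic, and hence $G(Q'')\cong G(Q')$ because the relations (R1)--(R3) depend only on the isomorphism type of the quiver. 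Therefore $G(Q)\cong G(Q')$, so $G(Q)$ does not depend on the choice of loop-free triangulation of $S$.
\end{proof}
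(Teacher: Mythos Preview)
Your proposal is correct and matches the paper's own approach exactly: the paper explicitly states that Theorem~\ref{invariance-th} is obtained ``as a corollary of Proposition~\ref{trans} and Lemma~\ref{flip_invariance},'' without giving any further argument. Your writeup simply makes this one-line deduction explicit (with some additional care about Lemma~\ref{no2} that the paper leaves implicit), and correctly identifies that all the substance lies in Proposition~\ref{trans}.
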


The bulk of the paper is devoted to the proof of the following statement.

\begin{prop}
  \label{trans}
Let $S$ have at least $4$ features, and let $T$ and $T'$ be two loop-free triangulations of $S$. Then there exists a sequence of loop-free flips taking $T$ to $T''$ which is combinatorially equivalent to $T'$.

  \end{prop}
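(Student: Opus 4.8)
The plan is to reduce the statement to a connectivity question about the graph whose vertices are loop-free triangulations of $S$ (up to combinatorial equivalence) and whose edges are loop-free flips, and to prove connectivity by a two-stage argument. First I would recall that the flip graph on \emph{all} (tagged) triangulations of $S$ is connected, by~\cite{FST}; the obstruction is only that a generic flip path between two loop-free triangulations may pass through triangulations containing loops or self-folded triangles, so the task is to find a detour that avoids the ``bad'' part of the flip graph. The natural strategy is to show that from any loop-free triangulation one can reach a fixed convenient \emph{model triangulation} $T_0$ (depending only on the topological type of $S$ and the distribution of marked points on its features) by loop-free flips; then $T$ and $T'$ can both be connected to (combinatorial copies of) $T_0$, and composing one path with the reverse of the other finishes the proof.

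The key steps, in order, are as follows. (i) Fix a model $T_0$: for a surface with genus $g$, $b$ boundary components with prescribed numbers of boundary marked points, and $p$ punctures, build $T_0$ by taking a standard triangulation of the underlying surface-with-boundary and then, for each puncture, inserting it inside a triangle and connecting it to all three vertices of that triangle (a ``tripod'' at the puncture), choosing the triangles so that no two punctures share a triangle; one checks directly that $T_0$ is loop-free. (ii) Local normalization near punctures: show that any loop-free triangulation can be brought by loop-free flips to one in which every puncture sits in a tripod as above — this is where the hypothesis of at least $4$ features is used, since with enough ``room'' one can always flip the arcs incident to a puncture into the tripod configuration without ever being forced through a self-folded triangle (the danger configuration, a puncture enclosed by a single loop forming a self-folded triangle, is exactly what extra features let us avoid). (iii) Once punctures are normalized, the arcs not incident to any puncture form a triangulation of the complement, a surface with boundary (the boundaries of the small punctured discs around each tripod, plus the original boundary), and here one invokes the unpunctured case already handled in~\cite{affine}: loop-free flips suffice to connect any two triangulations of an unpunctured (or rather loop-free) surface to a standard one, and these flips do not touch the tripods, so they remain loop-free throughout. (iv) Finally, account for the finitely many combinatorial choices (which triangles carry punctures, how the tripods are cyclically oriented, the $[n/2]$ mutation classes in any annular piece as in Remark~\ref{a}) and check these are either irrelevant up to combinatorial equivalence or interchangeable by loop-free flips; this is presumably where the paper splits into the closed-surface case (Section~\ref{closed}) and the bordered case (Section~\ref{bordered}).

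The main obstacle is step (ii): controlling the flips near a puncture so that one is never \emph{forced} to create a loop or a self-folded triangle. Flipping an arc incident to a puncture can produce a self-folded triangle when the puncture becomes the ``inside'' vertex, and the whole point of the $4$-feature hypothesis is that there is always an alternative arc to flip first that defuses this. Making this precise requires a careful case analysis of the link of a puncture in a loop-free triangulation — essentially classifying the triangulated punctured polygons that can appear around a puncture and exhibiting, in each, a loop-free flip moving toward the tripod — and then an induction on some complexity measure (e.g. total number of arcs incident to punctures, or the number of punctures not yet in tripod position). I expect the bulk of Sections~\ref{closed} and~\ref{bordered} to be devoted exactly to this analysis, with the low-feature exclusions (Section~\ref{few}) being precisely the cases where no such loop-free detour exists.
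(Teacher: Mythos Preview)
Your architecture --- reduce both triangulations to a canonical form and compose paths --- is sound and matches the paper's overall shape. But the decomposition of the difficulty is inverted, and step (iii) contains a genuine gap.

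The gap is the appeal to~\cite{affine} in step (iii). That paper does \emph{not} prove that loop-free flips act transitively on loop-free triangulations of an unpunctured surface; it proves the group $G(Q)$ is invariant under \emph{all} flips, by allowing additional relation types (precisely those arising from subquivers that correspond to loops). So there is no off-the-shelf ``unpunctured connectivity'' result to invoke. Worse, even granting such a result, your reduction does not quite land there: once every puncture is in a tripod, the three sides of each tripod triangle are arcs of the ambient triangulation, and flipping arcs in the ``complement'' will in general destroy those triangles. To keep the tripods intact you must freeze their outer sides, but then you are comparing two triangulations of the complement relative to a \emph{fixed} collection of triangles --- and there is no reason the two tripod configurations coming from $T$ and $T'$ agree on where those triangles sit. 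You would need a ``tripod-moving'' lemma, and that is exactly where the real work lives.

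The paper's approach differs from yours in two substantive ways. First, it encloses extra marked points in \emph{digons} rather than tripods, and iterates digon removal to reach an \emph{associate triangulation} with exactly three marked points; Remark~\ref{all3} shows that such a triangulation is a punctured $(4g+2)$-gon with some side-pairing, and admits \emph{no} loop-free flips at all --- so the base object is completely rigid, and the $4$-feature hypothesis is needed not to provide ``room for local moves'' but to supply a fourth point that re-enables flips. Second, and this is the heart of the argument, the paper shows (Lemma~\ref{regluing}) that any side-pairing of the $(4g+2)$-gon can be brought to the standard opposite-side pairing by a sequence of \emph{admissible regluings} (cut along a diagonal, reglue along a pair of identified sides), and then (Lemma~\ref{double-wheel}, Corollary~\ref{glue-flips}) that each regluing can be realised by loop-free flips using the fourth marked point as a pivot. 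This polygon-regluing argument is the substitute for your missing step (iii), and it is where the bulk of Sections~\ref{closed}--\ref{bordered} goes; the local normalisation you flagged as the ``main obstacle'' (your step (ii)) is comparatively routine (Lemmas~\ref{reduce}, \ref{make_digon}, \ref{digons}).
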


For simplicity, we first consider surfaces without boundary (Section~\ref{closed}). We then adjust the proof to the case of bordered surfaces in Section~\ref{bordered}.

  \begin{remark}
\label{counter}
Proposition~\ref{trans} does not hold if we assume that $S$ has $3$ marked points only, an example of two distinct triangulations of thrice punctured genus $2$ surface is shown in Fig.~\ref{counterexample}. The triangulations are distinct indeed: cutting the surface along all arcs connecting points $b$ and $c$ in the triangulation on Fig.~\ref{counterexample}, left, results in the identification of sides of a $(4g+2)$-gon isomorphic to the original one, while  cutting the surface along all arcs connecting points $b$ and $c$ in the triangulation on Fig.~\ref{counterexample}, right, results in an identification of sides different from the original one (more precisely, no pair of opposite sides is identified). None of the triangulations admits any loop-free flips.

   \end{remark} 

\begin{figure}[!h]
\begin{center}
\includegraphics[width=0.9\linewidth]{./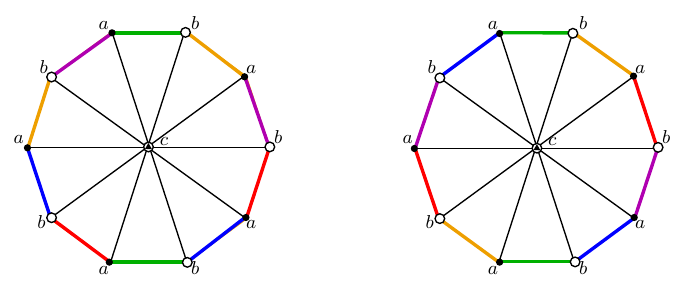}
  \caption{Two triangulations of a thrice punctured surface of genus $2$. Sides colored in the same way are identified to get an oriented surface. Symmetries of the triangulation on the left act transitively on all three vertices, while the triangulation on the right has two different types of vertices.}
\label{counterexample}
\end{center}
\end{figure}

\section{Loop-free triangulations of closed surfaces} 
\label{closed}

In this section, we assume that $S$ has no boundary and has at least three punctures. The section is devoted to proving Prop.~\ref{trans} for closed surfaces.

We proceed according to the following plan. In Section~\ref{three}, we show that there always exists a  loop-free triangulation of $S$, and describe a general form of such triangulation in the case of precisely three marked points. In Section~\ref{to-3}, we show that using loop-free flips we can transform a triangulation to a specific form, namely, if $S$ is of genus $g$ with $m$ marked points, then the resulting triangulation is a triangulation of a genus $g$ surface with $3$ marked points and additionally with $m-3$ digons glued in along one of the arcs. In Sections~\ref{glue},~\ref{four}, we prove Proposition~\ref{trans} for surfaces with precisely four marked points, and in Section~\ref{trans-closed} we extend the proof to all closed surfaces with at least four marked points.   

\subsection{Closed surfaces with three marked points}
\label{three}

\begin{lemma}
  \label{3-exists}
Let $S$ have at least $3$ marked points. Then there exists a loop-free triangulation of $S$. 
  
  \end{lemma}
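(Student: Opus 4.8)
The plan is to construct a loop-free triangulation of $S$ explicitly, using the hypothesis that $S$ has at least three marked points. First I would handle the case that $S$ has boundary: pick any triangulation $T_0$ of $S$ (which exists), and observe that a loop in a triangulation is an arc both of whose endpoints coincide at a puncture $p$; such an arc cuts off a once-punctured monogon, and the inner arc inside it forms a self-folded triangle. The idea is to eliminate loops one at a time by flipping. Flipping the inner arc of a self-folded triangle (a tagged flip) removes the self-folded configuration, re-tagging the arc at $p$; but since we want an honestly loop-free (not merely self-folded-free) triangulation, I would instead argue that one can always choose a triangulation in which no puncture is enclosed in a monogon, by starting from a suitable ``fan'' or ``zig-zag'' triangulation adapted to the surface.

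Concretely, here is the approach I expect to work uniformly. Since $S$ has at least $3$ marked points, choose three of them, say $a,b,c$ (if there are only three marked points this is forced; this is the case the lemma says it will describe in detail afterwards). Realize $S$ as a polygon with side identifications (a $(4g+n)$-gon type model for a closed surface of genus $g$ with the marked points placed appropriately, or with boundary components cut out), arranged so that $a,b,c$ appear on the boundary of the polygon and every identification glues sides whose endpoints are among $\{a,b,c\}$ in a way that produces no monogon. Triangulate the polygon by non-crossing diagonals all incident to $a$ (a fan triangulation); because the polygon is convex, no diagonal is a loop, and after the identifications a diagonal becomes a loop only if its two endpoints get identified to the same marked point — which can be avoided by choosing the fan center and the cyclic placement of the vertices carefully. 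The remaining $n-3$ marked points can then be introduced one at a time: each extra marked point $q$ is placed in the interior of some triangle and joined to the triangle's three vertices, or placed on an existing arc and the arc is subdivided; neither operation creates a loop, since $q$ is a brand-new vertex distinct from all others.

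For the key steps in order: (1) reduce to producing a loop-free triangulation after possibly ignoring the extra punctures, i.e. it suffices to find one with three of the marked points and then add the rest by the interior-subdivision move above; (2) exhibit a polygon-with-identifications model of $S$ in which three chosen marked points sit on the polygon boundary and the gluing pattern has no monogon; (3) take the fan triangulation from one of these three vertices and check, using the explicit gluing, that no resulting arc is a loop or a side of a self-folded triangle; (4) add back the other $n-3$ marked points by interior subdivisions, observing loop-freeness is preserved. The main obstacle, and where I would spend the most care, is step (2)–(3): ensuring the combinatorial model can always be chosen so that the fan triangulation survives the identifications without any diagonal closing up into a loop. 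For closed surfaces this is essentially the standard genus-$g$ polygon with the $a_1b_1a_1^{-1}b_1^{-1}\cdots$ word, all of whose vertices are identified to a single point, so a fan triangulation would immediately give loops; the fix is to use at least two distinct marked points among the polygon vertices (possible since $S$ has $\ge 3$), so that the fan diagonals run between genuinely different marked points, and then to verify loop-freeness of each of the finitely many diagonal-types in the identification word. I would expect the write-up to do this verification once for a canonical model and then note that every $S$ with $\ge 3$ marked points admits such a model.
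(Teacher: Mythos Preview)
Your high-level strategy matches the paper's: build a polygon model for the three-point case, triangulate by a fan, then insert any further marked points one at a time into triangle interiors. The paper does exactly step~(1) and~(4) as you describe.

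Where you diverge is in the construction for three marked points, and this is where your proposal has a real gap. You place all three marked points on the boundary of the fundamental polygon and fan from one of them, say $a$. For this fan to be loop-free after the side identifications, every other vertex of the polygon must lie in a different identification class from $a$; equivalently, $a$ must appear exactly once among the polygon vertices. You acknowledge this is the delicate point but do not produce a model with this property, and the models you mention (the standard $4g$-gon word $a_1b_1a_1^{-1}b_1^{-1}\cdots$) have the opposite feature: all vertices coincide, so any boundary fan is entirely loops. Saying ``choose the cyclic placement carefully'' is not yet a proof.

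The paper sidesteps this entirely by a small change of viewpoint: it takes a $(4g+2)$-gon with \emph{opposite} sides identified, which has exactly two vertex classes (and adjacent vertices lie in different classes, so the sides themselves are never loops), places the third marked point in the \emph{interior} of the polygon, and fans from that interior point to all $4g+2$ boundary vertices. Every spoke then joins the interior puncture to one of the two boundary classes, and every polygon side joins the two boundary classes, so no arc is a loop. This avoids the case analysis you anticipate in steps~(2)--(3). Incidentally, this lemma sits in the closed-surface section; the bordered case is handled separately later, so your opening paragraph about boundary and flipping away self-folded triangles is not needed here.
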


  \begin{proof}
Denote by $g$ the genus of $S$.  If $S$ has precisely three punctures, then $S$ can be glued from once punctured $(4g+2)$-gon $P$ by identifying opposite sides of $P$. Triangulating $P$ by arcs connecting the puncture with all vertices provides the required triangulation, see Fig.~\ref{3} (cf. Remark~\ref{counter}). To add a puncture on $S$, place it inside a triangle and connect with all three vertices. 

    \end{proof}

\begin{figure}[!h]
\begin{center}
\includegraphics[width=0.3\linewidth]{./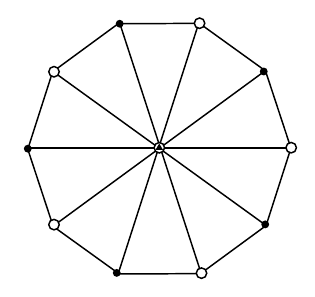}
  \caption{Any loop-free triangulation of a closed surface of genus $g$ with three punctures can be obtained by a side-pairing of a punctured $(4g+2)$-gon triangulated as shown in the figure.}
\label{3}
\end{center}
\end{figure}

\begin{remark}
  \label{all3}
  It is easy to see that all loop-free triangulations of surfaces with $3$ marked points can be constructed as in Lemma~\ref{3-exists} with different identifications of sides of the $(4g+2)$-gon.

  Note also that such a triangulation of a closed surface does not admit any loop-free flips. 
  \end{remark}

\subsection{Reducing the number of marked points to $3$}
\label{to-3}

In this section, we describe an iterative procedure of reducing the number of marked points by enclosing marked points by digons.

First, we note that Lemma~\ref{3-exists} implies that a closed surface with at least $4$ punctures always has a  loop-free triangulation.

\begin{lemma}
\label{edge}
  Let $S$ have at least $4$ marked points, and let $T$ be a loop-free triangulation. Then there exists an arc of $T$ such that both endpoints of the arc are connected to at least $3$ distinct points each.

  \end{lemma}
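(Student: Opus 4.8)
The plan is to argue by contradiction: suppose that every arc of $T$ has at least one endpoint connected to at most $2$ distinct points. Since $T$ is loop-free, no arc is a loop, and no vertex can have valence $1$ in the triangulation (a marked point of valence $1$ forces a self-folded triangle or a loop). So ``connected to at most $2$ distinct points'' really means ``connected to exactly $2$ distinct points'' for such an endpoint. Call a marked point \emph{small} if it is connected to exactly $2$ distinct points, and \emph{large} otherwise. Our assumption is that every arc has a small endpoint; equivalently, the large points form an independent set in the graph underlying $T$ (no arc joins two large points), and in particular every arc has at least one small endpoint.

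First I would analyze the local picture at a small point $p$. Since $p$ has valence (number of arc-ends) at least $2$ but is joined to only $2$ distinct points $x,y$, and $T$ has no self-folded triangles at $p$, the arcs incident to $p$ cut a neighbourhood of $p$ into triangles whose third vertices lie in $\{x,y\}$; a short case check (using absence of loops and self-folded triangles) shows that in fact $p$ has valence exactly $2$, incident to one arc $px$ and one arc $py$, and the two triangles around $p$ are $pxy$ appearing twice — i.e. $p$ sits inside a ``digon'' with vertices $x,y$, with $x\ne y$. Thus every small point is enclosed in a digon bounded by two arcs between two large-or-small points $x,y$. The next step is to push this outward: consider the quotient-type incidence structure obtained by contracting each such digon (removing $p$ and one of the two arcs $px$, $py$, identifying the digon with a single arc $xy$). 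This yields a triangulation $\bar T$ of $S$ with fewer marked points, still loop-free, in which $p$ no longer appears; iterating, we remove all small points and are left with a loop-free triangulation $T^\ast$ of $S$ supported only on the large points.

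The key point is now a counting/combinatorial contradiction. Since the large points form an independent set in $T$, every triangle of $T$ has at most one large vertex; but a triangle has three vertices, so it has at least two small vertices, hence at least two sides joining small points — yet we just argued every small point has valence $2$ and sits alone in a digon, so no triangle can have two small vertices unless those two small points are the $x,y$ bounding some digon, which is impossible since $x,y$ are the \emph{distinct} endpoints a small point connects to and at least one of them must again be small, giving an infinite descent / immediate contradiction once $S$ has $\ge 4$ marked points. Cleanly: after removing all small points we get $T^\ast$ with vertex set the large points and \emph{no} arc between any two of them (independence), so $T^\ast$ has no arcs at all, which is absurd for a triangulation of a surface with $\ge 1$ marked point. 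Hence the assumption fails and some arc has both endpoints large, i.e. connected to $\ge 3$ distinct points each.

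The main obstacle I anticipate is making the ``every small point has valence exactly $2$ and sits in a digon'' step fully rigorous: one must carefully use that $T$ contains neither loops nor self-folded triangles (Definition of loop-free) and invoke Lemma~\ref{no2} to rule out double arcs, and handle the degenerate low-complexity configurations (e.g. when $x$ and $y$ nearly coincide combinatorially, or when a putative digon is actually a once-punctured region). A secondary subtlety is ensuring the contraction of digons genuinely produces a loop-free triangulation and does not accidentally create a loop or self-folded triangle; this needs the hypothesis that $S$ has at least $4$ marked points so that after contracting one digon at least $3$ marked points remain, keeping the intermediate triangulations honest. Once the local digon picture is secured, the global independence-plus-counting contradiction is immediate.
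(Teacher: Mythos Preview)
Your anticipated obstacle is genuine and breaks the argument: the claim that a small point has valence exactly $2$ is false. If $p$ is small with neighbours $x,y$, loop-freeness only forces the arcs at $p$ to \emph{alternate} between $x$ and $y$ (two consecutive arcs to the same vertex would put a loop on the far side of the enclosed triangle), so $p$ has even valence $2m$ --- but nothing pins $m$ down to $1$. For a four-point example, take the torus $\mathbb{R}^2/\mathbb{Z}^2$ with punctures $p=(0,0)$, $x=(\tfrac12,0)$, $y=(0,\tfrac12)$, $z=(\tfrac12,\tfrac12)$; as arcs take all horizontal and vertical segments of length $\tfrac12$, together with the diagonal joining the copies of $p$ and $z$ in each of the four small squares. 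This is loop-free, and $x$ has valence $4$ while being connected only to $p$ and $z$. (Here $p$ and $z$ are large and adjacent, so your global contradiction hypothesis is not met in this particular example --- but your justification of ``valence $2$'' invokes only loop-freeness, not that hypothesis, so the step fails as written.) Once the digon picture collapses, both the contraction $T\to T^\ast$ and the counting/descent paragraph have nothing to stand on. The strategy is salvageable: under the full hypothesis one can show that for any small $s$ the triple of vertices of any triangle through $s$ is forced to be the vertex set of \emph{every} triangle, giving $|V|=3$; but that is a different and longer argument than what you sketched.

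By contrast, the paper's proof is a three-line direct construction with no contradiction at all: take any triangle (three distinct vertices, since $T$ is loop-free) and adjoin neighbouring triangles one at a time; the first time a fourth vertex $d$ appears, each endpoint of the arc shared at that step is adjacent to the other endpoint, to the remaining original vertex, and to $d$.
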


  \begin{proof}

Take any triangle of $T$, it has three distinct vertices. Then add iteratively neighboring triangles. By adding a neighboring triangle, we either preserve the set of vertices, or introduce a new vertex. The first time we introduce the fourth vertex, the common side of the new and old triangles satisfies the requirements of the lemma.
    \end{proof}

    In Section~\ref{closed}, by a {\em digon} of a loop-free triangulation we mean two triangles glued along two sides, see Fig.~\ref{fig-digon}; by {\em poles} of a digon we mean its vertices incident to more than two arcs.

\begin{figure}[!h]
\begin{center}
\includegraphics[width=0.08\linewidth]{./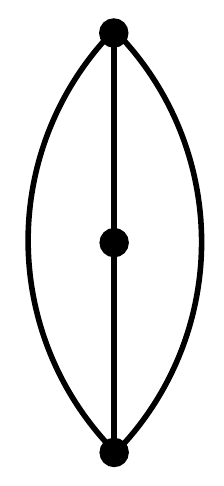}
  \caption{A digon}
\label{fig-digon}
\end{center}
\end{figure}

    \begin{lemma}
      \label{reduce}
      Let $T$ be a loop-free triangulation not containing digons, and suppose that a marked point $b$ is incident to at least four arcs of $T$, while being connected to at least three distinct marked points. Then there exists a loop-free flip in an arc incident to $b$ such that in the resulting triangulation $b$ is connected to at least three distinct marked points.    

      \end{lemma}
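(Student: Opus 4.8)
\textbf{Proof proposal for Lemma~\ref{reduce}.}

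The plan is to set up the local combinatorics around the point $b$ and to find an arc incident to $b$ whose flip both keeps the triangulation loop-free and does not disconnect $b$ from two of its neighbours. First I would list the arcs incident to $b$ in cyclic order, say $a_1,\dots,a_k$ with $k\ge 4$, and record the other endpoint $v(a_i)$ of each; the hypothesis that $b$ is connected to at least three distinct marked points means that $\{v(a_1),\dots,v(a_k)\}$ contains at least three distinct points. Consecutive arcs $a_i,a_{i+1}$ bound a triangle $\Delta_i$ of $T$ whose third side joins $v(a_i)$ to $v(a_{i+1})$; since $T$ has no self-folded triangles and no loops, we have $v(a_i)\ne b$ for every $i$. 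Flipping $a_i$ replaces it by the other diagonal of the quadrilateral $\Delta_{i-1}\cup\Delta_i$, i.e. by an arc joining $v(a_{i-1})$ to $v(a_{i+1})$; after the flip $b$ is still incident to the $k-1$ arcs $a_j$, $j\ne i$, so it stays connected to all of $\{v(a_j):j\ne i\}$.

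The key point is therefore to choose $i$ so that (a) $\{v(a_j):j\ne i\}$ still has at least three distinct elements, and (b) the flip of $a_i$ produces no loop (equivalently, $v(a_{i-1})\ne v(a_{i+1})$, and the new arc is not isotopic to an existing one), and that the resulting triangulation has no self-folded triangle. For (a): if some value among the $v(a_i)$ occurs only once, pick $i$ so that $v(a_i)$ is \emph{not} that value; if every value occurs at least twice, then deleting any single $a_i$ leaves all three values present, so any $i$ works for (a). For (b): $v(a_{i-1})=v(a_{i+1})$ would mean the two quadrilateral ``sides'' from $b$ land at the same point, and since $T$ has no digons (here is where the no-digon hypothesis enters) this configuration, which would make the flip create a loop, can be ruled out — or, when it cannot be avoided for a particular $i$, I would shift the choice of $i$ using that $k\ge 4$ gives us at least four candidate arcs to flip. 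Combining the two requirements is a short case analysis on how the (at least three) distinct endpoint-values are distributed among the cyclically ordered arcs $a_1,\dots,a_k$; with $k\ge 4$ there is always enough room to satisfy both.

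The main obstacle I expect is the interaction of conditions (a) and (b): an arc that is ``safe'' to flip (no loop created) might be exactly the one whose endpoint is the unique representative of some marked point, so flipping it would drop $b$ below three distinct neighbours, while the arcs that preserve three neighbours might be the ones whose flip creates a loop. Resolving this requires exploiting the no-digon assumption together with $k\ge 4$: the no-digon hypothesis guarantees that consecutive pairs $v(a_{i-1}),v(a_{i+1})$ are not forced to coincide in a way that creates loops, and $k\ge 4$ gives enough arcs that the two ``bad'' sets of indices cannot cover everything. I would finish by checking that the flipped triangulation has no self-folded triangle either — which again follows because a self-folded triangle after the flip would force a digon or a loop configuration before it, contradicting the hypotheses — so the flip is genuinely loop-free in the sense of the paper's definition.
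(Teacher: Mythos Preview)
Your setup and overall approach match the paper's: both list the endpoints of arcs at $b$ in cyclic order and argue by cases. Two points in your specific resolution need correction.

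First, your rule for condition (a) --- ``if some value occurs only once, pick $i$ so that $v(a_i)$ is not that value'' --- fails when two values each occur once. For instance with $k=4$ and cyclic pattern $x,z,y,z$, choosing $i=3$ (value $y$) avoids $x$ but drops $b$ to two neighbours. The paper's move is sharper: if $a_1$ is a value appearing once, flip the \emph{neighbouring} arc $ba_2$. Then $b$ stays connected to $a_1,a_3,a_4$, which are pairwise distinct ($a_1$ is unique; $a_3\ne a_4$ since consecutive endpoints always differ in a loop-free $T$), and the new arc has endpoints $a_1\ne a_3$, so the flip is loop-free.

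Second, the no-digon hypothesis does \emph{not} exclude $v(a_{i-1})=v(a_{i+1})$: two arcs from $b$ may well land at the same vertex without a digon being present. In fact the paper's argument never invokes the no-digon hypothesis. For the remaining case (every endpoint value repeats), the paper simply picks $i$ with $a_i,a_{i+1},a_{i+2}$ pairwise distinct --- such $i$ exists, since at least three distinct values occur and consecutive ones differ, so the cyclic word cannot have period two --- and flips $ba_{i+1}$. The new arc joins $a_i\ne a_{i+2}$, and $b$ remains connected to all of $a_i,a_{i+1},a_{i+2}$ (the middle one through its second occurrence). This single choice handles your (a) and (b) simultaneously, so the interaction you flag as the main obstacle never actually arises.
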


      \begin{proof}
        Let $a_1,\dots,a_k$ be the endpoints of arcs incident to $b$, there is a natural cyclic order on them ($a_i$ is connected to $a_{i+1}$, and $a_k$ is connected to $a_1$). Assume that some of them (say, $a_1$) appears in the list only once. Then the flip in $ba_2$ is loop-free, and $b$ is connected to distinct vertices $a_1$, $a_3$ and $a_4$.

        If none of $a_i$ appears only once, then choose $i$ such that  $a_i,a_{i+1}$ and $a_{i+2}$ are all distinct (such $i$ clearly exists), and flip in $ba_{i+1}$.
        
        \end{proof}

        \begin{remark}
          \label{rem-reduce}
          Note that after an application of the flip described in Lemma~\ref{reduce} the number of arcs incident to $b$ decreases. Also, the proof of Lemma~\ref{reduce} can be easily adjusted to find {\em two} distinct flips with the required property.

          \end{remark}

     \begin{lemma}
     \label{make_digon}
Let $S$ be a closed surface with at least four marked points, let $T$ be a loop-free triangulation of $S$. Then there exists a sequence of loop-free flips resulting in a triangulation containing a digon.

     \end{lemma}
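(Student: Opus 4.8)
The plan is to start from an arbitrary loop-free triangulation $T$ of the closed surface $S$ and, using loop-free flips only, drive it to a triangulation that contains a digon. If $T$ already contains a digon we are done, so assume it does not. By Lemma~\ref{edge} there is an arc of $T$ whose two endpoints are each connected to at least $3$ distinct marked points; in particular there is a marked point $b$ connected to at least $3$ distinct marked points. If $b$ happens to be incident to exactly $3$ arcs, then (since $T$ has no self-folded triangles and no digons) the three triangles around $b$ already form the local picture of a "star", and flipping any one of the three arcs at $b$ produces a digon with $b$ as one pole — so that case is immediate. The substantive case is therefore when every marked point that is connected to $\ge 3$ distinct points is incident to $\ge 4$ arcs.

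In that case I would iterate Lemma~\ref{reduce}. Fix the point $b$ provided by Lemma~\ref{edge}: it is connected to $\ge 3$ distinct marked points and incident to $\ge 4$ arcs, and $T$ contains no digon. Lemma~\ref{reduce} then produces a loop-free flip in an arc at $b$ after which $b$ is still connected to $\ge 3$ distinct marked points; by Remark~\ref{rem-reduce} the number of arcs incident to $b$ strictly decreases. Apply Lemma~\ref{reduce} repeatedly. As long as the current triangulation contains no digon and $b$ remains incident to $\ge 4$ arcs, the hypotheses of Lemma~\ref{reduce} stay satisfied, so the process continues and the valence of $b$ keeps dropping. Since the valence is a positive integer that strictly decreases, the iteration must terminate, and it can only terminate for one of two reasons: either at some stage the triangulation acquired a digon — in which case we are finished — or $b$ became incident to exactly $3$ arcs while still connected to $3$ distinct marked points, and then, as in the opening paragraph, one more loop-free flip at $b$ creates a digon.

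The one point that needs care — and which I expect to be the main obstacle — is verifying that the flips supplied by Lemma~\ref{reduce} are genuinely loop-free throughout, i.e. that no intermediate triangulation develops a loop or a self-folded triangle that would block the next application of the lemma. Lemma~\ref{reduce} guarantees the individual flip it produces is loop-free, but to re-apply it we also need the resulting triangulation to have no digon (or else we are already done) and to keep $b$ connected to $\ge 3$ distinct points (which Lemma~\ref{reduce} asserts) and incident to $\ge 4$ arcs (otherwise we fall into the valence-$3$ endgame). So the bookkeeping reduces to: at each step either (a) a digon appeared, or (b) the valence-$3$ situation at $b$ is reached, or (c) the hypotheses of Lemma~\ref{reduce} hold again; and in case (c) the valence of $b$ has dropped by Remark~\ref{rem-reduce}. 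A short induction on the valence of $b$ then closes the argument. The only genuinely new content beyond Lemmas~\ref{edge} and~\ref{reduce} is the valence-$3$ base case, where one checks directly from the local picture around $b$ (three triangles, no self-folded triangle, no digon) that flipping an arc at $b$ identifies two sides of the resulting quadrilateral pattern into a digon.
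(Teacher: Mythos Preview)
Your proposal is correct and follows essentially the same route as the paper: use Lemma~\ref{edge} to find a vertex $b$ connected to at least three distinct points, iterate Lemma~\ref{reduce} (using Remark~\ref{rem-reduce} to guarantee the valence drops) until $b$ has valence~$3$, and then flip any arc at $b$ to create a digon. Your additional bookkeeping about the iteration possibly producing a digon early is harmless but not needed, and your verification of the valence-$3$ endgame is exactly what the paper leaves implicit.
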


     \begin{proof}
Assume that $T$ contains no digons. By Lemma~\ref{edge}, there is a marked point $b$ connected to at least three other marked points in $T$. If $b$ is incident to more than three arcs, then by iterated application of Lemma~\ref{reduce} we can reduce the number of arcs incident to $b$ to three. A flip in any of these three arcs produces a digon.       

\end{proof}

    \begin{definition}
      \label{associate}
      Given a surface $S$ with a loop-free triangulation $T$, we define a {\em digon removal} operation  by creating a new triangulated surface obtained by removing {\em one} digon from $T$ and identifying the sides of the removed digon.  
      
      We define an {\em associate surface} $\tilde S$ with an {\em associate triangulation} $\tilde T$ by iterated application of the digon removal operation until no digon is left (see also Remark~\ref{g0}).

\end{definition}

With every digon removal operation taking a surface $S'$ to a surface $S''$ we can associate a natural map $S'\to S''$, which sends the removed digon to its side by any continuous map (identical on the side itself).  We denote by $\pi_T$ the natural map $S\to \tilde S$ obtained as a composition of these maps.

\begin{remark}
  \label{g0}
  If $S$ is a punctured sphere, iterative applications of digon removals may result in a single segment, so we need to adjust the definition of the associate triangulation: if the applications of the digon removal operation yields a surface consisting of one digon only (with identified sides), we leave this digon intact.  
  \end{remark}

  \begin{remark}
    \label{well}
    Unless $S$ is a punctured sphere, the associate surface and triangulation are well defined, i.e. they do not depend on the order of digon removals applied. Indeed, one can note that any two digons do not overlap, so that removing two digons in any order leads to the same surface. Thus, the Diamond Lemma~\cite{N} implies the results of any two maximal sequences of digon removals will be homeomorphic.

If $S$ is a punctured sphere triangulated in a way so that there exists a sequence of digon removal operations resulting in a segment, then the Diamond Lemma assures that every maximal sequence of digon removals results in a segment. Thus, the associate triangulation in this case is always a single digon with identified sides.

    \end{remark}

    \begin{lemma}
      \label{digons}
Let $T$ be a loop-free triangulation of a closed surface $S$.
      \begin{itemize}
      \item[(a)]
        Let $\Delta$ be a triangle of $T$ with vertices $a_1,a_2,a_3$, and assume that there is a digon $D$ sharing the arc $a_1a_2$ with $\Delta$. Then there exists a sequence of loop-free flips changing $T$ inside the union of $\Delta$ and $D$ only, such that  the resulting triangulation of $D\cup\Delta$ has a digon with poles $a_2$ and $a_3$.
      \item[(b)]
The sequence of loop-free flips in (a) results in a triangulation $T'$ with the same associate triangulation $\tilde T$.

        \item[(c)]
        There exists an arc $\gamma$ of $T$ and a sequence of loop-free flips of $T$ resulting in a triangulation $T'$ containing the arc $\gamma$, such that the associate triangulations $\tilde T$ and $\tilde T'$ coincide, for any arc $\delta\ne\gamma$ of $T'$ the preimage $\pi^{-1}(\delta)$ consists of one arc of $T'$, and $\pi_{T'}^{-1}(\gamma)$ consists of a collection of digons consecutively attached to each other.

        \end{itemize}
  \end{lemma}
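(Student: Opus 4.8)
## Proof proposal for Lemma~\ref{digons}

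\textbf{Overall strategy.} The three parts are logically nested: (a) is a purely local statement about the $4$ arcs inside a region $D\cup\Delta$ homeomorphic to a punctured pentagon (a digon $D$ glued to a triangle $\Delta$ along one side), (b) is an immediate bookkeeping consequence of (a), and (c) is obtained by iterating (a) to ``slide'' all the digons of $T$ so that their poles line up along a single arc. So the plan is: first establish (a) by an explicit finite sequence of flips in a small region, noting carefully at each step that no loop or self-folded triangle is created; then deduce (b) by observing that the flips of (a) never touch an arc outside $D\cup\Delta$ and never change which arcs are ``digon arcs'' outside this region; then prove (c) by an inductive ``combing'' argument using (a) repeatedly.

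\textbf{Part (a).} Label the vertices so that $\Delta$ has vertices $a_1,a_2,a_3$ with $D$ glued along $a_1a_2$; since $D$ is a digon, one of its two vertices, say $p$, is a pole (incident to arcs other than the two sides of $D$) and the other, which must be one of $a_1,a_2$ \emph{or} a fresh puncture $q$ of valence $2$ sitting inside $D$, is the non-pole vertex. I would split into the (few) combinatorial cases for where the non-pole vertex of $D$ sits relative to $\{a_1,a_2\}$, and in each case write down the explicit short sequence of flips. The model move is: flip the shared arc $a_1a_2$; this turns the quadrilateral $D\cup\Delta$ (with the digon's interior arc still present) into a configuration where the digon has been transported to be glued along $a_2a_3$ or $a_1a_3$; possibly one further flip of the digon's interior arc puts the poles exactly at $a_2,a_3$. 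The key point to verify is loop-freeness of every intermediate triangulation: since $S$ is closed and $T$ is loop-free, the arcs $a_1a_2, a_2a_3, a_1a_3$ are genuine (non-loop) arcs, the three vertices $a_1,a_2,a_3$ are distinct, and one checks directly that no flip among these finitely many configurations produces a loop or self-folded triangle (a double arrow would appear in the quiver only if two loops bounded an annulus, excluded by Lemma~\ref{no2}). Because $D\cup\Delta$ is bounded by arcs of $T$ that are not flipped, all these flips are honest flips of the global triangulation $T$.

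\textbf{Part (b).} The flips of (a) are supported inside $D\cup\Delta$, hence every arc of $T$ outside $D\cup\Delta$ survives unchanged in $T'$, and conversely $T'$ contains all such arcs. Inside $D\cup\Delta$ the digon $D$ is merely relocated: the number of digons of $T$ contained in (or overlapping) $D\cup\Delta$ is unchanged, and the complement of all digons inside $D\cup\Delta$ is $\Delta$ itself both before and after. Therefore the multiset of digons of $T'$ agrees with that of $T$ away from this region and has the ``same'' digon in it, so performing digon removals on $T'$ versus $T$ yields homeomorphic surfaces with identified triangulations; by Remark~\ref{well} (the Diamond Lemma) the associate triangulation is well defined and $\tilde T'=\tilde T$.

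\textbf{Part (c): the main obstacle.} This is where the real work is. The idea is: the preimage $\pi_T^{-1}$ of a given arc $\delta$ of $\tilde T$ is a string of digons attached end-to-end along $\delta$ (the digon-removal map collapses each digon to one of its sides). I want a single arc $\gamma$ of $\tilde T$ to ``absorb'' \emph{all} the digons, i.e.\ to arrange that every digon of $T'$ lies over $\gamma$. Pick any arc $\gamma$ of $T$ that is \emph{not} contained in a digon (so it descends to an arc, still called $\gamma$, of $\tilde T$); such an arc exists because $\tilde T$ is a genuine triangulation with at least one arc. Now I run an induction on the number of digons of $T$ that do not already sit over $\gamma$. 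Given such a digon $D$, look at where it lies: after possibly using (a) a bounded number of times I can assume $D$ is glued along a side of a triangle $\Delta$ one of whose \emph{other} sides lies ``closer'' to $\gamma$ (in the dual tree of $\tilde T$); applying (a) transports $D$ across $\Delta$, strictly decreasing its distance to $\gamma$ without changing $\tilde T$ (by (b)) and without creating loops. Repeating, each digon is marched along the dual tree until it is stacked over $\gamma$. The subtlety to handle carefully is that moving one digon across a triangle that \emph{already} has another digon on a different side requires first applying (a) to that obstructing digon, and one must check this process terminates — it does, by a suitable potential function such as the sum over digons of their tree-distance to $\gamma$, which (a) strictly decreases at each productive step. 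Once all digons lie over $\gamma$, every arc $\delta\neq\gamma$ of $T'$ has $\pi_{T'}^{-1}(\delta)$ a single arc (no digon lies over it), while $\pi_{T'}^{-1}(\gamma)$ is exactly the chain of all the digons attached consecutively, as claimed. Throughout, loop-freeness is inherited step-by-step from (a).
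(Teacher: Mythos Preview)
Your overall approach matches the paper's: in (a) two flips suffice (flip the common side of $D$ and $\Delta$ to obtain a once-punctured triangle with the interior puncture $q$ connected to all of $a_1,a_2,a_3$, then flip the arc $qa_1$), (b) follows since removing the single digon from either $T$ or $T'$ yields the same triangulation, and (c) is obtained by iterating (a) to slide all digons onto a fixed arc $\gamma$.

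Two points need correction. In (a) your description of the digon is garbled: a digon in the paper's sense (two triangles glued along two sides) has \emph{three} vertices, namely two poles---which here are necessarily $a_1$ and $a_2$, the endpoints of the shared side---and one interior puncture $q$ of valence $2$; there are \emph{two} interior arcs $qa_1,qa_2$, not one, and no case analysis on ``where the non-pole vertex sits'' is needed. In (c), the dual of $\tilde T$ is \emph{not a tree} when $g>0$, so your ``tree-distance'' potential is undefined; replace it by graph distance in the arc-adjacency graph of $\tilde T$ (arcs as vertices, edges between arcs sharing a triangle), which is connected, and note that each application of (a) moves a digon to an adjacent arc, so walking along a shortest path to $\gamma$ gives termination. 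The paper handles (c) more tersely, constructing $\gamma$ by explicitly lifting an arc of $\tilde T$ back through the sequence of digon removals and then simply asserting that iterated use of (a) collects all digons onto it.
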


  \begin{proof}
    To prove (a) observe that a flip in the common arc of $\Delta$ and $D$ produces a once punctured triangle with the puncture connected to all its vertices, so the punctured triangle is symmetric with respect to its three vertices. Now flipping the arc connecting the puncture to $a_1$ creates the digon with poles $a_2$ and $a_3$, see Fig.~\ref{moving_digon}.

\begin{figure}[!h]
\begin{center}
\includegraphics[width=0.5\linewidth]{./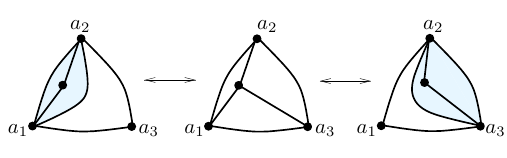}
  \caption{Moving a digon by loop-free flips.}
\label{moving_digon}
\end{center}
\end{figure}    

To prove (b), consider the sequence of two flips produced above. Removing the corresponding digon in $T$ and $T'$, we obtain the same triangulation. Since the associate triangulation is well-defined (see Remark~\ref{well}), this implies $\tilde T'=\tilde T$.

Finally, to prove (c), choose an arc $\gamma$ of $T$ as follows. Assume that $\tilde T$ is obtained from $T$ by $k$ digon removal operations $\pi_1,\dots,\pi_k$. Take any arc $\tilde\gamma$ of $\tilde T$, and consider its preimage under the last digon removal $\pi_k$. This preimage contains at most two arcs with the same endpoints as $\tilde\gamma$, choose one of them and denote it by  $\tilde\gamma_k$. Applying iteratively the same procedure $k$ times, we obtain an arc $\gamma=\tilde\gamma_1$ of $T$.

The choice of $\gamma$ guarantees that $\gamma$ is not an inner arc of any digon after applying digon removals. Now the statement of (c) is equivalent to all digons of $T'$  being consecutively attached to each other, with one of them having $\gamma$ as its side. This can be achieved by iterated application of (a).
 \end{proof}   

 \begin{remark}
  \label{standard}
As a corollary of Lemma~\ref{digons}(c), we may now assume that, after applying a sequence of loop-free flips, any loop-free triangulation $T$ can be obtained from its associate triangulation $\tilde T$ by cutting along one arc and gluing in several digons consecutively attached to each other.    
  \end{remark}

  \begin{remark}
    \label{edges}
It follows from the proof of Lemma~\ref{digons}(c) that the arc $\gamma$ in $T$ can be chosen in the preimage $\pi_T^{-1}$ of any of the arcs of $\tilde T$. In particular, the number of such arcs in $T$ is greater than one.

    \end{remark}

 \begin{lemma}
   \label{commute}

Let $S$ be a closed surface with a loop-free triangulation $T$ as in Remark~\ref{standard}, and let $\tilde T$ be the associate triangulation of $\tilde S$. Let $\tilde f$ be a loop-free flip of $\tilde T$. Then there exists a sequence of loop-free flips taking $T$ to a triangulation $T'$ such that $\tilde T'$ coincides with $\tilde f(\tilde T)$.

   \end{lemma}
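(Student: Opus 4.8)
\textbf{Proof plan for Lemma~\ref{commute}.}

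The plan is to lift the flip $\tilde f$ of $\tilde T$ to a flip (or short sequence of flips) of $T$, using the normal form of $T$ provided by Remark~\ref{standard}. Recall that in this normal form $T$ is obtained from $\tilde T$ by cutting along a single arc $\gamma$ of $\tilde T$ and gluing in a chain of digons $D_1,\dots,D_{m-3}$ along that cut; equivalently, $\pi_T^{-1}(\delta)$ is a single arc for every arc $\delta\neq\tilde\gamma$ of $\tilde T$, and $\pi_T^{-1}(\tilde\gamma)$ is the chain of digons. Let $\tilde a$ be the arc of $\tilde T$ being flipped by $\tilde f$, and let $\tilde\Delta_1,\tilde\Delta_2$ be the two triangles of $\tilde T$ adjacent to $\tilde a$, forming a quadrilateral $\tilde Q$.

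First I would handle the generic case $\tilde a\neq\tilde\gamma$ and, moreover, $\tilde\gamma$ is not a side of either $\tilde\Delta_1$ or $\tilde\Delta_2$. Then the whole quadrilateral $\tilde Q$ lifts isomorphically to a quadrilateral $Q$ in $T$ (none of its four sides nor its diagonal $\tilde a$ is subdivided by digons), and the single flip of $T$ in the arc $\pi_T^{-1}(\tilde a)$ is loop-free: it creates no loop because the lifted configuration is combinatorially a sub-triangulation of a quadrilateral, hence of finite type $A_3$ locally, and it contains no self-folded triangle. After this flip the new triangulation $T'$ still has the digon chain glued along $\gamma$ (which was untouched), so $\pi_{T'}$ exists and $\tilde T' = \tilde f(\tilde T)$ by construction. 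The second case is when $\tilde\gamma$ \emph{is} a side of $\tilde\Delta_1$ or $\tilde\Delta_2$ (but still $\tilde a\neq\tilde\gamma$). Here the lift of $\tilde Q$ to $T$ has one side (or two, if $\tilde\gamma$ appears twice on the boundary of $\tilde Q$) replaced by the digon chain. In this situation I would first use Lemma~\ref{digons}(a) repeatedly to slide the entire digon chain \emph{out} of the quadrilateral $\tilde Q$ — i.e. move it to sit along an edge of $T$ whose image in $\tilde T$ is disjoint from $\tilde Q$ — which is possible provided $\tilde T$ has an arc outside the closure of $\tilde Q$; since $S$ has at least four features and $\tilde S$ correspondingly has at least three marked points with $\tilde T$ a genuine triangulation, such an arc exists unless $\tilde S$ is very small, a case one checks by hand. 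By Lemma~\ref{digons}(b) these slides preserve the associate triangulation. Once the digon chain is disjoint from $\tilde Q$, we are back in the generic case and flip as before.

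The remaining case is $\tilde a = \tilde\gamma$, i.e. we are asked to flip the very arc along which the digons are glued. Here I would first apply Lemma~\ref{digons}(c) (together with Remark~\ref{edges}, which says the normal-form arc may be chosen over \emph{any} arc of $\tilde T$, and in particular $\tilde T$ has more than one arc in the preimage, so there is a choice other than $\tilde\gamma$) to re-normalize $T$ so that the digon chain is glued along a different arc $\gamma'$ of $\tilde T$ with $\pi_T(\gamma')\neq\tilde a$; this again preserves $\tilde T$. Then $\tilde a = \tilde\gamma$ is no longer the gluing arc and we reduce to one of the two previous cases. The main obstacle I anticipate is the bookkeeping in the second case: verifying that the digon chain can always be slid out of the quadrilateral $\tilde Q$ without creating a loop at any intermediate step, and dealing with the low-complexity exceptional surfaces $\tilde S$ (few marked points, small genus) where $\tilde Q$ might cover all of $\tilde T$ or where $\tilde\gamma$ is forced to be a side of $\tilde\Delta_i$. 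For those, I would argue directly using the explicit description of loop-free triangulations of surfaces with three marked points from Section~\ref{three} (Lemma~\ref{3-exists} and Remark~\ref{all3}), checking the finitely many local pictures by hand.
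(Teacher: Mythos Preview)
Your proposal is correct and follows essentially the same strategy as the paper --- split into cases according to the relation between the flipped arc $\tilde a$ and the digon-carrying arc $\tilde\gamma$, and in the bad case use Lemma~\ref{digons}(c) together with Remark~\ref{edges} to relocate the digon chain before flipping --- but you have introduced an unnecessary middle case that makes the argument longer and creates the very obstacles you then worry about.

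The paper's proof has only two cases: either $\pi_T^{-1}(\tilde a)$ is a single arc, or it is the digon chain. In the first case one simply flips that arc in $T$, regardless of whether $\tilde\gamma$ happens to be a side of the surrounding quadrilateral. The point you are missing is that replacing the side $\tilde\gamma$ of $\tilde Q$ by the outermost arc of the digon chain does not change the four corner vertices of the quadrilateral (the poles of the chain coincide with the endpoints of $\tilde\gamma$), so the flip of $\pi_T^{-1}(\tilde a)$ in $T$ produces an arc with exactly the same endpoints as $\tilde f(\tilde a)$; since $\tilde f$ is loop-free, so is this flip. After the flip the digon chain is untouched and still sits over $\tilde\gamma$, which remains an arc of $\tilde f(\tilde T)$, so $\tilde T'=\tilde f(\tilde T)$ immediately. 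Thus your case~2 collapses into case~1, and with it the bookkeeping about sliding the chain out of $\tilde Q$ and the worry about small $\tilde S$ where $\tilde Q$ might exhaust $\tilde T$ both disappear. Your case~3 is exactly the paper's second case.
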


   \begin{proof}
Let $\tilde \gamma$ be the arc of $\tilde T$ changed by flip $\tilde f$. Then $\pi^{-1}(\tilde \gamma)$ is either an arc of $T$ or a union of digons. If $\pi^{-1}(\tilde \gamma)$ is an arc, then the flip in this arc produces the required triangulation $T'$. Otherwise, due to Remark~\ref{edges}, we can apply  Lemma~\ref{digons}(c) to create a triangulation with the same associate triangulation $\tilde T$ but with $\pi^{-1}(\tilde \gamma)$ being a single arc.    

     \end{proof}

     \begin{lemma}
\label{ass-3}
Let $S$ be a closed surface with a loop-free triangulation $T$ as in Remark~\ref{standard}. Then there exists a sequence of loop-free flips taking $T$ to $T'$ such that $\tilde T'$ has precisely three vertices.

       \end{lemma}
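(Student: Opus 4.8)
The plan is to use the structural normal form from Remark~\ref{standard} together with the ``commutation'' statement of Lemma~\ref{commute} to push the problem down to the associate surface, where Lemma~\ref{3-exists} and Remark~\ref{all3} describe triangulations with three punctures completely. First I would observe that $\tilde S$ is a closed surface with the same genus $g$ as $S$, and with a number $\tilde m$ of punctures satisfying $3\le \tilde m\le m$, where $m$ is the number of punctures of $S$; indeed each digon removal decreases the number of punctures by exactly one, and by Lemma~\ref{make_digon} together with the digon-removal construction the process only stops once $\tilde T$ has no digons, which by Lemma~\ref{edge} forces $\tilde m\le 3$, hence $\tilde m=3$ unless $\tilde S$ already had three punctures to begin with. (The punctured-sphere caveat of Remark~\ref{g0} is handled separately: there $\tilde T$ is a single digon with identified sides, i.e.\ effectively $\tilde m$ small, and the argument below degenerates harmlessly.)

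Next, I would set up an induction on $\tilde m$, the number of vertices of the current associate triangulation. If $\tilde m=3$ there is nothing to prove. If $\tilde m\ge 4$, then by Lemma~\ref{make_digon} applied \emph{to the associate surface} $\tilde S$ with triangulation $\tilde T$, there is a sequence of loop-free flips of $\tilde T$ producing a triangulation $\tilde T_1$ of $\tilde S$ containing a digon. Now I would invoke Lemma~\ref{commute} repeatedly: each individual loop-free flip $\tilde f$ of the associate triangulation can be realized ``upstairs'', i.e.\ there is a sequence of loop-free flips of $T$ producing a triangulation still of the form in Remark~\ref{standard} whose associate triangulation is $\tilde f(\tilde T)$. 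Composing these over the whole sequence yields a triangulation $T_1$ of $S$, again in the normal form of Remark~\ref{standard}, with associate triangulation $\tilde T_1$ containing a digon $D$. Removing $D$ from $\tilde T_1$ (a digon removal on the associate surface) and correspondingly modifying $T_1$ — concretely, by first using Lemma~\ref{digons}(a) to move $D$ next to an arc, flipping it away, and re-collecting the remaining digons of $T_1$ along a single arc via Lemma~\ref{digons}(c) — produces a triangulation $T_2$ of $S$ in normal form whose associate triangulation has one fewer vertex. By the induction hypothesis applied to $T_2$, we can continue until the associate triangulation has exactly three vertices, which gives the desired $T'$.

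The step I expect to be the main technical obstacle is making precise the ``realize a downstairs flip upstairs, then perform a downstairs digon removal upstairs'' manoeuvre so that at every stage the triangulation of $S$ stays in the normal form of Remark~\ref{standard} and every intermediate flip is genuinely loop-free. The potential trouble is bookkeeping the preimage $\pi^{-1}(\tilde\gamma)$ of the flipped arc: if it is a chain of digons rather than a single arc, one must first use Lemma~\ref{digons}(c) (and Remark~\ref{edges}, which guarantees there is more than one arc over any arc of $\tilde T$ to move the digons onto) to convert it into a single arc without disturbing the associate triangulation, and one has to check this re-arrangement never forces a flip through a configuration creating a loop. All of this is already packaged in Lemma~\ref{commute} and Lemma~\ref{digons}, so the argument is essentially a careful composition of those lemmas with the inductive reduction of $\tilde m$; the content is organizational rather than geometric. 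Finally I would note that the edge cases — $S$ a punctured sphere, or $S$ already having three punctures so that $\tilde T=\tilde T$ trivially — are covered either by Remark~\ref{g0} or by taking the empty sequence of flips.
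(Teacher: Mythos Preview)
Your core argument (the induction in the second paragraph, lifting loop-free flips from $\tilde S$ to $S$ via Lemma~\ref{commute} and applying Lemma~\ref{make_digon} on the associate surface to reduce $\tilde m$ by one) is correct and is exactly the paper's proof, just written out in more detail.

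However, your first paragraph contains a false claim that you should delete: ``the process only stops once $\tilde T$ has no digons, which by Lemma~\ref{edge} forces $\tilde m\le 3$''. Lemma~\ref{edge} does not say this, and the implication is simply wrong --- a loop-free triangulation with no digons can have arbitrarily many vertices (take any generic triangulation of a many-punctured sphere). Lemma~\ref{make_digon} says one can \emph{flip} to create a digon when $\tilde m\ge 4$, not that one already exists; so the absence of digons in $\tilde T$ gives no bound on $\tilde m$. Indeed, if your claim were true the lemma would be vacuous, and you implicitly acknowledge this by then setting up the non-trivial induction. Just drop the first paragraph (or keep only the harmless observation $3\le\tilde m\le m$) and start directly with the induction; what remains is a correct proof matching the paper's.
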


       \begin{proof}
         Assume that $S$ has more than three marked points. Applying Lemma~\ref{make_digon} (if needed) and taking the associate triangulation, we obtain a loop-free triangulation of a surface $\tilde S$ with a smaller number of marked points. Lemma~\ref{commute} assures that we can continue this procedure: applying Lemma~\ref{make_digon} on $\tilde S$ can be considered as a sequence of loop-free flips on $S$. Iterated application of this procedure results in a triangulation with precisely three vertices.    
         
         \end{proof}

         \subsection{Regluing polygons}
         \label{glue}
In this section we define an operation of regluing polygons, which will be used later to change the combinatorial type of a loop-free triangulation of a surface with four marked points by loop-free flips.

         Let $P$ be a $(4g+2)$-gon with an identification of sides resulting in a twice punctured surface $S$ of genus $g$, such that every side of $P$ has two distinct endpoints. Define an {\em admissible regluing} of $P$ as the following two-step procedure.
         \begin{itemize}
         \item Cut $P$ along a diagonal of $P$ with endpoints representing distinct points in $S$.
           \item Attach the two obtained polygons along any pair of identified sides of $P$ (without changing the orientation.
           \end{itemize}
The result of the procedure is again a $(4g+2)$-gon with another identification of sides resulting in the same surface $S$.

 \begin{lemma}
               \label{regluing}
    Let $P$ be a $(4g+2)$-gon with an identification of sides resulting in a twice punctured surface of genus $g$, such that every side of $P$ has two distinct endpoints. There exists a sequence of admissible regluings resulting in a polygon in which all pairs of opposite sides are identified.            

               \end{lemma}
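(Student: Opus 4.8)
\textbf{Proof proposal for Lemma~\ref{regluing}.}

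The plan is to argue by induction on the number of pairs of opposite sides of $P$ that are \emph{not} identified, showing that each admissible regluing can be used to increase the number of correctly-paired opposite sides by at least one. Label the sides of $P$ cyclically as $e_1,\dots,e_{4g+2}$, so that ``opposite'' means $e_i$ is paired with $e_{i+2g+1}$ in the target configuration. Suppose we are not yet in the desired form; then there is some side, say $e_1$, which is glued in $P$ to a side $e_j$ with $j\ne 2g+2$. The strategy is to perform a cut along a suitable diagonal of $P$ and reattach the two pieces along the pair $\{e_1,e_j\}$, so that in the new polygon $e_1$ and (the image of) $e_j$ become a genuine pair of opposite sides, while not destroying any pairs of opposite sides that were already correctly identified.

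The key steps, in order, are as follows. First, I would set up notation carefully for an admissible regluing: given a diagonal $\delta$ of $P$ with distinct endpoints in $S$, cutting along $\delta$ splits $P$ into two polygons $P_1,P_2$ each having $\delta$ as an edge, and reattaching along an identified pair $\{e_i,e_{i'}\}$ (with $e_i$ in $P_1$, $e_{i'}$ in $P_2$, or one of them equal to $\delta$ after a relabelling) produces a $(4g+2)$-gon again — here one must check that the result still has $4g+2$ sides, that opposite sides have distinct endpoints, and that the surface is unchanged; this last point is immediate since cutting and regluing along curves already identified in $S$ does not change $S$. Second, I would show that we may always choose the cutting diagonal $\delta$ and the regluing pair so that $e_1$ and $e_j$ end up in opposite position: concretely, take $\delta$ to be the diagonal separating the arc of the boundary between $e_1$ and $e_j$ (one way around) from the arc the other way around, and reglue along $\{e_1,e_j\}$ itself; a direct count shows $e_1$ and its partner are now antipodal. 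Third — the bookkeeping step — I would verify that this operation does not spoil previously correct opposite pairs: any pair $\{e_a,e_b\}$ that was already opposite and identified either lies entirely in $P_1$ or entirely in $P_2$ (choosing $\delta$ to avoid separating such pairs, which is possible because correctly-paired opposite sides come in a nested/parallel configuration), so it survives the regluing in opposite position. Then induction finishes the proof.

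The main obstacle I anticipate is the third step: controlling what happens to the \emph{other} side-identifications under a single regluing, and in particular guaranteeing that a cut exists which fixes the chosen bad pair $\{e_1,e_j\}$ without breaking any of the good pairs. This requires understanding the combinatorial structure of side-pairings of a $2k$-gon giving a genus-$g$ surface — essentially a chord-diagram analysis — and showing that the ``good'' chords and the chosen ``bad'' chord can be simultaneously respected by some cutting diagonal. A secondary technical point is the hypothesis that every side has two distinct endpoints (no side is a loop): I would need to check this is preserved, or that it can be restored, after each regluing, since the cutting diagonal is required to have distinct endpoints and the reattachment could in principle create a side whose two ends are the same marked point; handling this may require choosing $\delta$ and the regluing pair with a little extra care, or observing that the configurations where it fails are exactly those already closer to the target form.
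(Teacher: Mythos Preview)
Your inductive invariant --- the number of already-opposite identified pairs --- cannot be maintained in the way you describe, and the argument breaks at the bookkeeping step you flagged.

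First, a smaller point: when you cut along $\delta$ and reattach along the identified pair $\{e_1,e_j\}$, the sides $e_1$ and $e_j$ disappear from the boundary of the new polygon (they become an interior arc of the surface). The new identified pair of sides is the two copies of $\delta$. So the quantity you want to control is whether those two copies of $\delta$ are antipodal, which constrains the position of the cut and is not automatic.

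The real obstruction is your claim that ``correctly-paired opposite sides come in a nested/parallel configuration'' and can therefore be kept on one side of the cut. The opposite is true: two sides that are antipodal in a $(4g+2)$-gon, viewed as a chord, form a diameter, and \emph{any two diameters cross}. Hence any diagonal $\delta$ you choose will separate all but at most a few of the good pairs; for $g\ge 2$ you cannot keep them all in one piece. Even if a good pair $\{e_a,e_{a+2g+1}\}$ survives with one side in each piece, the cyclic positions of all sides are shuffled by the regluing, and there is no reason the pair remains antipodal in the new polygon. So a single regluing can destroy many good pairs while creating one, and the induction does not go through.

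The paper avoids this by tracking a different monotone quantity. It labels identified sides with the same letter (so the boundary is a cyclic word of length $4g+2$ in $2g+1$ letters, each appearing twice) and observes that the target configuration is exactly the one whose word is $ww$ for some $w$ of length $2g+1$. The inductive claim is: if the current word contains disjoint subwords $e_{i_1}e_{i_2}\cdots e_{i_k}$ and $e_{i_2}\cdots e_{i_k}$, then one admissible regluing produces a word containing two disjoint copies of some length-$k$ subword. This is robust under regluing because it is a statement about the word, not about absolute positions; the proof is a short case analysis on where the second occurrence of $e_{i_1}$ sits relative to the two subwords, and in each case an explicit diagonal and regluing pair are exhibited. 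Repeating increases $k$ until $k=2g+1$.
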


               \begin{proof}
                 Label the sides of $P$ by $e_1,\dots, e_{2g+1}$, where the identified sides have the same label. If we read the labels clockwise in the target polygon, we get the same word repeated twice. Now read the labels clockwise in $P$, they produce a word $w$. We prove the following claim: if $W$ contains a word $e_{i_1}\dots e_{i_k}$ and also a separate word $e_{i_2}\dots e_{i_k}$, then there exists an admissible regluing such that the resulting word contains two copies of a word of length $k$. Applying the claim repeatedly, we obtain the statement of the lemma.

                 To prove the claim, observe that the words  $e_{i_1}\dots e_{i_k}$ and $e_{i_2}\dots e_{i_k}$ divide the remaining sides of $P$ into two parts.

                 Assume first that the second side labeled $e_{i_1}$ lies between $e_{i_k}$ and $e_{i_2}$ in clockwise order (see Fig.~\ref{glue1} for an example with $k=2$), denote its endpoints by $a_{i_1}$ and $a_{i_1}'$ (in clockwise order). Similarly, denote by $a_{i_2}$ and $a_{i_2}'$ the endpoints of the closest to $a_{i_1}$ side labeled $e_{i_2}$ in clockwise order (see Fig.~\ref{glue1}).  

\begin{figure}[!h]
\begin{center}
\includegraphics[width=0.7\linewidth]{./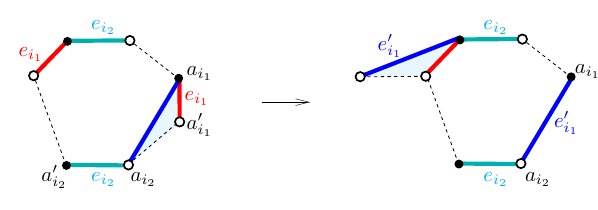}
  \caption{To the proof of Lemma~\ref{regluing}: the case of the second side labeled $e_{i_1}$ lying between $e_{i_k}$ and $e_{i_2}$ in clockwise order.}
\label{glue1}
\end{center}
\end{figure}

Note that vertices  $a_{i_1}$ and  $a_{i_2}$ represent distinct points in the surface (otherwise the arc labeled $e_{i_2}$ is a loop). Cut $P$ along the diagonal $a_{i_1}a_{i_2}$ and glue the two parts along the two sides labeled $e_{i_1}$, this is an admissible regluing. In the obtained polygon the corresponding word contains two copies of word  $e_{i_1}'e_{i_2}\dots e_{i_k}$,  see Fig.~\ref{glue1}.

Assume now that the second side labeled $e_{i_1}$ lies between $e_{i_k}$ and $e_{i_1}$ in clockwise order (see Fig.~\ref{glue2}, left, for an example with $k=2$), denote its endpoints by $a_{i_1}$ and $a_{i_1}'$ (in clockwise order). Denote by $\bar a_{i_1}$ and $\bar a_{i_1}'$ the endpoints of the {\em other} side labeled $e_{i_1}$ (in clockwise order, see Fig.~\ref{glue2}).  Vertices  $a_{i_1}$ and  $\bar a_{i_1}$ represent distinct points in the surface.

\begin{figure}[!h]
\begin{center}
\includegraphics[width=0.9\linewidth]{./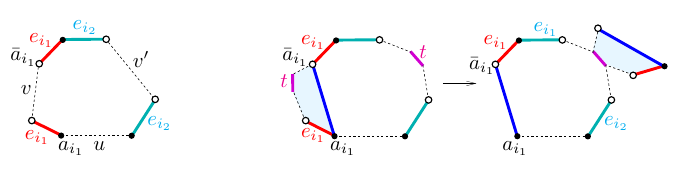}
  \caption{To the proof of Lemma~\ref{regluing}: the case of the second side labeled $e_{i_1}$ lying between $e_{i_k}$ and $e_{i_1}$ in clockwise order. Notation (left), the first case (right).}
\label{glue2}
\end{center}
\end{figure}

Denote the word (read clockwise) between $e_{i_2}\dots e_{i_k}$ and $e_{i_1}$ by $u$,   between $e_{i_1}$ and $e_{i_1}e_{i_2}\dots e_{i_k}$ by $v$, and between $e_{i_1}\dots e_{i_k}$ and $e_{i_2}\dots e_{i_k}$ by $v'$, see Fig.~\ref{glue2}. 

We now need to consider several cases.

In the first case, we assume that there exists a side labeled $t$ inside word $v$ such that its pair belongs to $v'$. Then we can do an admissible regluing by cutting $P$ along $a_{i_1}\bar a_{i_1}$ and regluing along $t$, which results in the case treated previously (as in Fig.~\ref{glue1}). 

In the second case, we assume that there exists a side labeled $t$ inside word $v$ such that its pair belongs to $u$, see Fig.~\ref{glue3}, left. Then choose any vertex $b$ inside word $v'$ representing the same point in the surface as $a_{i_1}$, and do an admissible regluing by cutting $P$ along $a_{i_1}b$ and regluing along $t$. Again, this  results in the case treated previously (as in Fig.~\ref{glue1}). 

In the last case we assume that for every side inside word $v$ its pair also belongs to $v$. Choose the side in $v$ with vertex $\bar a_{i_1}$ (let its label be $t$), denote by $b$ its second endpoint, and denote by $c$ the vertex of $P$ preceding $a_{i_1}$ clockwise. Then we can do an admissible regluing by cutting $P$ along $bc$ and regluing along $t$,  Fig.~\ref{glue3}, right. In the resulting polygon, the word between $e_{i_1}$ and $e_{i_1}e_{i_2}\dots e_{i_k}$ is a proper subword of $v$. If the polygon falls into one of the first two cases, then we proceed accordingly. Otherwise, we can repeat the procedure to shorten the word between $e_{i_1}$ and $e_{i_1}e_{i_2}\dots e_{i_k}$. Since the length of this word is always at least two, this completes the proof of the claim (and thus the lemma).

\end{proof}

\begin{figure}[!h]
\begin{center}
\includegraphics[width=0.99\linewidth]{./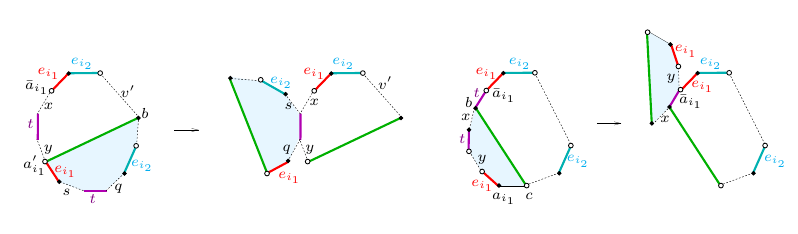}
  \caption{To the proof of Lemma~\ref{regluing}: the case of the second side labeled $e_{i_1}$ lying between $e_{i_k}$ and $e_{i_1}$ in clockwise order. The second (left) and the third (right) cases.}
\label{glue3}
\end{center}
\end{figure}

         \subsection{Surfaces with four marked points}
         \label{four}

         The aim of this section is to show that all loop-free triangulations of a closed surface with precisely four marked points are equivalent under sequences of loop-free flips.

         Let $S$ be a closed surface with precisely four marked points with any loop-free triangulation. Applying Lemma~\ref{make_digon} (if needed), we obtain a loop-free triangulation $T$ containing a digon, so its associate triangulation $\tilde T$  is described in Remark~\ref{all3}.

         \begin{lemma}
\label{double-wheel}
Let $P$ be a once punctured $(4g+2)$-gon with an identification of sides resulting in a thrice punctured surface $\tilde S$ of genus $g$, such that every side of $P$ has two distinct endpoints. Let $\tilde T$ be a triangulation of $\tilde S$ containing all sides of $P$ and all arcs connecting the puncture inside $P$ with vertices of $P$. Consider a surface $S$ with four marked points and a loop-free triangulation $T$ containing a digon, such that the associate triangulation of $T$ is precisely $\tilde T$.

Then for any two vertices $a_i$ and $a_j$ of $P$ representing distinct points of $\tilde S$ there exists a sequence of loop-free flips taking $T$ to a triangulation $T'$ of $S$ containing all sides of $P$ and an arc connecting $a_i$ and $a_j$.   
           \end{lemma}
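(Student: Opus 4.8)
The plan is to work with the associate surface $\tilde S$ and the associate triangulation $\tilde T$, and to reconstruct $S$ from $\tilde S$ by the recipe of Remark~\ref{standard}: after applying a suitable sequence of loop-free flips we may assume $T$ is obtained from $\tilde T$ by cutting along one arc $\gamma$ and gluing in a chain of consecutively attached digons. By Remark~\ref{edges} the arc $\gamma$ can be taken to lie in the preimage of \emph{any} prescribed arc of $\tilde T$; in particular we are free to choose which arc of $\tilde T$ carries the digon chain, and we may reposition the single digon of $T$ (here $S$ has four marked points, so $\tilde S$ has three and there is exactly one digon) onto any arc of $\tilde T$ using Lemma~\ref{digons}(a),(b), which keeps the associate triangulation fixed.

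First I would reduce to the case where the target vertices $a_i,a_j$ are the endpoints of one of the arcs of $\tilde T$. Since $\tilde T$ is the ``wheel'' triangulation of Remark~\ref{all3} — all sides of $P$ together with all spokes from the interior puncture $v$ to the vertices of $P$ — its arcs connect either two boundary vertices of $P$ adjacent along $\p P$, or a boundary vertex to $v$. If $\{a_i,a_j\}$ is already such a pair we are essentially done after placing the digon appropriately. Otherwise $a_i,a_j$ are two non-adjacent vertices of $P$; here I would invoke Lemma~\ref{regluing}: the polygon $P$ can be admissibly reglued, by a sequence of cuts along diagonals with distinct endpoints and regluings along identified sides, into a polygon $P'$ in which all pairs of opposite sides are identified, and more to the point each individual admissible regluing of $P$ can be realized inside $S$ by a sequence of loop-free flips. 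The point of an admissible regluing is that it replaces one spoke-triangulation of $P$ by the spoke-triangulation of a different polygon $P'$ representing the same $\tilde S$; realizing it by flips on $S$ (not just $\tilde S$) is done exactly as in Lemma~\ref{commute} — whenever the cut diagonal must be flipped, first slide the lone digon off it via Lemma~\ref{digons}(a). After enough such regluings the diagonal $a_ia_j$ we want becomes a \emph{side} of the reglued polygon, hence an arc of the new associate triangulation, and one more application of the adjacent-case argument produces the desired arc on $S$.

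The key mechanism throughout is the commutation between flips on $S$ and flips/regluings on $\tilde S$: every elementary move on $\tilde S$ (flip of a spoke, or cut-and-reglue of $P$) lifts to loop-free flips on $S$ provided the digon is first moved out of the way of any arc about to be flipped, which is always possible by Lemma~\ref{digons}(a)--(b) since moving the digon does not change $\tilde T$. The main obstacle I anticipate is bookkeeping the digon's position and verifying loop-freeness at each lifted flip: one must check that sliding the digon along and performing the flips never transiently creates a loop or self-folded triangle on $S$, and that the hypothesis ``every side of $P$ has two distinct endpoints'' (so all diagonals used are honest, non-loop arcs) is preserved under admissible regluings — this is where the distinct-endpoints conditions in Lemma~\ref{regluing} and in the statement are used. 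Once that is in hand, composing the flip sequences (digon-slide, lift of regluing, digon-slide back, repeat, then the final adjacent-vertices step) yields the triangulation $T'$ of $S$ containing both all sides of $P$ and an arc joining $a_i$ to $a_j$.
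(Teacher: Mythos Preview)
Your plan has a circularity problem. You propose to realize admissible regluings of $P$ by loop-free flips on $S$, invoking Lemma~\ref{commute}. But Lemma~\ref{commute} lifts a \emph{loop-free flip of $\tilde T$} to a sequence of loop-free flips of $T$, and as Remark~\ref{all3} records, the wheel triangulation $\tilde T$ of a thrice-punctured closed surface admits \emph{no} loop-free flips at all. An admissible regluing is not a flip of $\tilde T$; it replaces the entire wheel over $P$ by the wheel over a different polygon $P'$, and the mechanism for realizing that on $S$ by loop-free flips is precisely Corollary~\ref{glue-flips}, which the paper derives \emph{from} Lemma~\ref{double-wheel}. So you are assuming what the lemma is meant to supply. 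There is also a secondary issue: even granting the regluings, you would end up with a triangulation containing the sides of the reglued polygon $P'$, not of the original $P$, whereas the conclusion asks for all sides of $P$ together with the diagonal $a_ia_j$.

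The paper's argument avoids all of this and is much more direct. It does not touch Lemma~\ref{regluing}. Instead it uses the fourth marked point $c$ (the one inside the single digon) as a second hub. After sliding the digon onto a side of $P$ inside the sub-polygon $P_1$ cut out by the spokes $ba_i$ and $ba_j$, one flips the inner side of the digon and then successively flips the spokes of $P_1$ adjacent to the triangles containing $c$; this turns $P_1$ into a wheel centred at $c$ rather than at $b$. A final flip of the arc $bc$ (whose two adjacent triangles are $bca_i$ and $bca_j$) produces the diagonal $a_ia_j$. All sides of $P$ lie on the boundary of $P_1\cup P_2$ and are never flipped, so they survive. This is the content of Fig.~\ref{2wheel}. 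The key idea you are missing is that the extra puncture $c$ is not just baggage to be slid out of the way --- it is the tool that creates the diagonal.
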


           \begin{proof}

             Triangulation $T$ consists of triangulation $\tilde T$ with one arc substituted by a digon. By Lemma~\ref{moving_digon}, we may assume that the digon is attached to a side $\gamma$ of $P$, see Fig.~\ref{2wheel}. Denote by $b$ the puncture inside $P$ in $\tilde S$ (and $S$), and by $c$ the puncture inside the digon. The arcs $ba_i$ and $ba_j$ split $P$ into two polygons, we may assume that $c$ is contained in one of them (call it $P_1$). Performing a flip in the side of the digon distinct from the side of $P$, and then in all diagonals of $P_1$ bounding triangles with vertex $c$, we obtain a triangulation of $P_1$ in which $c$ is connected to all vertices of $P_1$, see Fig.~\ref{2wheel} for an example. Finally, applying a flip in the arc connecting $b$ and $c$ we obtain a required triangulation.

      \end{proof}          

\begin{figure}[!h]
\begin{center}
\includegraphics[width=0.99\linewidth]{./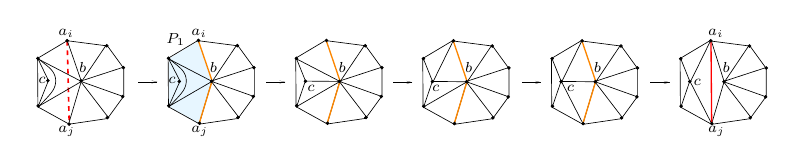}
  \caption{To the proof of Lemma~\ref{double-wheel}.}
\label{2wheel}
\end{center}
\end{figure}

We can reformulate Lemma~\ref{double-wheel} as follows.

             \begin{cor}
               \label{glue-flips}
               Let $(P_1,T_1)$ and $(P_2,T_2)$ be two pairs satisfying the assumptions of Lemma~\ref{double-wheel}, i.e. both $P_1$ and $P_2$ are $(4g+2)$-gons, but the identifications of sides are different. Assume that $P_2$ can be obtained from $P_1$ by an admissible regluing. Then $T_2$ can be obtained from $T_1$ by a sequence of loop-free flips.

               \end{cor}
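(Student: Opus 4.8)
\textbf{Proof proposal for Corollary~\ref{glue-flips}.}
The plan is to unwind the definition of admissible regluing and to apply Lemma~\ref{double-wheel} at the single step where an arc is cut and the two pieces are reattached. Recall that an admissible regluing of $P_1$ consists of cutting $P_1$ along a diagonal $\delta$ whose endpoints $a_i,a_j$ represent distinct points of $\tilde S$, and then gluing the two resulting polygons back along some pair of identified sides of $P_1$. Thus both $P_1$ and $P_2$ are obtained from the \emph{same} pair of polygons $P',P''$ (the two pieces produced by cutting along $\delta$), glued along two different pairs of identified sides; equivalently, $P_1$ arises from $P_2$ by a (reverse) admissible regluing along the same diagonal $\delta$, now a side present in both polygons.

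First I would use Lemma~\ref{double-wheel} applied to $(P_1,T_1)$: since $a_i$ and $a_j$ represent distinct points of $\tilde S$, there is a sequence of loop-free flips taking $T_1$ to a triangulation $T_1'$ of $S$ that contains all sides of $P_1$ together with the arc $\delta=a_ia_j$. Now $\delta$ cuts $P_1$ into the two polygons $P'$ and $P''$, and $T_1'$ restricts to a triangulation of each; in particular $T_1'$ contains all the sides of $P'$ and of $P''$, and (by the structure of $T_1$ coming from $\tilde T$ with one arc replaced by a digon) it contains the wheel triangulations of these sub-polygons around the punctures, plus the extra digon sitting somewhere inside. The key observation is that the combinatorial data ``a triangulation of $S$ containing all sides of $P'$ and $P''$ glued along $\delta$'' does not record \emph{how} the remaining identified sides of $P_1$ are paired up outside $\delta$ — it only records the cell structure of $P'\cup_\delta P''$ together with the identifications forced by being a triangulation of $S$. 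Since $P_2$ is obtained from the same pieces $P',P''$ by regluing along a different pair of identified sides (not $\delta$), the triangulation $T_1'$ is \emph{simultaneously} a triangulation of $S$ containing all sides of $P_2$ and the arc $\delta$: the surface $S$ and the triangulation are unchanged, only the choice of fundamental polygon has been relabelled. Hence $T_1'$ already satisfies the hypotheses of Lemma~\ref{double-wheel} with respect to $(P_2,T_2)$.

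To finish, apply Lemma~\ref{double-wheel} once more, this time to $(P_2,T_1')$: there is a sequence of loop-free flips taking $T_1'$ to $T_2$ (or, if $T_2$ is specified only up to the placement of its digon, to a triangulation with the same associate triangulation which is carried to $T_2$ by loop-free flips via Lemma~\ref{digons}(a) and Lemma~\ref{moving_digon}). Composing the three flip-sequences — $T_1\to T_1'$, the relabelling (which involves no flips), and $T_1'\to T_2$ — gives the desired loop-free flip sequence from $T_1$ to $T_2$.

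The main obstacle I anticipate is the middle step: making precise the claim that $T_1'$ ``is also'' a triangulation adapted to $P_2$. One must check carefully that cutting $S$ along the arcs of $T_1'$ corresponding to the sides of $P_1$ and then regluing along a different pair of sides yields genuinely the polygon $P_2$ with its prescribed side identifications, i.e. that the regluing operation on polygons is compatible with the intrinsic triangulated surface, and that no loop is created at any stage (which is guaranteed because the cutting diagonal $\delta$ has distinct endpoints, exactly the admissibility hypothesis). Handling the digon bookkeeping — ensuring it can always be slid, via Lemma~\ref{moving_digon}, out of the way of every flip performed inside $P'$ or $P''$ — is routine but needs to be stated.
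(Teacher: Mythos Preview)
Your approach is essentially correct and is exactly what the paper intends: the corollary is stated there as a direct reformulation of Lemma~\ref{double-wheel} with no separate proof, and your argument spells out that reformulation.

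One phrasing issue to fix: $T_1'$ does not satisfy the \emph{hypotheses} of Lemma~\ref{double-wheel} (it is a double wheel with no digon, not a wheel-plus-digon), so you cannot literally ``apply Lemma~\ref{double-wheel} to $(P_2,T_1')$''. What is true is that $T_1'$ coincides with the \emph{output} of Lemma~\ref{double-wheel} applied to $(P_2,T_2)$ with the diagonal $e$ (the former side of $P_1$ that became interior to $P_2$), after possibly relocating the digon in $T_2$ via Lemma~\ref{digons}(a); hence the second step is a reversal of that flip sequence, which is legitimate since flips are involutions. The observation you correctly isolate --- that the double wheel $T_1'$ is simultaneously adapted to $P_1$ (separated by $\delta$) and to $P_2$ (separated by $e$), because the two halves $P',P''$ and their wheel centres $b,c$ are unchanged by the regluing --- is exactly why the corollary is a reformulation. (Minor: your reference ``Lemma~\ref{moving_digon}'' points to a figure, not a lemma; you mean Lemma~\ref{digons}(a).)
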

             
We are now ready to prove Proposition~\ref{trans} for closed surfaces with four marked points.
             
             \begin{lemma}
               \label{trans4}
      Let $S$ be a closed surface with precisely $4$ marked points, and let $T$ and $T'$ be two loop-free triangulations of $S$. Then there exists a sequence of loop-free flips taking $T$ to $T''$ which is combinatorially equivalent to $T'$.         

               \end{lemma}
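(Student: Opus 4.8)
The plan is to reduce an arbitrary loop-free triangulation of $S$ to a normal form depending only on the genus of $S$, and then to show that this normal form is unique up to loop-free flips and combinatorial equivalence. Write $g$ for the genus of $S$; it is a topological invariant and, as recorded in Section~\ref{to-3}, it is not changed by digon removal. First I would put $T$ and $T'$ into a convenient shape: by Remark~\ref{standard} and Lemma~\ref{ass-3}, after a sequence of loop-free flips we may assume that the associate triangulation $\tilde T$ has exactly three vertices and that $T$ is obtained from $\tilde T$ by cutting along one arc and gluing in a chain of consecutively attached digons. Since $S$ has $4=3+1$ marked points, this chain consists of a single digon. By Remark~\ref{all3}, $\tilde T$ is the wheel triangulation of a once-punctured $(4g+2)$-gon $P$ obtained by some side-pairing, and loop-freeness of $\tilde T$ forces every side of $P$ to have two distinct endpoints. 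Thus $T$, and likewise $T'$ with the same value of $g$, is precisely of the type treated in Lemma~\ref{double-wheel}.

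Next I would standardize the polygon. Forgetting the interior puncture, $P$ is a $(4g+2)$-gon whose side-pairing produces a twice-punctured closed surface of genus $g$ and all of whose sides have distinct endpoints, so Lemma~\ref{regluing} supplies a sequence of admissible regluings carrying $P$ to the fixed polygon $P_0$ in which all pairs of opposite sides are identified. By Corollary~\ref{glue-flips}, each admissible regluing is realized by a sequence of loop-free flips of the four-marked surface, and the triangulation it produces is again a wheel triangulation (of the reglued polygon) with a single digon attached; in particular the hypotheses of Lemma~\ref{double-wheel} are reinstated before the next regluing step is performed. Applying this to $T$ and to $T'$ separately, we obtain loop-free flip sequences taking $T$ to a triangulation $T_0$ and $T'$ to a triangulation $T_0'$, each of which is the wheel triangulation of the \emph{same} polygon $P_0$ with one digon inserted.

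To finish, note that the wheel triangulation of $P_0$ is uniquely determined by $P_0$. Using Lemma~\ref{digons}(a) repeatedly (which leaves the associate triangulation unchanged, by Lemma~\ref{digons}(b)), move the single digon of $T_0$, and then that of $T_0'$, so that in each case it is attached along one fixed side of $P_0$. After these moves $T_0$ and $T_0'$ are built from identical combinatorial data, hence are combinatorially equivalent: there is a homeomorphism of $S$ carrying $T_0$ to $T_0'$. Since every loop-free flip is reversible, transporting the reversed flip sequence from $T'$ to $T_0'$ through this homeomorphism produces a sequence of loop-free flips from $T_0$ to a triangulation combinatorially equivalent to $T'$; prepending the flip sequence from $T$ to $T_0$ yields the required sequence of loop-free flips from $T$ to a triangulation combinatorially equivalent to $T'$.

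The step I expect to be the main obstacle is the middle one: one must make sure that each individual admissible regluing really does lift to loop-free flips of $S$ and that the ``wheel over a good $(4g+2)$-gon, plus one digon'' structure survives every regluing, so that Lemma~\ref{double-wheel} and Corollary~\ref{glue-flips} can be applied again and again --- this is where the interplay between moving the digon (Lemma~\ref{digons}(a)) and the cut-and-reglue operation has to be checked carefully. A secondary point needing attention is the degenerate case $g=0$ (the four-punctured sphere), where ``$P$'' is a bigon and one should either run the same argument with this degenerate polygon or verify the claim directly; one should also record that $T$ and $T'$ end up over the same polygon $P_0$, which is automatic since they have the same genus.
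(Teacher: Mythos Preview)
Your proposal is correct and follows essentially the same approach as the paper: create a digon so that the associate triangulation is a wheel over a $(4g+2)$-gon, use Lemma~\ref{regluing} together with Corollary~\ref{glue-flips} to bring the polygon to the opposite-side-identified form, and then use Lemma~\ref{digons} to align the single digon. The paper's proof is slightly more economical in that it invokes Lemma~\ref{make_digon} directly rather than going through Remark~\ref{standard} and Lemma~\ref{ass-3}, but the logic is the same; note also that your ``main obstacle'' is precisely what Corollary~\ref{glue-flips} is designed to guarantee, since its conclusion lands you back in a pair satisfying the hypotheses of Lemma~\ref{double-wheel}, so the iteration is immediate.
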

            
               \begin{proof}
                 Apply Lemma~\ref{make_digon} (if needed) to transform $T$ to a loop-free triangulation containing a digon, and consider the associate triangulation, it is represented by a punctured polygon $P$ with some identification of sides (see Remark~\ref{all3}). Our aim is to transform $P$ to a polygon with all opposite sides identified. By Lemma~\ref{regluing}, this can be done by a sequence of admissible regluings. By Corollary~\ref{glue-flips}, this sequence can be realized as a sequence of loop-free flips. Following the same procedure, we can transform $T'$ to the same shape. Now, using Lemma~\ref{digons}, we can also assume that the digons in both triangulations have poles in the same vertices of $P$, which implies that the triangulations are combinatorially equivalent.

                 \end{proof}

  \subsection{Proof of Proposition~\ref{trans} for closed surfaces}
         \label{trans-closed}                

We can now prove Proposition~\ref{trans} for closed surfaces with at least four marked points.

Consider a loop-free triangulation $T$, we may assume that the number of marked points is at least five, otherwise Lemma~\ref{trans4} applies. Applying Lemma~\ref{ass-3}, transform  $T$ to a triangulation with three vertices $a_1,a_2,a_3$ and a bunch of digons consecutively attached to each other. Choose one of the two digons attached to triangles with vertices $a_1,a_2,a_3$, and denote the marked point inside this digon by $a_4$. Apply a flip in the common side of the digon and the triangle (so that $a_4$ is now connected to all of $a_1,a_2,a_3$) to obtain a new triangulation $T_1$, and consider the associate triangulation $\tilde T_1$ with four vertices. Do the same for triangulation $T'$ to get a triangulation $\tilde T_1'$ with four vertices. Applying Lemma~\ref{trans4} to $\tilde T_1$ and $\tilde T_1'$, we can assume that they are combinatorially equivalent. Now, by Lemma~\ref{commute}, we can consider sequences of flips of  $\tilde T_1$ and $\tilde T_1'$ as sequences of loop-free flips of  $T_1$ and $T_1'$ respectively, so we transformed $T$ and $T'$ to two triangulations whose associate triangulations coincide. Now, apply Lemma~\ref{digons} to make sure the digons in both triangulations are also positioned at the same place.


\section{Loop-free triangulations of bordered surfaces}
\label{bordered}

In this section, we modify the proof of Proposition~\ref{trans} for closed surfaces presented in Section~\ref{closed} to make it work for bordered surfaces with at least $4$ {\em features} (recall that by a {\em feature} we mean either a puncture or a boundary component). We proceed mainly according to the same plan.

\subsection{New types of digons}
\label{new-digons}

The aim of this section is to show that we can treat a bordered surface $S$ as a closed surface, where some of the arcs are substituted with new types of digons as shown in Fig.~\ref{fig-digons-b}.

\begin{figure}[!h]
\begin{center}
\includegraphics[width=0.6\linewidth]{./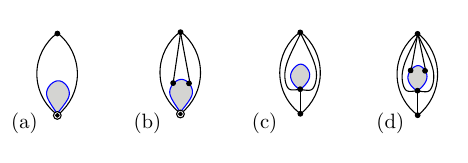}
  \caption{New types of digons: rooted digons (a),(b) and free digons (c),(d). The shaded area is a hole. The number of boundary marked points can be arbitrary. Roots of rooted digons are encircled.}
\label{fig-digons-b}
\end{center}
\end{figure}

We will call the pieces shown in Fig.~\ref{fig-digons-b}(a,b) {\em rooted digons}, and the pieces shown in Fig.~\ref{fig-digons-b}(c,d) (as well as the usual digons as in Fig.~\ref{fig-digon}) {\em free digons}. As before, the {\em poles} of a digon are the endpoints of its exterior arcs. A rooted digon has a distinguished pole which we call a {\em root}, see Fig.~\ref{fig-digons-b}. The difference in notation is caused by the properties: free digons will be treated in the same way as usual digons in closed surfaces, while rooted digons require special attention.  

\begin{lemma}
  \label{inside}
  Let $D$ be an annulus. Then for any two loop-free triangulations $T_1$ and $T_2$ of $D$ there exists a sequence of loop-free flips taking $T_1$ to a triangulation combinatorially equivalent to $T_2$.  

  \end{lemma}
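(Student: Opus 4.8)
The plan is to describe explicitly all loop-free triangulations of an annulus $D$ and show they form a single connected component under loop-free flips. Recall that an annulus has two boundary components, carrying, say, $p\ge 1$ and $q\ge 1$ marked points. A key observation is that any arc of a loop-free triangulation of $D$ is either a \emph{bridge} joining the two boundary components, or a \emph{boundary-parallel} arc with both endpoints on the same boundary component cutting off a disc. First I would count: a loop-free triangulation must contain at least one bridge (otherwise one boundary component would be enclosed by an arc homotopic to it, forcing a loop or a self-folded configuration in the remaining annular region), and cutting $D$ along a single bridge yields a disc with $p+q+2$ marked points on the boundary (the two endpoints of the bridge appearing twice). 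Conversely, any triangulation of that disc together with the bridge gives a loop-free triangulation of $D$, provided no triangulated diagonal of the disc becomes a loop — which, by a small case check, never happens here since the two copies of each bridge-endpoint are separated by at least one other marked point on each side as soon as $p+q\ge 3$; the degenerate cases $p=q=1$ (the annulus with two marked points, excluded from loop-freeness by Lemma~\ref{no2} anyway) and $(p,q)=(1,2)$ or $(2,1)$ I would treat by hand.

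The main step is then: (i) reduce an arbitrary loop-free triangulation $T_1$ of $D$ to one containing a \emph{single} bridge, and (ii) among triangulations with a single bridge, move between any two by loop-free flips. For (i), suppose $T$ contains $k\ge 2$ bridges; they cut $D$ into $k$ discs (``sectors''). Pick two consecutive bridges $\beta,\beta'$ bounding a sector $\Sigma$; if $\Sigma$ contains a marked point in its interior-of-boundary strictly between $\beta$ and $\beta'$ on at least one side, a flip inside $\Sigma$ removes one of the two bridges from the triangulation, reducing $k$; one checks this flip is loop-free because the sector has enough marked points. If every sector is ``thin'' (a single triangle or a chain of triangles with no spare marked points), then $p=q=k$ and a slightly more careful sequence of flips — rotating one bridge past a marked point of the opposite boundary — still decreases the bridge count; here I expect the bookkeeping to be the most delicate part of the argument, since one must be sure never to create a boundary-parallel loop during the rotation. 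For (ii), once $T_1$ and $T_2$ each have a single bridge $\beta_1$, $\beta_2$, I would first use flips inside the disc $D\setminus\beta_1$ (which is simply a polygon, where all triangulations are connected by flips, all automatically loop-free since the polygon has $\ge 5$ vertices) to bring $\beta_2$ (viewed as a diagonal) into the triangulation; then $D$ cut along $\beta_2$ is the relevant polygon and a final sequence of polygon flips matches $T_1$ to $T_2$. Combinatorial equivalence rather than equality is all that is needed, which absorbs the ambiguity of which of the two boundary marked points is ``first''.

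The hard part will be the thin-sector case in step (i): controlling the rotation of a bridge around the annulus without ever passing through a configuration that is not a loop-free triangulation. I would handle this by always keeping at least two bridges present during the rotation (so the complement stays a union of discs, never an annulus that could force a loop), and only at the very end performing the flip that drops the bridge count; tracking that each intermediate quadrilateral being flipped is non-degenerate (its four vertices, counted with the arcs of $T$, are not forced to coincide) is the routine-but-fiddly verification. With the annulus fully understood, Lemma~\ref{inside} then serves in Section~\ref{bordered} as the base case allowing free and rooted digons to be ``moved'' and ``collapsed'' by loop-free flips, exactly paralleling the role of the plain-digon lemmas (Lemmas~\ref{digons},~\ref{moving_digon}) in the closed case.
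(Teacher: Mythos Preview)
Your strategy runs in the opposite direction from the paper's: you aim to \emph{minimise} the number of bridging arcs (down to one) and then work inside the resulting polygon, whereas the paper \emph{maximises} them by flipping away all peripheral arcs. The paper's argument is short: there is a natural partial order on peripheral arcs with endpoints on a given boundary component, and flipping a maximal peripheral arc always produces a bridging arc (hence the flip is loop-free); iterating yields a triangulation consisting only of bridges. The quiver of any bridging triangulation is a non-oriented cycle with $p$ arrows one way and $q$ the other, and all such quivers are sink/source equivalent by~\cite{CK}; since sink/source mutations correspond precisely to flips between bridging triangulations, those flips are loop-free. This bypasses your thin-sector case entirely.

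More importantly, there is a genuine gap in your step~(ii), which you present as the easy part. After cutting $D$ along the single bridge $\beta_1$ you obtain a polygon with $p+q+2$ vertices in which the two copies $a,a'$ of one endpoint are non-adjacent whenever $p\ge 2$ (and likewise $b,b'$ whenever $q\ge 2$). The diagonal $aa'$ is then a legitimate diagonal of the polygon, but it becomes a loop in $D$ after regluing. Your assertion that polygon flips are ``all automatically loop-free since the polygon has $\ge 5$ vertices'' conflates loop-freeness in the polygon with loop-freeness in $D$: a flip path between two polygon triangulations that individually avoid $aa'$ and $bb'$ may very well pass through a triangulation containing one of them. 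To salvage your argument you would have to show that the set of polygon triangulations avoiding both forbidden diagonals is itself flip-connected, and you have not addressed this. (Incidentally, your earlier sentence about the two copies being ``separated by at least one other marked point on each side'' has the implication backwards: separation is exactly what makes the bad diagonal available.)
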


  \begin{proof}
We first recall the standard terminology: an arc in $D$ is {\em bridging} if its endpoints belong to different boundary components of $D$, and {\em peripheral} otherwise. There is a partial order on peripheral arcs with endpoints on a given boundary component ($\gamma>\delta$ if cutting $D$ along $\gamma$ yields a disc containing $\delta$). Flipping a maximal peripheral arc decreases the number of peripheral arcs, so one can always transform any triangulation to one containing bridging arcs only (we call such triangulations {\em bridging triangulations}) using loop-free flips. Now, for any two bridging triangulations there is a sequence of loop-free flips taking one of them to a triangulation combinatorially equivalent to the other: a quiver of such triangulation consists of a non-oriented cycle with $m$ and $n$ arrows pointing in opposite directions (where $m,n$ are the numbers of marked points on boundary components), and all such quivers are sink-source equivalent by~\cite{CK}. As a sink-source mutation corresponds to a flip taking a bridging triangulation to another bridging triangulation, all flips are loop-free.

    \end{proof}

    \begin{remark}
      \label{egal}
Due to Lemma~\ref{inside}, we may adjust the definition of the free digon by ignoring the way the domain shown in Fig.~\ref{fig-digons-b} is triangulated (without loops).      
        
      \end{remark}

      \begin{remark}
        \label{egal-rooted}
        Similarly to Remark~\ref{egal}, for a given rooted digon we can ignore the way the corresponding domain is triangulated: all loop-free triangulations can be connected by a sequence of loop-free flips. Indeed, this is equivalent to the following: given an unpunctured  disk with at least three marked points with one chosen short diagonal $\alpha$, the set of triangulations not containing $\alpha$ is connected with respect to flips. The latter fact can be seen as follows. Denote by $b$ the marked point cut off by $\alpha$, then the set of triangulations not containing $\alpha$ is precisely the set of triangulations containing a diagonal incident to $b$. Take two triangulations from the set, let them contain diagonals $ba$ and $bc$ respectively. Then there exists a triangulation containing both $ba$ and $bc$. Now, by the result of~\cite{FST} the set of triangulations containing a fixed diagonal is connected. Therefore, both triangulations can be transformed  by flips within the set to the one containing both diagonals $ba$ and $bc$.      

        \end{remark}

\begin{lemma}
      \label{digons-b}
      Let $T$ be a loop-free triangulation of a bordered surface $S$. Let $\Delta$ be a triangle of $T$ with vertices $a_1,a_2,a_3$, and assume there is a digon $D$ sharing the arc $a_1a_2$ with $\Delta$. 
      \begin{itemize}
        \item[(a)]
      If $D$ is free or $D$ is rooted with root $a_2$, then there exists a sequence of loop-free flips changing $T$ inside the union of $\Delta$ and $D$ only, such that  the resulting triangulation of $D\cup\Delta$ has a digon with poles $a_2$ and $a_3$.
    \item[(b)]
      Let $D_1$ and $D_2$ be two digons sharing a side. Then there exists a sequence of loop-free flips
changing $T$ inside the union of $D_1$ and $D_2$ only, such that  the resulting triangulation contains $D_1$ and $D_2$ attached in opposite order (see Fig.~\ref{swap} for an example).
\end{itemize}

\end{lemma}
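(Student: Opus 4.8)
The plan is to reduce both parts of Lemma~\ref{digons-b} to the closed-surface result Lemma~\ref{digons}(a) by treating the interior of the free (resp.\ rooted) digon as a black box. The key observation, already secured by Remark~\ref{egal} and Remark~\ref{egal-rooted}, is that any two loop-free triangulations of the domain filling a free digon (resp.\ rooted digon) are connected by loop-free flips that stay inside that domain. Consequently, for the purposes of moving a digon around, a free digon behaves exactly like the usual digon of Section~\ref{closed}: it has two poles and an ``exterior'' pair of arcs, and whatever we do outside it, we can always re-triangulate its interior as needed without creating loops.

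For part~(a), I would argue as follows. First suppose $D$ is a free digon sharing the arc $a_1a_2$ with $\Delta$. Replace the interior of $D$ by a single arc $a_1a_2$ to obtain an auxiliary surface $S^\ast$ (topologically, collapse the hole/punctures in $D$); then $T$ induces a triangulation $T^\ast$ of $S^\ast$ which near $\Delta\cup D$ looks exactly like the closed-surface picture of Lemma~\ref{digons}(a), i.e.\ a triangle $\Delta$ with an ordinary digon glued along $a_1a_2$. Apply the two-flip sequence of Lemma~\ref{digons}(a): flip the common arc to get a ``once-punctured triangle'' (here, a triangle with the digon-core inside, symmetric in its three vertices by Remark~\ref{egal}), then flip the arc to $a_1$, producing a digon with poles $a_2$ and $a_3$. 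Each of these flips is loop-free on $S^\ast$, and since the flips only touch $\Delta\cup D$ and the digon interior can be triangulated arbitrarily by Remark~\ref{egal}, the same two flips (with the digon interior carried along) are loop-free flips on $S$. The case where $D$ is rooted with root $a_2$ is identical with Remark~\ref{egal-rooted} in place of Remark~\ref{egal}: the crucial point is that the root is \emph{not} the vertex $a_1$ that gets ``folded in'' by the flip, so after the two flips the root is still one of the poles of the new digon (namely $a_2$), and the rootedness is preserved. If instead the root were $a_1$ this procedure would fail, which is exactly why the hypothesis excludes it.

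For part~(b), the idea is to view $D_1\cup D_2$ as a hexagon-like region with the shared side in the middle, and to swap the two digons by a short explicit sequence of loop-free flips confined to $D_1\cup D_2$. Concretely, let the shared side be $e$, with $D_1$ on one side having poles $p,q$ and $D_2$ on the other having poles $q,r$ (so the poles along the chain are $p,q,r$). Flip $e$: since $D_2$ glued to one triangle of $D_1$ along $e$ forms, after collapsing interiors, a once-punctured quadrilateral / wheel configuration, one checks (as in the closed case, using Remarks~\ref{egal},~\ref{egal-rooted}) that a bounded sequence of flips inside the region re-expresses it with $D_2$ adjacent to the $p$-end and $D_1$ adjacent to the $r$-end, i.e.\ attached in the opposite order; Figure~\ref{swap} records the outcome. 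The point to verify at each step is that no flip creates a loop: this follows because every intermediate arc has its two endpoints among the distinct poles $p,q,r$ (or among distinct boundary marked points inside the digon interiors), and by Lemma~\ref{no2} none of the resulting sub-configurations forces a double arrow or self-folded triangle.

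The main obstacle I anticipate is bookkeeping the rooted case: one must be careful that the flips used to move or swap digons never disturb the root, and in part~(b) that swapping is still possible when one or both of $D_1,D_2$ is rooted (the roots must end up at the correct poles of the swapped configuration). This is a finite case check — free/free, free/rooted, rooted/free, rooted/rooted, with the root at $p$, $q$, or $r$ — but it needs to be done honestly, invoking Remark~\ref{egal-rooted} to re-triangulate interiors and checking loop-freeness of each elementary flip via Lemma~\ref{no2}. Everything else is a direct transcription of the closed-surface argument of Lemma~\ref{digons}(a).
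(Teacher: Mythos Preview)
Your treatment of part~(a) is correct and matches the paper's approach: the paper simply says the proof is ``similar to the one of Lemma~\ref{digons}(a)'' and refers to explicit figures (Figs.~\ref{move-d} and~\ref{move-b}) showing the flip sequences for free and rooted digons respectively. Your observation that the root must be at $a_2$ rather than $a_1$ so that it survives as a pole of the new digon is exactly the point.

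For part~(b), however, there are two issues. First, your geometric setup is off: if $D_1$ and $D_2$ share a side, that side is an exterior arc of both digons, so its two endpoints are the poles of \emph{both} $D_1$ and $D_2$. There are only two poles $p,q$ in the picture, not three; $D_1\cup D_2$ is a region bounded by two arcs from $p$ to $q$, with the shared side as a third $pq$-arc in the middle. Your chain-of-poles description $p,q,r$ and the phrase ``$D_2$ adjacent to the $p$-end and $D_1$ adjacent to the $r$-end'' do not describe the actual configuration, so the case analysis you propose would be built on the wrong picture.

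Second, and more importantly, you are working harder than necessary. The paper's proof of~(b) is a single sentence: ``Statement~(b) can be obtained by iterative applications of~(a).'' The idea is that a triangle of $D_2$ adjacent to the shared side plays the role of $\Delta$ in part~(a), so one pushes $D_1$ through $D_2$ one triangle at a time using~(a) repeatedly (Fig.~\ref{swap} illustrates this). This reduction absorbs all the rooted/free bookkeeping into the already-established part~(a), so no separate case check is required. You should replace your direct argument for~(b) with this reduction.
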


        \begin{proof}
          The proof of (a) is similar to the one of Lemma~\ref{digons}(a), see Figs.~\ref{move-d} and~\ref{move-b} for details.

          Statement (b) can be obtained by iterative applications of (a), see Fig.~\ref{swap}. 

        \end{proof}
        
\begin{figure}[!h]
\begin{center}
\includegraphics[width=0.52\linewidth]{./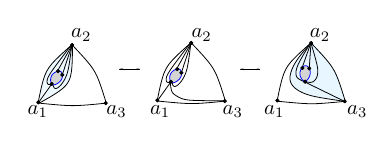}
  \caption{Moving a free digon by loop-free flips.}
\label{move-d}
\end{center}
\end{figure}

\begin{figure}[!h]
\begin{center}
\includegraphics[width=0.99\linewidth]{./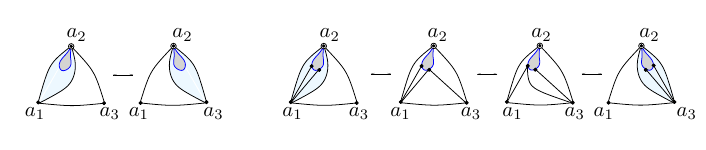}
  \caption{Moving a rooted digon by loop-free flips.}
\label{move-b}
\end{center}
\end{figure}

\begin{figure}[!h]
\begin{center}
\includegraphics[width=0.6\linewidth]{./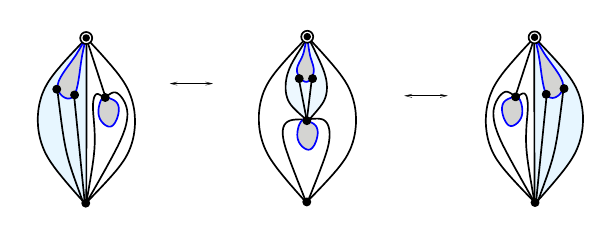}
  \caption{Swapping digons.}
\label{swap}
\end{center}
\end{figure}

\subsection{Bordered surfaces with three marked points}
\label{three-b}

\begin{lemma}
  \label{3-exists-b}
Let $S$ have at least $3$ marked points. Then there exists a loop-free triangulation of $S$. 
  
  \end{lemma}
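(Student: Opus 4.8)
The plan is to imitate the proof of Lemma~\ref{3-exists}: construct a loop-free triangulation for a small ``core'' configuration of marked points and then pass to the general case by inserting the remaining marked points one at a time. The elementary point behind the insertion step is that adding marked points preserves loop-freeness: in a loop-free triangulation, placing a new puncture in the interior of a triangle $\Delta$ and joining it to the three corners of $\Delta$ creates three triangles with no pair of identified sides and no new loop (each new arc joins the new puncture to an old vertex); similarly, inserting a new boundary marked point $x$ in a boundary segment $\sigma$ bounding a triangle $\Delta$ with opposite vertex $w$, and joining $x$ to $w$, splits $\Delta$ into two triangles, again without identified sides or new loops. Since deleting a puncture, and deleting a boundary marked point from a component carrying more than one, are the inverse operations, it suffices to produce a loop-free triangulation of $S$ after reducing $S$ --- keeping at least $3$ marked points in total and at least one on each boundary component throughout --- to a surface $S_0$ admitting no further such reduction; such an $S_0$ either has exactly three marked points, or has no punctures and exactly one marked point on each of its (at least three) boundary components.

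For the core surface $S_0$ one writes down an explicit loop-free triangulation, absorbing the genus into a polygon model exactly as in Lemma~\ref{3-exists}. If $S_0$ is closed this \emph{is} Lemma~\ref{3-exists}; a disc with at least three boundary marked points is triangulated by a fan, and an annulus is handled by Lemma~\ref{inside}. In the remaining cases one cuts $S_0$ along a maximal family of disjoint arcs to a polygon $\Pi$ whose sides are either boundary segments of $S_0$ or identified in pairs, with all punctures in the interior of $\Pi$; choosing the side-pairing so that no side of $\Pi$ has its two endpoints identified --- just as opposite-side identification achieves for the $(4g+2)$-gon in Lemma~\ref{3-exists} --- and triangulating $\Pi$ by joining each puncture to all vertices of $\Pi$ (and completing by a fan) produces a triangulation in which no arc is a loop, hence (a self-folded triangle being built on a loop) no triangle is self-folded. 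Alternatively, and in the spirit of this section, the holes may be introduced as free or rooted digons, which are themselves loop-freely triangulable by Remarks~\ref{egal} and~\ref{egal-rooted}.

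I expect the only genuine difficulty to arise in the core configurations with a boundary component carrying a single marked point, i.e.\ on a monogon with several punctures inside. There the boundary component is a loop based at the unique boundary marked point, and the triangle incident to it has a repeated vertex; recall that a once-punctured monogon admits \emph{only} the self-folded triangulation, hence no loop-free triangulation at all. One must therefore route the arcs near that boundary with care --- for instance by drawing two arcs from the boundary marked point to a single puncture, so that together with the boundary loop they bound an honest, non-self-folded triangle --- and it is precisely the assumption of at least three marked points, which forces at least two punctures once the boundary carries only one, that makes such a routing possible. With the core cases in hand, the insertion operations of the first paragraph finish the proof.
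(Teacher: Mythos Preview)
Your overall strategy---build a loop-free triangulation of a minimal ``core'' surface and then reinsert the remaining marked points one at a time---is exactly the paper's approach. The insertion steps you describe (puncture inside a triangle joined to its three corners; boundary point splitting a triangle in two) are the same as the paper's, up to one flip: the paper phrases the addition of a new puncture or a new boundary component as gluing in a free digon along an arc, which after flipping the diagonal of the digon gives precisely your picture.

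Where your write-up diverges from the paper is in the construction for the core surface. You propose a polygon model: cut $S_0$ to a polygon $\Pi$ with a side-pairing having no loop edges, then triangulate $\Pi$ by joining the punctures to the vertices. This is under-specified in two places. First, you assert one can always realise $S_0$ by a side-pairing with no side having identified endpoints, but you do not prove it for bordered surfaces (and it is not as immediate as the opposite-side pairing of a $(4g+2)$-gon). Second, ``joining each puncture to all vertices of $\Pi$'' produces crossings as soon as there is more than one puncture; in the core cases $b=1$ (two punctures) and $b=2$ (one puncture) you would need to say precisely which arcs are drawn, and your last paragraph only sketches this for the monogon-like case. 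The paper sidesteps both issues by working directly from the closed-surface triangulation of Lemma~\ref{3-exists} and then, for each boundary component, either substituting an arc by a rooted digon (one boundary point), cutting along an arc (two boundary points on one component), or deleting a triangle (all three points on one component). This is exactly the ``alternative'' you mention at the end of your second paragraph; making that the primary construction, rather than the polygon model, would turn your sketch into a complete proof matching the paper's.
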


\begin{proof}
  Consider first a surface with precisely $3$ marked points. Then we can take  a triangulation of the closed surface $S'$ of the same genus constructed in Lemma~\ref{3-exists} and amend it as follows. 
  
    Assume that some of the marked points belong to boundary components not containing other marked points. Then substituting an arc (one for each boundary component) in the triangulation of $S'$ with a rooted digon (see Fig.~\ref{3-b}) gives a required triangulation.

    If there is a boundary component with two marked points, then we can cut the triangulation of $S'$  along one arc, see Fig.~\ref{3-b}, right. Finally, if all marked points belong to the same component, then we remove one triangle from the triangulation of $S'$.

    Now, to add a marked point, glue a free digon with either a puncture or a boundary component along any arc (if the new marked point does not belong to existing boundary component), or subdivide a triangle with a boundary arc into two (otherwise).

  \end{proof}

\begin{figure}[!h]
\begin{center}
\includegraphics[width=0.6\linewidth]{./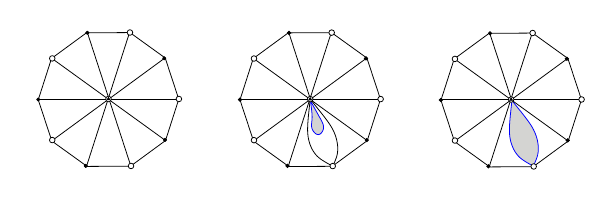}
  \caption{Constructing a loop-free triangulation of a bordered surface with three marked points from a triangulation of a closed surface.}
\label{3-b}
\end{center}
\end{figure}

As for closed surfaces, all loop-free triangulations of surfaces with $3$ marked points can be constructed as in Lemma~\ref{3-exists-b} with different identification of sides of the $(4g+2)$-gon.

\subsection{Reducing the number of features to $3$ on bordered surfaces}
\label{to-3-b}

We start with a change of triangulation in order to control the number of boundary marked points.

\begin{lemma}
  \label{boundary}
Let $T$ be a loop-free triangulation of a bordered surface $S$ with at least $4$ marked points, and assume that there is a boundary component with at least $3$ marked points. Then there exists a sequence of loop-free flips such that in the resulting triangulation at least one of the marked points of the boundary component is not incident to any non-boundary arc.     
  
  \end{lemma}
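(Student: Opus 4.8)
Here is the plan.

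\medskip

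\textbf{Setup.} Fix an arbitrary marked point $p$ on the distinguished boundary component $C$ (the one carrying $\ge 3$ marked points), and make $p$ incident to no non\nobreakdash-boundary arc, by induction on the number $k$ of non\nobreakdash-boundary arcs at $p$. Record the star of $p$ as a fan of triangles $T_1,\dots,T_{k+1}$, where $T_j$ sits between the consecutive edges $\gamma_{j-1},\gamma_j$ at $p$ and has third edge $e_j$; here $\gamma_1,\dots,\gamma_k$ are the non\nobreakdash-boundary arcs at $p$, $\gamma_0=\beta_L$ and $\gamma_{k+1}=\beta_R$ are the two boundary segments at $p$, $e_j$ joins the outer endpoints $v_{j-1}$ of $\gamma_{j-1}$ and $v_j$ of $\gamma_j$, and $v_0=q_L$, $v_{k+1}=q_R$ are the two neighbours of $p$ on $C$, which are distinct since $C$ has at least three marked points. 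The case $k=0$ is the assertion, so assume $k\ge 1$.

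\medskip

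\textbf{The generic move.} If $v_{i-1}\ne v_{i+1}$ for some $i\in\{1,\dots,k\}$, then flipping $\gamma_i$ replaces it by the arc $v_{i-1}v_{i+1}$, which is not a loop and, being the second diagonal of the genuine quadrilateral $T_i\cup T_{i+1}$ whose two pieces are honest triangles, is not isotopic to a boundary segment; hence the flip is loop\nobreakdash-free, and it leaves $p$ with only $k-1$ non\nobreakdash-boundary arcs, so one recurses. For $k=1$ this always applies, since $v_0=q_L\ne q_R=v_2$; this is the base of the induction.

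\medskip

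\textbf{The frozen case (the crux).} Here $v_{i-1}=v_{i+1}$ for every $i$, so the $v_j$ alternate, forcing $k$ even (otherwise $q_R=v_{k+1}=v_0=q_L$) and $v_0=v_2=\dots=v_k=q_L$, $v_1=\dots=v_{k+1}=q_R$; thus every $T_j$ has vertex set $\{p,q_L,q_R\}$ and every outer edge $e_j$ joins $q_L$ to $q_R$. Call an outer edge \emph{good} if it is an interior arc whose exterior triangle is not in the fan. A good outer edge exists: otherwise the interior outer edges (those with exterior triangle in the fan) are identified in pairs, so their number is even; but the fan has $k+1$ (odd) outer edges, so one of them must be the boundary segment $q_Lq_R$ (whence $C$ has exactly three marked points), and then every outer edge is either on $\partial S$ or identified with another outer edge, so no triangle outside the fan is adjacent to it, and by connectedness $S$ would be the fan alone, with only the vertices $p,q_L,q_R$ -- contradicting the hypothesis of at least four marked points. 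Fix a good outer edge $e_i$; its exterior triangle $T_i'$ has apex $x$ distinct from $q_L,q_R$ (else $T_i'$ has a repeated vertex, i.e.\ a loop) and from $p$ (else $T_i'$ lies in the star of $p$, hence in the fan). Now flip $e_i$ to the arc $\delta=px$: this inserts $\delta$ into the fan at $p$ between $\gamma_{i-1}$ and $\gamma_i$, with bounding triangles $(p,v_{i-1},x)$ and $(p,x,v_i)$, temporarily raising the count at $p$ to $k+1$. Then flip $\gamma_i,\gamma_{i+1},\dots,\gamma_k$ in this order, and then $\gamma_{i-1},\gamma_{i-2},\dots,\gamma_1$ in this order; at each step the quadrilateral being flipped has $x$ as a vertex opposite the current diagonal, so the new arc joins $x$ to a point of $\{q_L,q_R\}$ and is never a loop; after all these flips $p$ meets only $\beta_L$, $\delta$ and $\beta_R$, with bounding triangles $(p,q_L,x)$ and $(p,x,q_R)$. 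Finally flip $\delta=px$ to $q_Lq_R$ (not a loop since $q_L\ne q_R$): now $p$ lies in the single triangle $(p,q_L,q_R)$ whose other two sides are $\beta_L,\beta_R$, so $p$ is incident to no non\nobreakdash-boundary arc.

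\medskip

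\textbf{Conclusion and main obstacle.} What remains is routine bookkeeping: checking that each flip above is loop\nobreakdash-free, i.e.\ that no newly created arc is a loop (which also rules out self\nobreakdash-folded triangles) or isotopic to a boundary segment, and confirming the combinatorial description of the frozen configuration; these follow from loop\nobreakdash-freeness of $T$ together with standard facts about triangulations of surfaces. Since each round of the induction strictly decreases $k$ -- the generic move by one, the frozen move down to $0$ -- after finitely many loop\nobreakdash-free flips $p$ becomes incident only to boundary arcs, proving the lemma. The genuine obstacle is the frozen case, and within it the point where one must locate an outer edge to ``unfold'' from: that is exactly where the hypothesis of at least four marked points is used.
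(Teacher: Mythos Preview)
Your proof is correct and follows essentially the same strategy as the paper: both pick a boundary vertex, distinguish the ``generic'' case (where a single flip at the vertex reduces its degree) from the ``frozen'' case (where all incident arcs go to the two boundary neighbours), and in the frozen case use the odd parity of the fan to locate an outer edge whose exterior triangle contributes a new vertex, then flip that edge. The only differences are cosmetic: the paper, after that one flip in the frozen case, simply recurses into the generic case (via Lemma~\ref{reduce}) rather than writing out your explicit sweep $\gamma_i,\dots,\gamma_k,\gamma_{i-1},\dots,\gamma_1,\delta$, and your treatment of the good-outer-edge existence is more careful in making explicit where the $\ge 4$ marked points hypothesis enters.
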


  \begin{proof}
Choose any marked point $b$ on the corresponding boundary component, denote by $c$ and $d$ the marked points on the same boundary neighboring to $b$. If $b$ is connected to at least one other marked point, proceed as in Lemma~\ref{reduce} to obtain a triangulation in which $b$ is incident to a unique non-boundary arc. Flipping this arc, we obtain a triangle with vertices $b,c,d$, in which $b$ is not incident to any non-boundary arc.  

Assume now that every arc incident to $b$ has $c$ or $d$ as its other endpoint. In this case, every triangle with vertex $b$ has sides $bc$ and $bd$, and these types of sides alternate in the neighborhood of $b$. In particular, the number of triangles with vertex $b$ is odd, so there exists a triangle $\Delta$ with vertex $b$ such that the third vertex of the triangle $\Delta'$ adjacent to $\Delta$ along the side $cd$ cannot coincide with $b$. As this third vertex cannot coincide with $c$ or $d$ either, $\Delta'$ has a vertex $f$ distinct from $b,c,d$, so after a flip of the common side of $\Delta$ and $\Delta'$ vertex $b$ is connected to $f$, and thus we are in the assumptions of the case that has been already considered.    

    \end{proof}

    \begin{remark}
\label{all-b-2}
In the assumptions that $S$ has at least two features, Lemma~\ref{boundary} allows us to restrict the considerations to surfaces with at most two marked points at every boundary component. Indeed, applying Lemma~\ref{boundary} and cutting off the newly created triangle, we can reduce the number of marked points on the chosen boundary component. We can always apply the lemma in a way that the union of the triangles that are cut off from a boundary component is connected, and thus these triangles compose a polygon, while all triangulations of a polygon are loop-free equivalent.  
      \end{remark}

Due to Remark~\ref{all-b-2}, we assume in the sequel that every boundary component has one or two marked points.

\begin{lemma}
\label{edge-b}
  Let $S$ have at least $4$ marked points, let $T$ be a loop-free triangulation without free digons, and let $b$ be a marked point. Then there exists a sequence of loop-free flips of arcs not incident to $b$ resulting in a triangulation in which $b$ is connected (by an arc or by a boundary arc) to at least $3$ distinct marked points.

  \end{lemma}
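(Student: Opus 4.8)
The plan is to reduce Lemma~\ref{edge-b} to the closed-surface statement Lemma~\ref{edge} by passing to the associate surface, while carefully tracking the chosen marked point $b$ and the types of digons involved. First I would consider the triangulation $T$ and, using Remark~\ref{all-b-2}, assume every boundary component carries one or two marked points. Since $T$ has no free digons, the only digons present are rooted ones (in the sense of Section~\ref{new-digons}), and by Remark~\ref{egal-rooted} the internal triangulation of each rooted digon is irrelevant up to loop-free flips. Replacing each rooted digon by a single arc (its exterior arc) and each one-marked-point boundary component by a puncture, we obtain a closed surface $\tilde S$ with an induced loop-free triangulation $\tilde T$; the point $b$ maps to a marked point $\tilde b$ of $\tilde T$ (if $b$ happens to be the root of a rooted digon, we keep track of that). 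Crucially, $\tilde S$ still has at least $3$ marked points, so Lemma~\ref{edge} applies on $\tilde S$: there is an arc of $\tilde T$ both of whose endpoints connect to at least $3$ distinct points.

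Next I would strengthen the statement of Lemma~\ref{edge} slightly, or rather re-run its proof with $b$ as a seed: start with \emph{any} triangle containing $\tilde b$ and add neighbouring triangles one at a time; the first time a fourth vertex appears among the vertices seen so far, the new common edge witnesses that a vertex connected to $\tilde b$ by a path is connected to $\ge 3$ distinct points. A short combinatorial argument — walking along the arcs incident to $\tilde b$ in cyclic order and flipping inductively, exactly as in Lemma~\ref{reduce} — then lets us propagate connectivity back to $\tilde b$ itself: if $\tilde b$ is connected to only one or two distinct marked points, then flips in arcs \emph{not incident} to $\tilde b$ (using a neighbouring triangle with a fresh vertex $f$, as in the final paragraph of Lemma~\ref{boundary}) produce an arc from $\tilde b$ to a new marked point, decreasing the defect. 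Iterating, we reach a triangulation $\tilde T'$ of $\tilde S$ in which $\tilde b$ is connected to $\ge 3$ distinct marked points, obtained from $\tilde T$ by loop-free flips of arcs not incident to $\tilde b$.

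Finally I would lift this sequence of flips back to $S$. By Lemma~\ref{commute} (and its bordered analogue, which holds by the discussion in Section~\ref{bordered}), each loop-free flip of $\tilde T$ in an arc not incident to $\tilde b$ can be realized by a sequence of loop-free flips of $T$ in arcs not incident to $b$: one may first use Lemma~\ref{digons}(c) / Lemma~\ref{digons-b} to move any rooted digons out of the way so that the preimage of the arc being flipped is a single arc, then perform the corresponding flip. Since $b$ is never an endpoint of a flipped arc, the rooted digons are never disturbed at $b$, and the connections of $b$ — whether realized by non-boundary arcs, boundary arcs, or exterior arcs of rooted digons — are preserved or improved throughout. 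The resulting triangulation of $S$ has $b$ connected (by an arc or a boundary arc) to at least $3$ distinct marked points, as required.

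The main obstacle I anticipate is the bookkeeping when $b$ lies on a boundary component with two marked points, or when $b$ is a pole (possibly the root) of a rooted digon: in these cases the ``associate'' picture must be set up so that a boundary arc or an exterior arc of a digon is counted as a legitimate connection, and one must check that none of the lifted flips is forced to touch such an arc. This is exactly the kind of case analysis that makes the bordered version heavier than the closed one, but each individual case is handled by the tools already developed (Lemmas~\ref{inside}, \ref{digons-b}, Remarks~\ref{egal}, \ref{egal-rooted}, \ref{all-b-2}), so no genuinely new idea is needed — only care.
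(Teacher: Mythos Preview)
Your approach is quite different from the paper's, and considerably more involved. The paper gives a direct, self-contained argument: since $T$ is loop-free, $b$ is already connected to at least two distinct points; pick any fourth marked point $f$, draw a path $\psi$ from $b$ to $f$ crossing a minimal number of arcs $\gamma_1,\dots,\gamma_k$ of $T$, and observe that by minimality none of the $\gamma_i$ is incident to $b$. Then flip $\gamma_k,\gamma_{k-1},\dots,\gamma_1$ in order; each flip is loop-free (the new arc is incident to $f$ and stays inside the corridor of triangles crossed by $\psi$), and after the last flip there is an arc from $b$ to $f$. No reduction to the closed case, no associate surface, no digon-moving.

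Your route has real obstacles, not just bookkeeping. First, invoking Remark~\ref{all-b-2} at the outset is illegitimate here: that remark rests on Lemma~\ref{boundary}, whose proof flips arcs incident to the chosen boundary vertex, so you cannot use it to normalise boundary components without possibly touching $b$. Second, once on the closed associate surface, Lemma~\ref{edge} only hands you \emph{some} arc with well-connected endpoints, not the specific vertex $\tilde b$; your ``short combinatorial argument'' to propagate this to $\tilde b$ appeals to Lemma~\ref{reduce}, which again flips arcs incident to the vertex in question. What you actually need on $\tilde S$ is precisely the statement you are trying to prove, so the reduction is circular unless you supply a new argument there --- and the clean argument is exactly the path-flipping one above. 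Third, lifting flips back via Lemma~\ref{digons-b}/\ref{commute-b} may require moving a rooted digon whose root is $b$, and those moves flip arcs incident to the root (see Fig.~\ref{move-b}); you would have to argue separately that this can always be avoided. None of these is obviously fatal, but together they mean your sketch is far from a proof, whereas the minimal-path argument settles the lemma in a paragraph.
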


  \begin{proof}
It is easy to see that $b$ is connected to at least two marked points, so we may assume $b$ is connected to points $c$ and $d$. Take any other marked point $f$, and take a path $\psi$ from $b$ to $f$ intersecting minimal number of arcs of $T$ (note that $\psi$ intersect every arc at most once due to minimality), denote these arcs by $\gamma_1,\dots,\gamma_k$. Consider the union $C$ of all triangles intersected by $\psi$, observe that both $f$ and $b$ appear in precisely one triangle of $C$ each (again, due to minimality), so none of $\gamma_i$ has $b$ as its endpoint. Now, a flip of $\gamma_k$ produces a new arc inside $C$ incident to $f$, so it is loop-free. Continuing flipping all $\gamma_i$ till $\gamma_1$, we obtain a new arc incident to both $f$ and $b$.    

    \end{proof}
    
    \begin{lemma}
     \label{reduce-b}
     Let $S$ be a bordered surface, let $T$ be a loop-free triangulation of $S$, and let $b$ be a boundary marked point belonging to a boundary component $\beta$.
     \begin{itemize}

     \item[(a)]
       Suppose that $S$ contains at least two marked point not belonging to $\beta$. Then there exists a sequence of loop-free flips of $T$ such that the resulting triangulation contains a rooted digon with root $b$.

     \item[(b)]
       Suppose that there is a rooted digon with root $b$, and $S$ contains at least $3$ marked points not belonging to $\beta$. Then there exists a sequence of loop-free flips in arcs incident to $b$ such that the resulting triangulation contains a free digon.

     \end{itemize}
     \end{lemma}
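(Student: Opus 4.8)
The plan is to obtain (a) by mimicking the proof of Lemma~\ref{make_digon} (together with the parity trick from Lemma~\ref{boundary}), and to obtain (b) by a new ``unrooting'' argument, which I expect to be the technical heart of the lemma.

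For part (a), I would first note that, $T$ being loop-free and $S$ having at least two marked points off $\beta$, the point $b$ is incident to at least two arcs of $T$; applying Lemma~\ref{edge-b} (whose flips avoid $b$) I may assume $b$ is connected to at least two distinct marked points, at least one of them off $\beta$. I would then reduce the number of arcs incident to $b$ by loop-free flips in arcs at $b$, exactly as in Lemma~\ref{reduce}: each such flip is loop-free since the subquiver around $b$ has the form treated there. The one case needing care is when every arc at $b$ joins $b$ to one of only two points; here I would argue as in Lemma~\ref{boundary}, using the parity of the number of triangles at $b$ to locate a triangle $\Delta'$ opposite $b$ whose far vertex is new, and flip the side of $\Delta'$ facing $b$ to create a fresh neighbour of $b$ without producing a loop. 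Iterating until $b$ sits in the minimal configuration leaves exactly a rooted digon with root $b$, where by Remark~\ref{egal-rooted} the triangulation inside the digon is immaterial.

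For part (b), I would first standardise the interior triangulation of the given rooted digon via Remark~\ref{egal-rooted}, so that its two exterior arcs $e_1,e_2$ run from the root $b$ to the second pole $a$. The goal is to replace the bounding bigon of the enclosed hole by a new bigon $e_1',e_2'$ whose two poles are distinct from $b$, turning the rooted digon into a free digon, using only flips in arcs incident to $b$. Since $S$ has at least three marked points off $\beta$, besides $a$ there are at least two further marked points that can serve as poles of the target free digon, and there is a triangle adjacent to the rooted digon whose third vertex is distinct from $b$ and from $a$. Flipping arcs at $b$ one at a time, in the spirit of the ``moving a digon'' manipulations of Lemma~\ref{digons-b}(a) and Fig.~\ref{move-b}, I would slide the hole off $b$ across this triangle; after finitely many steps neither $e_1$ nor $e_2$ is incident to $b$, and the digon is free. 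Loop-freeness of every intermediate triangulation would be checked via Lemma~\ref{no2} (no double arrow appears).

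The hard part will be the bookkeeping in (b): I must show that, subject to the restriction to flips at $b$, the unrooting procedure terminates and cannot stall in a configuration where every available flip at $b$ recreates a rooted digon or a loop, and that ``three marked points off $\beta$'' is precisely the hypothesis needed to guarantee that a reachable target free digon, with two distinct poles both off $\beta$, exists. Tracking loop-freeness through all the intermediate flips, as in the closed-surface digon lemmas of Section~\ref{closed}, is the main source of case analysis.
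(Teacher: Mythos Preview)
Your proposal for (a) has the roles of the two boundary points reversed, and this is not cosmetic. After Remark~\ref{all-b-2} the interesting case is when $\beta$ carries exactly two marked points $b$ and $c$. A rooted digon with root $b$ is, by definition, a digon whose two poles are $b$ and some point $p\notin\beta$, with $c$ (and hence $\beta$) enclosed in its interior. To produce such a digon you must \emph{isolate $c$}, not $b$: the paper applies Lemma~\ref{edge-b} to $c$, then runs the Lemma~\ref{reduce} procedure at $c$ until $c$ has exactly two non-boundary arcs, and a final flip in one of them creates the digon around $c$ with $b$ as one pole (Fig.~\ref{make-digons}, left). Your scheme reduces arcs at $b$ instead; carrying it through to the minimal configuration leaves $b$ in the interior of a digon whose poles are $c$ and some $a$, i.e.\ a rooted digon with root $c$, not $b$. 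The parity trick from Lemma~\ref{boundary} does not help here, since that argument is tailored to a component with at least three boundary points.

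For (b), your ``unrooting by sliding'' plan is based on Lemma~\ref{digons-b}(a), but that lemma moves a rooted digon from the side $a_1a_2$ to the side $a_2a_3$ \emph{while keeping the root at $a_2$}; it never detaches the root. So repeated applications with $a_2=b$ will simply rotate the digon around $b$ without ever freeing it, and the bookkeeping you flag as ``the hard part'' cannot be completed along these lines. The paper proceeds quite differently: it collapses the rooted digon to a single \emph{colored} arc at $b$, then runs the Lemma~\ref{reduce} reduction at $b$ exactly as in the closed-surface argument, with one extra rule---whenever the arc to be flipped happens to be the colored one, first use Lemma~\ref{digons-b}(a) to shift the color onto a neighboring arc at $b$. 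When $b$ is left with one colored arc and two ordinary arcs (to three distinct points, which is where the hypothesis of three marked points off $\beta$ is used), re-expand the colored arc back into the rooted digon and flip one of the two ordinary arcs; this creates a free digon enclosing the rooted one (Fig.~\ref{make-digons}, right). In short, the mechanism in (b) is the same reduction-at-a-vertex idea as in (a), transported through a coloring trick, rather than a digon-sliding argument.
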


    \begin{proof}
      In (a), we can assume that $b$ is one of the two marked points on the boundary component, otherwise there is a rooted digon already. Denote the other marked point by $c$. Denote by $a_1,\dots,a_k$ the endpoints of arcs incident to $c$ ordered clockwise (we assume that $a_i$ is connected to $a_{i+1}$, where the connecting arc might belong to boundary), $a_1=a_k=b$. By Lemma~\ref{edge-b}, we may assume that $c$ is connected to two other marked points, $d$ and $f$. If some  consecutive points $a_i$ and $a_{i+1}$ coincide, then the flip in $ca_i$ decreases the number of arcs incident to $c$ keeping the number of neighboring marked points at least $3$. Otherwise, we can decrease the number of arcs incident to $c$ proceeding as in the proof of Lemma~\ref{reduce}. Applying the procedure iteratively, we obtain a triangulation in which $c$ is incident to precisely two non-boundary arcs. Flipping any of them results in a digon with root at $b$, see Fig.~\ref{make-digons}, left.

      To prove (b), assume that there is a rooted digon $D$ with root at $b$. Substitute the rooted digon with a colored arc, and apply Lemma~\ref{edge-b} (if needed) to make sure $b$ is connected to at least three distinct marked points. Now, one can proceed as in the proof of Lemma~\ref{reduce} to decrease the number of arcs incident to $b$. If we need to flip the colored arc, then we can apply Lemma~\ref{digons-b} to color neighboring arc and make the required arc regular, after which we can flip it. Applying the procedure iteratively, we obtain a triangulation in which $b$ is incident to one colored arc and precisely two other non-boundary arcs. Substitute the colored arc with a rooted digon, and flip any of the two other arcs to obtain a free digon, see Fig.~\ref{make-digons}, right.
      
         \end{proof}

\begin{figure}[!h]
\begin{center}
\includegraphics[width=0.7\linewidth]{./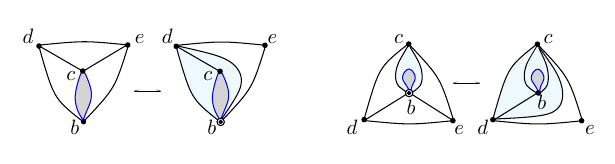}
  \caption{Constructing digons: rooted (left) and free (right).}
\label{make-digons}
\end{center}
\end{figure}

We can now define two digon removal operations similar to the one for closed surfaces (see Definition~\ref{associate}). A {\em free digon removal operation} removes from $S$ one {\em free} digon {\em with puncture} only. A {\em rooted digon removal operation} removes one rooted digon.  Both operations are well defined as corresponding digons do not overlap. The {\em associate surface} $\tilde S$ with an {\em associate triangulation} $\tilde T$ is obtained by iterated application of the two digon removal operations until no digon is left. As for closed surfaces, associate surface and triangulation are well defined (see Remark~\ref{well}). As before, we denote by $\pi_T$ the natural map $S\to \tilde S$. Also, for $g=0$ we leave one digon intact if the last digon removal is applied to a surface consisting of one digon only (cf. Remark~\ref{g0}).

\begin{lemma}
      \label{digons-b2}
Let $T$ be a loop-free triangulation of a bordered surface $S$.
      The sequences of loop-free flips in Lemma~\ref{digons-b} can be chosen in a way so that the resulting triangulation $T'$ coincides with $T$ after one or two digon removals. In particular, $T'$ has the same associate triangulation $\tilde T$.

  \end{lemma}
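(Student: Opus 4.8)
The plan is to revisit the two constructions of Lemma~\ref{digons-b} (the move of a free digon in part (a) for a free $D$, and the move of a rooted digon in part (a) for a rooted $D$, and the swap in part (b)) and check that in each case the sequence of loop-free flips we use can be arranged so that it does not touch the interior structure of the relevant digon(s), only their exterior arcs and the arcs of the adjacent triangle $\Delta$ (or the second digon). Concretely, since a free digon with a puncture and a rooted digon are the precise pieces removed by the digon removal operations, it suffices to show that all flips used in Lemma~\ref{digons-b} are performed on arcs lying \emph{outside} the closed region that is collapsed by the relevant digon removal. Inspecting Figs.~\ref{move-d},~\ref{move-b} and~\ref{swap}, the flips are exactly: the flip of the common arc $a_1a_2$ of $\Delta$ and $D$ (which opens up a once-punctured or once-holed triangle), followed by a flip of an interior arc of that new configuration connecting the puncture/hole to $a_1$; these produce a digon with poles $a_2,a_3$. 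The point is that, up to the marked points internal to $D$ and the internal triangulation of $D$ (which by Remarks~\ref{egal} and~\ref{egal-rooted} is immaterial), the region $D\cup\Delta$ before and after the moves is the same surface piece with the same boundary data; so after performing the appropriate digon removal on both $T$ and $T'$ we obtain the identical triangulation.

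The steps I would carry out in order are as follows. First I would fix notation: let $\pi$ denote the free digon removal (for $D$ free with a puncture) or the rooted digon removal (for $D$ rooted), applied to the digon being moved, so that $\pi$ collapses $D$ to one of its exterior arcs while being identical on everything else. Second, I would observe that each flip used in the proof of Lemma~\ref{digons-b}(a) acts on an arc that is either an exterior arc of $D$ or an arc of $\Delta\setminus D$ or an arc created strictly between $a_1,a_2,a_3$ and the single puncture/hole; in particular no flip alters the interior triangulation of $D$ or the arcs strictly interior to $D$. Third, I would note that, because $\pi$ sends $D$ to an arc by a map identical on the complement, applying $\pi$ before or after this sequence of flips yields the same result: on the complement of $D$ nothing changed, and $D$ itself is collapsed. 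Hence $\pi(T)=\pi(T')$, and since by definition the associate triangulation is obtained from $T$ (resp.\ $T'$) by iterating digon removals in any order (well-definedness, Remark~\ref{well}), and $\pi$ is one of the legitimate first steps, we get $\tilde T'=\tilde T$. Fourth, for part (b) of Lemma~\ref{digons-b}, I would point out that it is just an iteration of part (a), so the same observation applies to each step, and in the language of digon removals swapping two stacked digons does not change which surface results from collapsing them both.

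The main obstacle I anticipate is the bookkeeping around the rooted-digon case, where one must be careful that the flips used to move the digon genuinely avoid the boundary component and the hole interior to the rooted digon, and that the notion of ``coincides after one or two digon removals'' is matched correctly to whether one or two digons are being manipulated (one in part (a), two in the swap of part (b)). I would handle this by explicitly tracking, in Figs.~\ref{move-b} and~\ref{swap}, that every flip is on an arc with at least one endpoint among $a_1,a_2,a_3$ and lying in $\overline{\Delta\cup D}$ but not interior to $D$, combined with the fact (Remark~\ref{egal-rooted}) that the triangulation interior to a rooted digon can always be taken to a standard form by loop-free flips without affecting anything outside, so that the ``digon removal after the sequence of flips'' is literally the same operation as ``digon removal before''. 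The remaining verification — that removing one digon (part (a)) resp.\ two digons (part (b), the swap) from $T'$ gives the same triangulation as removing the corresponding digons from $T$ — is then immediate from the picture, and the final sentence of the lemma follows because the associate triangulation only depends on the result after all digon removals, which is reached by continuing from the common intermediate triangulation.
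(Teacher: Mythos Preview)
Your approach is essentially the same as the paper's: both arguments reduce to the observation already made in the proof of Lemma~\ref{digons}(b), namely that removing the moved digon from $T$ and removing the corresponding (relocated) digon from $T'$ yield the \emph{same} triangulation, whence $\tilde T'=\tilde T$ by well-definedness of the associate triangulation (Remark~\ref{well}). The paper's proof is a one-line reference to Lemma~\ref{digons}(b) together with a pointer to Figs.~\ref{move-d} and~\ref{move-b}; you spell out the same picture-check in more detail.

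One small wording issue: your phrase ``applying $\pi$ before or after this sequence of flips'' is not quite the right way to say it, since after collapsing $D$ there is no sequence of flips left to perform on the smaller surface. What you actually need (and what you do write immediately afterwards) is $\pi_D(T)=\pi_{D'}(T')$, where $\pi_D$ collapses the original digon in $T$ and $\pi_{D'}$ collapses the new digon in $T'$; these two removals are different maps on different digons, and the equality is the content of the picture in Figs.~\ref{move-d},~\ref{move-b}. With that clarification, your argument goes through and matches the paper's.
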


  \begin{proof}
The proof is identical to the proof of Lemma~\ref{digons}(b): we just require one rooted digon removal in the sequence from Lemma~\ref{digons-b}(b), and one rooted digon removal followed by one free digon removal in the sequence from Lemma~\ref{digons-b}(a) to get the same triangulations (see Figs.~\ref{move-b},~\ref{move-d}).

 \end{proof}

 \begin{lemma}
   \label{commute-b}

Let $S$ be a surface with a loop-free triangulation $T$, let $T_1$ and $S_1$ be the triangulation and the surface obtained by applying one digon removal to $T$. Let $f_1$ be a loop-free flip of $T_1$. Then there exists a sequence of loop-free flips taking $T$ to a triangulation $T'$ such that $f_1(T_1)$ can be obtained from $T'$ by at most two digon removals.

   \end{lemma}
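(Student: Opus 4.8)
The statement is the bordered-surface analogue of Lemma~\ref{commute} (the closed case), and the plan is to imitate that proof while keeping track of the two kinds of digons introduced in Section~\ref{new-digons}. Let $\tilde\gamma$ be the arc of $T_1$ changed by the flip $f_1$, and let $\delta=\pi^{-1}(\tilde\gamma)\subset T$ be its preimage under the single digon removal taking $T$ to $T_1$. Here $\pi$ denotes the natural map $S\to S_1$ associated to that one digon removal, so exactly one arc of $T$ (the interior arc of the removed digon, if one was removed) fails to have a single-arc preimage; every other arc of $T_1$, in particular possibly $\tilde\gamma$, pulls back to a single arc of $T$.

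\textbf{Key steps.} First, if $\delta=\pi^{-1}(\tilde\gamma)$ is a single arc of $T$, then performing the flip in $\delta$ directly produces a triangulation $T'$ of $S$ such that removing the same digon turns $T'$ into $f_1(T_1)$; this is the easy case and $T'$ is obtained from $f_1(T_1)$ by a single digon removal. Second, if $\tilde\gamma$ happens to be the side of $T_1$ that is created by the digon removal (equivalently, $\pi^{-1}(\tilde\gamma)$ is the pair of exterior arcs of the removed digon $D$, so $\tilde\gamma$ is not honestly the image of one arc of $T$), we must first move the digon $D$ out of the way: using Lemma~\ref{digons-b}(a) — applicable because $D$ is either free, or rooted with root a pole of the triangle it is glued to, exactly as in the closed case via Lemma~\ref{digons}(c) — we slide $D$ along a sequence of loop-free flips to a position where the arc to be flipped in $S$ has a genuine single-arc preimage, i.e. is disjoint from $D$. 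By Lemma~\ref{digons-b2} this sequence of flips does not change the associate triangulation, hence does not change which digon is removed, so after these preparatory flips we are back in the first case and the flip in $\delta$ produces the desired $T'$. The phrase ``at most two digon removals'' in the statement accommodates the case where the digon removal producing $T_1$ from $T$ and the move of $D$ (which, in the rooted-to-free situation of Lemma~\ref{digons-b2}(a), may replace one digon removal by a pair: one rooted and one free) interact; tracking this bookkeeping — exactly as in the proof of Lemma~\ref{digons-b2} — shows $f_1(T_1)$ is recovered from $T'$ after one or two digon removals.

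\textbf{Main obstacle.} The only genuine subtlety, as in the closed-surface argument, is the case where $\tilde\gamma$ is precisely the arc of $T_1$ along which the removed digon was glued, so that it has no honest single-arc preimage in $T$; everything else is immediate from performing the corresponding flip upstairs. Resolving this requires the freedom, guaranteed by Remark~\ref{edges} together with Lemma~\ref{digons-b}(a) (and, for the order-changing of stacked digons, Lemma~\ref{digons-b}(b)), to reposition the digon by loop-free flips without altering the associate triangulation, which is exactly what Lemma~\ref{digons-b2} provides. Once the digon is moved, the argument is identical to the closed case. Thus the proof reads: \emph{The proof is identical to the proof of Lemma~\ref{commute}, using Lemma~\ref{digons-b} and Lemma~\ref{digons-b2} in place of Lemma~\ref{digons}, and allowing for the possibility that moving the relevant digon replaces a single digon removal by a rooted one followed by a free one, whence ``at most two digon removals.''}
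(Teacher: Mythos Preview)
Your proposal is correct and follows essentially the same approach as the paper's proof: split into the case where the flipped arc $\tilde\gamma$ is not the side produced by the digon removal (flip the corresponding arc upstairs) versus the case where it is (move the digon aside via Lemma~\ref{digons-b}(a), then flip), invoking Lemma~\ref{digons-b2} to justify that the move costs at most two digon removals. Two small expository slips: you write ``exactly one arc of $T$'' where you mean ``exactly one arc of $T_1$'', and there is no Lemma~\ref{digons-b2}(a) --- you mean the case of Lemma~\ref{digons-b}(a) as handled in the proof of Lemma~\ref{digons-b2}.
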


   \begin{proof}
The proof follows the proof of Lemma~\ref{commute}: if $f_1$ flips an arc $\gamma_1$ of $T'$ that is not a side of the removed digon in $T$, then we can flip the corresponding arc $\gamma$ in $T$; otherwise we color $\gamma_1$ and use Lemma~\ref{digons-b}(a) to color neighboring arc and make  $\gamma_1$ regular, so we can now flip the corresponding arc $\gamma$ in $T$. Note that a sequence of flips in  Lemma~\ref{digons-b}(a) does not change triangulation produced after two digon removals. 

     \end{proof}

  \subsection{Proof of Proposition~\ref{trans} for bordered surfaces}
         \label{trans-bordered}

We can now prove Proposition~\ref{trans} for bordered surfaces with at least four features.

Consider a loop-free triangulation $T$. Index the boundary components $\beta_1,\dots,\beta_k$ such that the number of marked points on $\beta_i$ does not exceed the number of marked points on $b_{i-1}$. According to Remark~\ref{all-b-2}, we can apply loop-free flips to assume that every component has at most two marked points, choose a point $b_i\in\beta_i$ for all $i=1,\dots,k$.

Apply Lemma~\ref{reduce-b} to point $b_1$ to create a free digon containing boundary component $\beta_1$. By applying a rooted digon removal and then a free digon removal, we obtain a new triangulation $T_1$ of a surface $S_1$, in which the free digon is substituted with an arc. Color this arc, and apply Lemma~\ref{reduce-b} to point $b_2$. By Lemma~\ref{commute-b}, all loop-free flips on $S_1$ can be understood as sequences of loop-free flips on $S$, and if we need to flip the colored arc on $S_1$, then we can first use Lemmas~\ref{digons-b} and~\ref{digons-b2} to color a neighboring arc, and then flip the corresponding arc. If after the application of Lemma~\ref{reduce-b} to point $b_2$ the colored arc lies inside the new digon, apply Lemmas~\ref{digons-b} and~\ref{digons-b2} several times to color the side of the new digon instead.

The operation above can now be applied to further boundary components, until either we get a closed surface with at least $3$ punctures, or a bordered surface with $3$ features.

In the former case, we can consider the resulting triangulation as a triangulation of $S$ with all boundary components lying inside free digons, with all these digons attached consecutively to each other. By Lemma~\ref{digons-b} the position and the order of the digons is not relevant, and by Lemma~\ref{inside} the triangulation inside these digons is not relevant either. Applying the same procedure to triangulation $T'$ and making use of Prop.~\ref{trans} for closed surfaces (see Section~\ref{trans-closed}), we obtain required sequence of loop-free flips.

In the latter case, apply Lemma~\ref{reduce-b}(a) to create a rooted digon, and then apply rooted digon removal. Repeat the procedure (at most two times) to obtain a thrice punctured closed surface $\tilde S_0$ with triangulation $\tilde T_0$. We can consider the obtained triangulated surface as the associate surface of $S=S_0$, with triangulation $T_0$, where all free digons are consecutively attached to each other, and there are at most three rooted digons not included in free digons. As in the previous case,  the position, the order and the triangulation inside the free digons are not relevant, and by Lemma~\ref{digons-b} we can assume that any of the rooted digons have their side coinciding with any given arc incident to its root.

We now proceed as in Section~\ref{trans-closed}. Choose one free digon $D$ adjacent to a triangle not lying in any digon (we may assume it contains boundary component $\beta_1$), consider the associated triangulation $\tilde T_0$ with $D$ glued in, and apply the rooted digon removal (as before, color the resulting arc). The resulting triangulation $\tilde T_0^*$ satisfies the assumptions of Lemma~\ref{trans4}. Note that in the proof of Lemma~\ref{double-wheel} one can permute the vertices of the triangulation (see Fig.~\ref{2wheel}), so proceeding as in the proof of Lemma~\ref{trans4} we can transform $\tilde T_0^*$ to a given triangulation with prescribed boundary components (or punctures) in its four vertices. Repeating the whole procedure for triangulation $T'$, we obtain the same triangulation. Now, by the construction of $T_0$ (and the corresponding triangulation $T_0'$ for $T'$), we can assume that $T_0$ coincides with $T_0'$, which completes the proof.

\section{Further results and remarks}

\subsection{Triangulations with self-folded triangles}
\label{sf}

The main result of the paper can be reformulated for the triangulations containing self-folded triangles.  In this case, the corresponding quiver still does not contain double arrows (unless $S$ is a twice punctured disc, which we do not consider here since this is an affine type treated in~\cite{affine}).

The definition of the group $G(Q)$ should be slightly amended, we need to add new defining relations of type (R4).  

\begin{itemize}
\item[(R4)]
Let $C$ be a subquiver of $Q$ obtained from a chordless oriented cycle of length $m\ge 3$ by substituting some arrows with blocks of type IV oriented backwards, see Fig.~\ref{not-orb} for the notation and Fig.~\ref{rel-p} for an example. Denote by $1,\dots,m$ the vertices of the original cycle, and by $i_1,i_2$ the additional vertices of the block substituting the arrow $(i-1)\to i$. Define words $u_i$, $i=1,\dots,m$, as follows: either $u_i=s_{i_1}s_{i_2}s_is_{i_2}s_{i_1}$ if the arrow $(i-1)\to i$ is substituted with a block, or $u_i=s_i$ otherwise. For every such subquiver we take the relation  
$$(u_1u_2u_3\dots u_{m-1}u_{m}u_{m-1}\dots u_3u_2)^2=e$$
\end{itemize}

\begin{figure}[!h]
\begin{center}
\includegraphics[width=0.6\linewidth]{./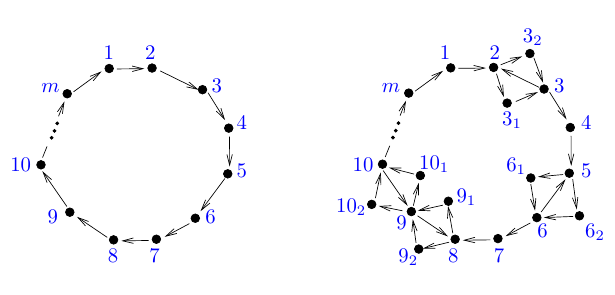}
    \caption{Support of relation of type (R4).}
\label{rel-p}
\end{center}
\end{figure} 

Note that for triangulations containing self-folded triangles the counterpart of Proposition~\ref{trans} holds: if there are two triangulations without loops, one can get rid of conjugate pairs (i.e., make the triangulation loop-free) by flipping all tagged arcs in the conjugate pairs, and then apply Proposition~\ref{trans}.

A direct verification of the defining relations (as in the proof of Lemma~\ref{flip_invariance} and in~\cite{BM,affine}) implies the following counterpart of Theorem~\ref{invariance-th}.

\begin{prop}
  \label{self}
  Let $S$ be a marked surface with at least $4$ features, let $T$ be a triangulation of $S$ not containing loops, and let $Q$ be the corresponding quiver. Then the  group $G(Q)$ with defining relations $(\mathrm{R}1)$--$(\mathrm{R}4)$ is an invariant of $S$, i.e. it does not depend on the triangulation.

  \end{prop}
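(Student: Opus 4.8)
The plan is to reduce Proposition~\ref{self} to Theorem~\ref{invariance-th} by the same strategy used throughout the paper: establish invariance under a flip (now possibly a flip of a tagged arc inside a conjugate pair), and then use a connectivity statement reducing any two loop-free-in-the-weak-sense triangulations to the situation already handled. Concretely, first I would note (as the text does) that if $T$ and $T'$ are two triangulations of $S$ without loops, then by flipping all tagged arcs appearing in conjugate pairs we may pass from each to a genuinely loop-free triangulation (in the sense of Definition in Section~\ref{construction}), so the counterpart of Proposition~\ref{trans} holds in this setting. Hence it suffices to prove that $G(Q)$, now defined with relations (R1)--(R4), is unchanged under a single flip, where the flip may be (i) a loop-free flip between loop-free triangulations, or (ii) a flip of a tagged arc creating or destroying a conjugate pair (equivalently, a mutation at the vertex of a block of type IV).

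The first case is already covered: for loop-free flips the subquivers carrying relations are exactly the ones treated in Lemma~\ref{flip_invariance}, and the new (R4) relations only arise when a block of type IV is present, so they do not interfere — any loop-free flip that does not touch a block of type IV leaves those relations formally unchanged, and one checks that a flip inside the ``stable'' part of a subquiver supporting an (R4) relation transforms that relation into the corresponding (R4) relation of the mutated quiver. The second case is the genuinely new content. Here I would perform a direct verification, entirely parallel to the verifications in~\cite{BM,affine} and in the proof of Lemma~\ref{flip_invariance}: write down the map $s_i'\mapsto t_i$ given by the mutation rule $(*)$ at the relevant vertex $k$ (the inner vertex of the self-folded triangle / the tip of the block), and check that the images $t_i$ satisfy relations (R1)--(R4) of $G(Q')$, and conversely. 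The local pictures are finite in number — the vertex being flipped sits inside a block of type IV, so one only needs to understand how such a block, together with the chordless cycle it is attached to, behaves under mutation at its internal vertices. One then verifies that the braid-type relation
$$(u_1u_2\dots u_{m-1}u_m u_{m-1}\dots u_2)^2=e$$
is preserved, using that the words $u_i$ are built precisely so that conjugation by $s_k$ permutes the roles of $s_i$ and the block-generators $s_{i_1},s_{i_2}$ in the expected way; this is the same bookkeeping as in Remark~\ref{in-or-out}.

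The main obstacle I anticipate is the combinatorial classification underlying the second case: one must enumerate all ways a block of type IV can sit inside a finite-mutation-type quiver coming from a triangulated surface, mutate at its inner vertices, and confirm that in every case the result is again of the expected form (another block of type IV, or a chordless cycle, or a mutation-infinite / double-arrow situation that is excluded by the hypothesis of no loops and at least $4$ features, as in Lemma~\ref{no2}). This is where one leans on~\cite{FST,Gu1,Gu2} to identify the surface pieces corresponding to these subquivers (a self-folded triangle glued into a once-punctured polygon), exactly as in the final paragraph of the proof of Lemma~\ref{flip_invariance}. Once that classification is in hand, the algebraic verification of (R1)--(R4) is routine, albeit lengthy, and the Proposition follows by combining it with the (R4)-aware version of Proposition~\ref{trans} noted above. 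I would present the argument by saying that a direct verification of the defining relations, identical in spirit to the proof of Lemma~\ref{flip_invariance} and the computations of~\cite{BM,affine}, establishes the invariance, and then invoke Proposition~\ref{trans} (in the form valid after removing conjugate pairs) to conclude.
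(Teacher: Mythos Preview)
Your proposal is correct and matches the paper's (very terse) argument: the counterpart of Proposition~\ref{trans} holds by first flipping the tagged arcs in conjugate pairs to reach genuinely loop-free triangulations and then applying Proposition~\ref{trans}, while a direct verification of the defining relations (as in Lemma~\ref{flip_invariance} and~\cite{BM,affine}) gives flip-invariance of $G(Q)$ under each step. One small simplification: in your case (i) both triangulations are strictly loop-free, so their quivers contain no blocks of type~IV and hence no (R4) relations at all --- Lemma~\ref{flip_invariance} applies verbatim, and your discussion of how loop-free flips interact with (R4) is unnecessary.
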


\subsection{Orbifolds}
\label{orb}

The result of the previous section can be generalized further: we can consider triangulations (without loops) of orbifolds with orbifold points of order $2$, see~\cite{orbifolds}. To every such triangulation one can assign a skew-symmetrizable matrix and a quiver with valued arrows (also called {\em diagrams}). A diagram can be mutated~\cite{FZ2}, mutations correspond to flips of triangulation.

Given such a diagram $\S$ with vertices $1,\dots,n$, we can define a group $G(\Sigma)$ with generators $s_1,\dots,s_n$ and the following defining relations (R1')--(R4'):

\begin{itemize} 
\item[(R1')] $s_i^2=e$ for all $i=1,\dots,n$;

\item[(R2')] $(s_is_j)^{m_{ij}}=e$ for all $i,j$, where
$$
m_{ij}=
\begin{cases}
2 & \text{if $i$ and $j$ are not joined;} \\
3 & \text{if $i$ and $j$ are joined by a simple arrow;}\\
4 & \text{if $i$ and $j$ are joined by an arrow of weight $2$};
\end{cases}
$$

\item[(R3')] (cycle relation) for every chordless oriented cycle $C$ given by 
$$i_1{\to} i_2{\to}\cdots{\to} i_{d}{\to}i_1$$
(where some arrows may have weight $2$) we take a relation
$$
(s_{i_1}s_{i_{2}}\dots s_{i_{d-1}} s_{i_{d}}s_{i_{d-1}}\dots s_{i_{2}})^2=e.
$$

\item[(R4')]
Let $C$ be a subquiver of $Q$ obtained from a chordless oriented cycle (with simple arrows only) of length $m\ge 3$ by substituting some arrows with blocks of type IV or IV' oriented backwards, see Fig.~\ref{not-orb}. Denote by $1,\dots,m$ the vertices of the original cycle, by $i_1,i_2$ the additional vertices of block of type IV substituting the arrow $(i-1)\to i$, and by $i_0$ the additional vertex of block of type IV' substituting the arrow $(i-1)\to i$. Define words $u_i$, $i=1,\dots,m$, as follows: either $u_i=s_{i_1}s_{i_2}s_is_{i_2}s_{i_1}$ if the arrow $(i-1)\to i$ is substituted with a block of type IV, or $u_i=s_{i_0}s_is_{i_0}$ if the arrow $(i-1)\to i$ is substituted with a block of type IV', or $u_i=s_i$ otherwise. For every such subquiver we take the relation  
$$(u_1u_2u_3\dots u_{m-1}u_{m}u_{m-1}\dots u_3u_2)^2=e$$

\end{itemize}

\begin{figure}[!h]
\begin{center}
\includegraphics[width=0.6\linewidth]{./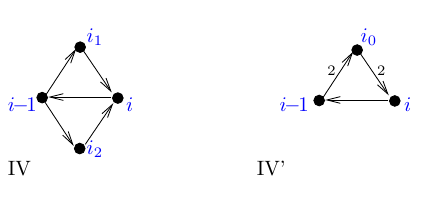}
    \caption{Blocks of types IV and IV' in the notation of~\cite{FST} and~\cite{orbifolds}}
\label{not-orb}
\end{center}
\end{figure}

A counterpart of Proposition~\ref{trans} holds in this case as well: in a triangulation of an orbifold, every orbifold point is contained in a digon, which we can treat as a free digon and proceed as in Sections~\ref{closed} and~\ref{bordered} with slight adjustments. In particular, the procedure of moving a digon shown in Fig.~\ref{moving_digon} can be realized by one flip.

Considerations similar to the ones in Section~\ref{sf} lead now to the following statement (note that by a feature we still mean a puncture or boundary component, orbifold points do not count as features).

\begin{theorem}
  \label{orb-th}
  Let $S$ be a marked orbifold with orbifold points of order $2$, suppose that $S$ has at least $4$ features. Let $T$ be a triangulation of $S$ not containing loops, and let $\S$ be the corresponding diagram. Then the  group $G(\Sigma)$ with defining relations $(\mathrm{R}1')$--$(\mathrm{R}4')$ is an invariant of $S$, i.e. it does not depend on the triangulation.

  \end{theorem}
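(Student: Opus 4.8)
\textbf{Proof proposal for Theorem~\ref{orb-th}.}

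The plan is to reduce Theorem~\ref{orb-th} to the results already established for surfaces, exactly along the lines indicated in the paragraph preceding the statement. There are two independent things to prove: (i) the group $G(\Sigma)$ is well-defined for every loop-free triangulation and is invariant under loop-free flips (the ``local'' part, analogous to Lemma~\ref{flip_invariance}); and (ii) any two loop-free triangulations of $S$ are connected by a sequence of loop-free flips up to combinatorial equivalence (the ``global'' part, analogous to Proposition~\ref{trans}). Given (i) and (ii), the theorem follows formally, just as Theorem~\ref{invariance-th} follows from Lemma~\ref{flip_invariance} and Proposition~\ref{trans}, and as Proposition~\ref{self} follows by the same pattern.

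For part (i), I would argue as in the proof of Lemma~\ref{flip_invariance} together with the verifications in~\cite{BM,affine}. Fix a loop-free flip $\mu_k$ and the candidate isomorphism sending the generators $s_i'$ of $G(\mu_k(\Sigma))$ to the conjugates $t_i\in G(\Sigma)$ defined by $(*)$ (with the convention of Remark~\ref{in-or-out}); one must check that the $t_i$ satisfy relations (R1')--(R4') of $G(\mu_k(\Sigma))$ and, by symmetry of mutation, vice versa. As before the support of any such relation is a subdiagram of bounded ``shape'': either $k$ lies in the support, in which case the support is a subdiagram of finite (orbifold) type and the relation holds by the Barot--Marsh/affine verification; or $k$ lies outside the support, and then the induced subdiagram on the support together with $k$ is, by the block-decomposition classification of~\cite{FST,orbifolds} and the finite-mutation-type analysis of~\cite{FeSTu1} and~\cite{S,Gu1,Gu2}, either mutation-infinite, or of finite/affine type, or has $k$ as a sink/source, or the flip $f_k$ fails to be loop-free. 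The new feature relative to Lemma~\ref{flip_invariance} is that the support may now involve blocks of type IV or IV' (orbifold points, weight-$2$ arrows); here the relevant subdiagrams are precisely those classified as triangulated orbifold pieces, and the check that relations (R2')--(R4') propagate under mutation is the same finite case analysis carried out in~\cite{affine} for orbifolds. This is the step I expect to be the main obstacle: it is not conceptually hard, but it requires carefully enumerating the small subdiagrams containing a block of type IV or IV' adjacent to (or inside) a chordless oriented cycle and verifying the words $u_1u_2\cdots u_{m-1}u_m u_{m-1}\cdots u_2$ behave correctly under $\mu_k$ — essentially a bookkeeping extension of the finite-type verification, where the geometric picture (an orbifold point sitting in a digon that one may slide by flips) keeps the case list finite.

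For part (ii), I would invoke the geometric observation stated in the text: in any triangulation of an orbifold with order-$2$ points, each orbifold point sits inside a digon, and this digon may be treated exactly as a free digon was treated in Sections~\ref{closed} and~\ref{bordered} — indeed the ``move a digon'' operation of Fig.~\ref{moving_digon} is realized here by a single flip rather than two. Thus the digon-removal machinery (Definition~\ref{associate}, Lemmas~\ref{digons}, \ref{commute}, \ref{ass-3} and their bordered counterparts Lemmas~\ref{digons-b}, \ref{digons-b2}, \ref{commute-b}) applies verbatim after adding orbifold-point digons to the collection of free digons, since the key facts used — that distinct digons do not overlap, that removing them in any order gives the same associate surface (Diamond Lemma, Remark~\ref{well}), and that digons can be slid and reordered by loop-free flips — all persist. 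Running the proof of Proposition~\ref{trans} (Sections~\ref{trans-closed} and~\ref{trans-bordered}) with this enlarged notion of free digon reduces two loop-free triangulations of the orbifold $S$ to triangulations with the same associate surface, the same free/rooted digons (now including the orbifold-point digons, whose positions are irrelevant by the sliding lemmas and whose internal triangulation is irrelevant by Remarks~\ref{egal} and~\ref{egal-rooted}), hence to combinatorially equivalent triangulations. Finally, to handle triangulations that merely avoid loops (but may contain self-folded triangles), one first flips all tagged arcs in conjugate pairs to reach a loop-free triangulation — as noted in Section~\ref{sf} — and then applies the above; the relations (R4') are exactly what guarantee that $G(\Sigma)$ is unchanged across these initial flips, by the same direct verification as in the proof of Lemma~\ref{flip_invariance}. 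Combining (i) and (ii) gives the invariance claimed.
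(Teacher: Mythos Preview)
Your proposal is correct and follows precisely the route the paper indicates: split into (i) invariance under loop-free flips via the same case analysis as Lemma~\ref{flip_invariance} extended by the verifications in~\cite{BM,affine}, and (ii) a counterpart of Proposition~\ref{trans} obtained by treating orbifold-point digons as free digons (with the single-flip sliding observation). The paper itself does not write out a separate proof but merely points to this structure in the paragraphs preceding the theorem and in Section~\ref{sf}; you have simply expanded those pointers in the intended way.
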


\subsection{Moving boundary marked points between boundary components}
\label{bdry}

Given a surface or an orbifold $S$ with at least $4$ features, we can define a group $G(S)$ as a group $G(\Sigma)$ for a quiver (or diagram) of a triangulation of $S$ without loops. The following statement shows that if we move some of the boundary marked points from one boundary component to another, we get an isomorphic group. The proof is identical to the one of~\cite[Theorem 4.5]{EAWG}.

\begin{theorem}
\label{boundaries}
Let $S_{g,b,p,o,m_b}$ be a marked surface or orbifold of genus $g$  with $b$ boundary components, $p$ punctures, $o$ orbifold points, and $m_b$ boundary marked points, where $p+b\ge 4$. Then the group $G(S_{g,b,p,o,m_b})$ does not depend on the distribution of the boundary marked points along the boundary components of the surface/orbifold (depending on $g,b,p,o,m_b$ only). 

\end{theorem}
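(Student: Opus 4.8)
The statement says that moving boundary marked points between distinct boundary components yields an isomorphic group. The natural strategy, following the cited Theorem~4.5 of~\cite{EAWG}, is to reduce the problem to a single \emph{elementary move}: transferring one boundary marked point from a boundary component $\beta$ with at least two marked points to another boundary component $\beta'$. Any rearrangement of the $m_b$ boundary marked points among the $b$ boundary components (keeping $g,b,p,o,m_b$ fixed) can be achieved by a finite composition of such elementary moves, so it suffices to exhibit, for each elementary move, an explicit isomorphism between the two groups.

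\textbf{Key steps.} First I would fix a convenient triangulation $T$ of $S=S_{g,b,p,o,m_b}$ adapted to the move: using Lemma~\ref{boundary} and Remark~\ref{all-b-2}, normalise so that the component $\beta$ losing a marked point has exactly two marked points, one of which (call it $b$) is incident to a single non-boundary arc; flipping that arc exhibits a triangle $\Delta$ with vertex $b$ whose removal (cutting off $b$) does not disturb the rest of the triangulation. Symmetrically, near the component $\beta'$ gaining a marked point one has a boundary arc that can be subdivided to introduce a new marked point, exactly as in the construction step of Lemma~\ref{3-exists-b}. The point is that the quiver $Q$ is \emph{unchanged} by this ``cut off a triangle here, glue in a triangle there'' operation — both the deleted and the inserted triangle contribute a vertex attached in the same local pattern to the rest of the quiver, so $Q$ before the move and $Q'$ after the move are isomorphic as quivers (this uses that $b$ is a boundary vertex of valence one among non-boundary arcs, so the corresponding generator interacts with the others through a single arrow, and the same holds for the new vertex on $\beta'$). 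An isomorphism of quivers immediately gives $G(Q)\cong G(Q')$ since the relations (R1)--(R4) (or (R1')--(R4')) are defined purely combinatorially from the quiver/diagram. Then invoke Theorem~\ref{invariance-th} (or Proposition~\ref{self}, Theorem~\ref{orb-th} in the self-folded / orbifold settings) to identify $G(Q)$ with $G(S)$ and $G(Q')$ with $G(S')$, where $S'=S_{g,b,p,o,m_b}$ with the relocated marked point: the group does not depend on the chosen triangulation, so these are the invariants $G(S)$ and $G(S')$ and we conclude $G(S)\cong G(S')$.

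\textbf{Main obstacle.} The delicate point is the normalisation step: one must be sure that the required reconfiguration near $\beta$ (making a boundary marked point incident to exactly one non-boundary arc, so that it sits in a single triangle that can be excised) is always reachable by loop-free flips, and that this can be done without interfering with the corresponding surgery near $\beta'$. This is precisely the content of Lemma~\ref{boundary} together with Remark~\ref{all-b-2} and Lemma~\ref{reduce-b}, which guarantee that boundary components can be brought to a standard local form by loop-free flips. The hypothesis $p+b\ge 4$ is needed exactly so that these lemmas apply (there are enough features to carry out the flips without creating loops), mirroring the hypothesis in Theorem~\ref{invariance-th}. Once the triangulations on both sides are in this standard form, the identification of the two quivers is routine bookkeeping, and the rest is an immediate appeal to the invariance theorems already proved; so the argument is genuinely ``identical to the one of~\cite[Theorem 4.5]{EAWG}'' as claimed, modulo checking that the orbifold and self-folded cases go through with the (R4)/(R4') relations, which they do since those relations, too, depend only on the combinatorics of the diagram.
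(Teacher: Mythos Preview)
Your overall strategy is correct and matches what the paper does: the paper gives no argument of its own here, simply declaring the proof ``identical to the one of~\cite[Theorem 4.5]{EAWG}'', and the EAWG argument is precisely the one you sketch --- reduce to an elementary move, exhibit triangulations of the two surfaces with isomorphic quivers, then invoke the invariance theorem.

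That said, there is a real gap in your sketch at the crucial step. You assert that after cutting a triangle near $\beta$ and inserting one near $\beta'$ the quivers are isomorphic because ``both the deleted and the inserted triangle contribute a vertex attached in the same local pattern to the rest of the quiver''. But this is not automatic. The vertex you remove is the arc $\gamma$ opposite the excised boundary point; its neighbours in $Q$ are the non-boundary sides of the \emph{other} triangle $\Delta'$ containing $\gamma$. The vertex you add near $\beta'$ has as neighbours the non-boundary sides of whatever triangle was adjacent to the subdivided boundary arc. There is no reason these two local neighbourhoods coincide unless you \emph{arrange} it. The actual mechanism (as in~\cite{EAWG}) is to first bring $\beta$ and $\beta'$ adjacent in the triangulation --- concretely, to have a single triangle $\Delta'$ one of whose sides is $\gamma$ and another of whose sides is a boundary arc on $\beta'$; then removing the point on $\beta$ turns $\gamma$ into a boundary arc, while adding a point on $\beta'$ turns that boundary side of $\Delta'$ into a non-boundary arc, and the two operations visibly swap the role of these two sides of the same triangle $\Delta'$, giving an isomorphic quiver. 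Your ``Main obstacle'' paragraph worries about loop-freeness of the normalising flips but misses this coordination issue, which is the substantive content of the argument.

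A smaller point: your description of the normalisation near $\beta$ (``exactly two marked points, one incident to a single non-boundary arc; flipping that arc\ldots'') does not match Lemma~\ref{boundary}, which applies to components with at least three marked points and produces a point incident to \emph{no} non-boundary arcs. With only two marked points on $\beta$ your proposed flip would in fact create a loop (the quadrilateral around that single arc has two coinciding opposite vertices). The elementary move for a component with exactly two marked points has to be handled by the adjacency trick above rather than by Lemma~\ref{boundary}.
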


\begin{remark}
  \label{move-4}
  Note that the assumption $p+b\ge 4$ in the theorem is required since we have not proved that the group does not depend on the triangulation otherwise. However, for unpunctured surfaces and orbifolds the group was proved to be invariant in~\cite{affine}, so we can apply Theorem~\ref{boundaries} to an unpunctured surface or orbifold $S$ as soon as $S$ has two boundary components (and more than $3$ marked points). Thus, the only surfaces to which we are not able to apply Theorem~\ref{boundaries} are once punctured surfaces with precisely two boundary components (and probably some orbifold points). 

  \end{remark}

\subsection{Surfaces with a few features}
\label{few}

The results of the paper can be applied to surfaces or orbifolds with at least $4$ features. In this section we list some remarks concerning surfaces and orbifolds with fewer features.

\subsubsection{One or two features}
If the only features are punctures, then there is no loop-free triangulation, and if the surface is unpunctured then the results of~\cite{affine} apply. Thus, the only case not covered is once punctured surface/orbifold with one boundary component.

\subsubsection{Three features}
In this case there are combinatorial types of loop-free triangulations that cannot be taken one to another by loop-free flips, see Remark~\ref{counter} for an example. For closed surfaces of genus $2$ we have verified directly that the corresponding groups are isomorphic, and we expect this to hold in general.

\subsection{Relation to mixed twist groups}
\label{quotient}

For a marked surface with non-empty boundary, Qiu~\cite[Section 2.5]{Q2} defined a {\em mixed twist group} $\mathrm{MT}$ of the surface and provided its finite presentation for any (ideal) triangulation $T$ in which every puncture is contained in a self-folded triangle. If the corresponding tagged triangulation contains no loops, the presentation of $\mathrm{MT}$ is simplified to the one shown in~\cite[Corollary 2.13]{Q2}, and our group $G(Q)$ can be obtained from $\mathrm{MT}$ as follows.

The group $\mathrm{MT}$ is generated by {\em $L$-twists} (corresponding to self-folded arcs of $T$) and {\em braid twists} (corresponding to other arcs of $T$), with relations being either {\em triangle relations} or higher braid relations. First, we set all generators of $\mathrm{MT}$ to be involutions. Next, for every self-folded triangle we take the corresponding $L$-twist $a$ and braid twist $b$, and substitute the generator $a$ with $aba$ (this now corresponds to a tagged arc in the tagged triangulation), which is equivalent to taking a subgroup of index $2^p$, where $p$ is the number of punctures. It is easy to see that higher braid relations in the presentation of $\mathrm{MT}$ become Coxeter relations (R2). The triangle relations now become cycle relations (R3) by~\cite[Lemma 2.5]{GM}, so we obtain precisely the group $G(Q)$.

Since the group $\mathrm{MT}$ is defined invariantly, we can use this correspondence to extend our results.

\subsubsection{Triangulations with all punctures contained in self-folded triangles}
\label{Q}

If all punctures in a (tagged) triangulation are contained in self-folded triangles, then the corresponding quiver $Q$ does not contain oriented cycles of length greater than $3$. For such triangulations (even with loops), we can also write down the presentation of $G(Q)$ following the rules from~\cite{affine}. This can be checked explicitly, or deduced from~\cite{Q2}. Note that such triangulations exist for surfaces with non-empty boundary only.

\subsubsection{Surfaces with boundary}

The relation to the mixed twist group provides another proof of the invariance of the group $G(Q)$ for surfaces with non-empty boundary. Moreover, due to the section above, we can define the group for any marked surface with boundary, thus dropping the requirement to have at least four features.

\subsection{Example}
\label{non-iso}

In this section, we consider the group constructed in Section~\ref{group} for the case of a once punctured annulus with one marked point on each boundary component. This is the minimal example of a punctured surface with the corresponding quiver of non-finite and non-affine type. A loop-free triangulation and the corresponding quiver $Q$ are shown in Fig.~\ref{fig-ann}.

\begin{figure}[!h]
\begin{center}
\includegraphics[width=0.6\linewidth]{./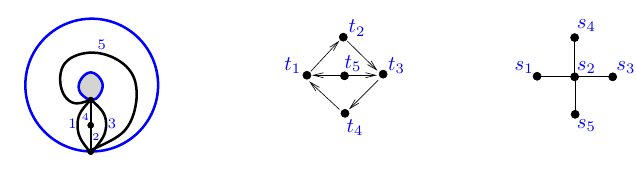}
    \caption{A loop-free triangulation of  once punctured annulus with one marked point on each boundary component (left), the corresponding quiver (middle), and standard generators of an affine Weyl group of type $\tilde D_4$ (right).}
\label{fig-ann}
\end{center}
\end{figure}

The group $G(Q)$ is then given by
$$G(Q)=\tilde G(Q)/N(Q),$$
where $\tilde G(Q)$ is a Coxeter group given by 
$$\tilde G(Q)=\langle t_1,\dots,t_5\mid t_i^2,(t_1t_2)^3,(t_1t_3)^2,(t_1t_4)^3,(t_1t_5)^3,(t_2t_3)^3,(t_2t_4)^2,(t_2t_5)^2,(t_3t_4)^3,(t_3t_5)^3,(t_4t_5)^2\rangle,$$
and $N(Q)$ is the normal closure of $(t_1t_2t_3t_4t_3t_2)^2$ in $\tilde G(Q)$, see Remark~\ref{cox}.

It is easy to check that there is a surjective homomorphism $\varphi$ of $G(Q)$ onto the affine Weyl group $W$ of type $\tilde D_4$ with generators shown in Fig.~\ref{fig-ann} (right), defined by
\begin{eqnarray*}
  t_1&\mapsto &s_1\\
  t_2&\mapsto &s_2\\
  t_3&\mapsto &s_3\\
  t_4&\mapsto &s_3s_2s_4s_2s_3\\
  t_5&\mapsto &s_4s_2s_5s_2s_4
  \end{eqnarray*}
  The verification is straightforward: one needs to check that for every defining relation for $G(Q)$ its image is trivial in $W$, which can be easily done for an affine Weyl group.

  In fact, this homomorphism is not an isomorphism: the kernel is the normal closure of the element $(t_1t_5t_3t_4t_3t_5)^2\in G(Q)$ (we use MAGMA~\cite{M} to verify this).

\end{document}